\newtheorem{theorem}{Theorem}[section]
\newtheorem{lemma}[theorem]{Lemma}
\newtheorem{corollary}[theorem]{Corollary}
\newtheorem{TCT}{A New Type of Toponogov  Comparison Theorem}
\newtheorem{MT}{Main Theorem}
\newtheorem{ST}{Sector Theorem}
\newtheorem{AMOC}{Partial Answer to Milnor's Open Conjecture}
\newtheorem{MOC}{Milnor's Open Conjecture}
\newtheorem{IL}{Gromov's Isotopy Lemma}
\newtheorem{ML}{Model Lemma}
\newtheorem{proof}{\textmd{\textit{Proof.}}}
\newtheorem{remark}[theorem]{Remark}
\newtheorem{example}[theorem]{Example}
\newtheorem{definition}[theorem]{Definition}
\newtheorem{acknowledgement}{\textmd{\textit{Acknowledgements.}}}
\newcommand{\qedd}{\hfill \Box}
\newcommand{\ve}{\varepsilon}
\newcommand{\lra}{\longrightarrow}
\newcommand{\wt}{\widetilde}
\newcommand{\wh}{\widehat}
\newcommand{\ol}{\overline}
\newcommand{\N}{\ensuremath{\mathbb{N}}}
\newcommand{\R}{\ensuremath{\mathbb{R}}}
\newcommand{\Sph}{\ensuremath{\mathbb{S}}}
\newcommand{\cD}{\ensuremath{\mathcal{D}}}
\newcommand{\cL}{\ensuremath{\mathcal{L}}}
\newcommand{\cN}{\ensuremath{\mathcal{N}}}
\newcommand{\cO}{\ensuremath{\mathcal{O}}}
\newcommand{\cU}{\ensuremath{\mathcal{U}}}
\def\Cut{\mathop{\mathrm{Cut}}\nolimits}
\title
{
Total Curvatures of Model Surfaces Control\\ 
Topology\hspace{-0.5mm} of Complete Open Manifolds with\\ 
Radial Curvature Bounded Below.\,II\footnote{
Mathematics Subject Classification (2000)\,:\,53C20, 53C21.}
\footnote{
Keywords\,:\,Toponogov's comparison theorem, 
cut locus, 
geodesic, 
radial curvature, 
total curvature}
}
\author{Kei KONDO $\cdot$ Minoru TANAKA}
\date{}
\begin{document}%%%%%%%%%%%%%%%%%%%%%%%%%%%%%%%%%%%%%%%%%%%%%%%%%%%
\maketitle

\begin{abstract}
We prove, as our main theorem, 
the finiteness of topological type of 
a complete open Riemannian manifold $M$ with a base point $p \in M$ 
whose radial curvature at $p$ is bounded from below by that of a non-compact model 
surface of revolution $\wt{M}$ which 
admits a finite total curvature and has no pair of cut points in a sector. 
Here a sector is, by definition, a domain cut off by two meridians emanating 
from the base point $\tilde{p} \in \wt{M}$. 
Notice that our model $\wt{M}$ does not always satisfy the diameter growth condition introduced 
by Abresch and Gromoll. 
In order to prove the main theorem, 
we need a new type of the Toponogov  comparison theorem. 
As an application of the main theorem, 
we present a partial answer 
to Milnor's open conjecture on the fundamental group of complete open manifolds.
\end{abstract}

\section{Introduction}\label{sec:int}

Most comparison theorems which appear in differential geometry are originated from the 
Sturm comparison theorem (see \cite{St}, or \cite{Hr}). 
Bonnet \cite{Bo} would be the first researcher who applied the Sturm comparison theorem 
to differential geometry. 
In 1855, he proved that
the diameter of a compact surface does not exceed $\pi / \sqrt{\Lambda}$, 
if the Gaussian curvature of the surface is greater than a positive 
constant $\Lambda$.\par
In 1932, Schoenberg \cite{Sc} formulated the Sturm comparison theorem by 
using the sectional curvatures of Riemannian manifolds, 
and generalized Bonnet's theorem to any Riemannian manifolds. 
Therefore, it took {\em more than 70 years} to generalize Bonnet's theorem above, 
though von Mangoldt \cite{Ma} also applied the Sturm comparison theorem to 
differential geometry in 1881.\par 
In 1951, 
Rauch succeeded to compare the lengths of Jacobi fields 
along geodesics in differential Riemannian manifolds, 
and proved a well-known theorem, which is called the Rauch comparison theorem. 
Thus its special case is Schoenberg's theorem above. 
As an application of the Rauch comparison theorem, 
he proved a (topological) sphere theorem in \cite{R}. 
After this work, various kinds of sphere theorems have been 
proved (cf. \cite{K1}, \cite{K2}, \cite{B}, \cite{GS}, \cite{Sh2}, and so on).\par   
In 1959, Toponogov \cite{To1},\,\cite{To2} generalized the Rauch comparison theorem 
as a global version of the Rauch comparison theorem. 
He compared the angles of geodesic triangles in a Riemannian manifold 
and those in a complete $2$-dimensional Riemannian manifold 
of constant Gaussian curvature, 
which is well known as the Alexandrov--Toponogov comparison theorem 
(abbreviated to just the ``\,Toponogov\,'' comparison theorem). 
The Toponogov comparison theorem is now a very powerful tool for investigating global 
structures of Riemannian manifolds (cf. \cite{CG}, \cite{GS}, \cite{G1}, and so on).\par 
Some researchers tried to generalize the Toponogov comparison theorem. 
In 1985, Abresch \cite{A} generalized the Toponogov comparison theorem 
by a non-compact model surface of revolution with non-positive Gaussian curvature. 
He would be the first researcher who succeeded to generalize it in a rather satisfactory form, 
that is, in the {\em radial curvature geometry}.\par 

\bigskip

We will now introduce the radial curvature 
geometry for pointed complete open Riemannian manifolds\,: 
We call a complete open $2$-dimensional Riemannian manifold $(\wt{M}, \tilde{p})$ 
with a base point $\tilde{p} \in \wt{M}$ a 
{\em non-compact model surface of revolution} 
if its Riemannian metric $d\tilde{s}^2$ is expressed 
in terms of geodesic polar coordinates around $\tilde{p}$ as 
\begin{equation}\label{polar}
d\tilde{s}^2 = dt^2 + f(t)^2d \theta^2, \quad (t,\theta) \in (0,\infty) \times {\Sph_{\tilde{p}}^1}_. 
\end{equation}
Here, $f : (0, \infty) \lra \R$ is a positive smooth function which is extensible to a smooth 
odd function around $0$, 
and $\Sph^{1}_{\tilde{p}} := \{ v \in T_{\tilde{p}} \wt{M} \ | \ \| v \| = 1 \}$. 
The function $G \circ \tilde{\gamma} : [0,\infty) \lra \R$ is called the 
{\em radial curvature function} of $(\wt{M}, \tilde{p})$, 
where we denote by $G$ the Gaussian curvature of $\wt{M}$, 
and by $\tilde{\gamma}$ any meridian emanating from $\tilde{p} = \tilde{\gamma} (0)$. 
Remark that $f$ satisfies the differential equation 
\[
f''(t) + G (\tilde{\gamma}(t)) f(t) = 0
\]
with initial conditions $f(0) = 0$ and $f'(0) = 1$. 
The $n$-dimensional model surfaces of revolution are defined similarly, 
and they are completely classified in \cite{KK}.\par 
Let $(M,p)$ be a complete open $n$-dimensional Riemannian manifold with a base point $p \in M$.
We say that $(M, p)$ has 
{\em 
radial curvature at the base point $p$ bounded 
from below by that of 
a non-compact model surface of revolution $(\wt{M}, \tilde{p})$} 
if, along every unit speed minimal geodesic $\gamma: [0,a) \lra M$ 
emanating from $p = \gamma (0)$, 
its sectional curvature $K_M$ satisfies
\[
K_M(\sigma_{t}) \ge G (\tilde{\gamma}(t))
\]
for all $t \in [0, a)$ and all $2$-dimensional linear space $\sigma_{t}$ spanned by $\gamma'(t)$ 
and a tangent vector to $M$ at $\gamma(t)$.\par 
For example, if the Riemannian metric of $\wt{M}$ is 
$dt^2 + t^{2}d \theta^2$, or $dt^2 + \sinh^{2} t\,d \theta^2$, then 
$G (\tilde{\gamma}(t)) = 0$, or $G (\tilde{\gamma}(t)) = -1$, respectively. 
Furthermore, the radial curvature may change signs. 
Moreover, we can employ a model surface of revolution, as a comparison surface, 
satisfying $\lim_{t \to \infty} G (\tilde{\gamma}(t)) = -\infty$. 
Thus, {\em it is very natural as generalization of conventional comparison geometry 
to make use of a model surface of revolution instead of a complete simply connected surface 
of constant Gaussian curvature}. 

\bigskip

In 2003, the Toponogov comparison theorem was generalized 
by Itokawa, Machigashira, and Shiohama, by using a {\em 
von Mangoldt surface of revolution} as a $(\wt{M}, \tilde{p})$, 
in a very satisfactory form, i.e., their theorem contains 
the original Toponogov comparison theorem as a corollary (see \cite[Theorem 1.3]{IMS}). 
Here, a von Mangoldt surface of revolution is, 
by definition, a model surface of revolution 
whose radial curvature function is non-increasing on $[0, \infty)$. 
Paraboloids and $2$-sheeted hyperboloids are typical examples of a von Mangoldt surface of revolution. An untypical example of a von Mangoldt surface of revolution is found 
in \cite[Example 1.2]{KT1}, where its radial curvature function $G (\tilde{\gamma}(t))$ changes 
signs on $[0, \infty)$. 
We refer to \cite{T1} for other examples of a von Mangoldt surface of revolution. 
Thus, a von Mangoldt surface of revolution is a {\em very common} model 
as a reference space. 
We refer \cite{KO} and \cite{KT1} for applications of 
Itokawa--Machigashira--Shiohama' comparison theorem.

\bigskip

Poincar\'e \cite{Po} first introduced the notion of the cut locus for surfaces in 1905. 
He claimed that {\em the endpoints of the cut locus are cusps of the conjugate locus 
turned to the starting point}. 
We have explicitly determined such structures of 
model surfaces of revolution 
whose Gaussian curvature is monotonic along a subarc of a meridian 
(see \cite{T1}, \cite{GMST}, \cite{SiT1}, and \cite{SiT2}). 
For example, the cut locus $\Cut(\tilde{z})$ to each point
$\tilde{z} \in \wt{M} \setminus \{ \tilde{p} \}$
of a non-compact von Mangoldt surface of revolution $(\wt{M}, \tilde{p})$ is either an empty set, 
or a ray properly contained in the meridian $\theta^{-1} (\theta (\tilde{z}) + \pi)$ laying opposite to $\tilde{z}$, and that the endpoint of $\Cut(\tilde{z})$ is the first conjugate point to $\tilde{z}$ 
along the minimal geodesic from $\tilde{z}$ sitting in 
$ \theta^{-1} (\theta (\tilde{z})) \cup \theta^{-1} (\theta (\tilde{z}) + \pi)$ (see \cite[Main Theorem]{T1}). 
In particular, {\em any non-compact von Mangoldt surface of revolution has no pair of cut points 
in the sector $\wt{V} (\pi)$}, 
where we define a sector 
\[ 
\wt{V} (\delta) : = \left\{ \tilde{x} \in \wt{M} \, | \, 0 < \theta(\tilde{x}) < \delta \right\}
\]
for each constant number $\delta >0$.\par
However, we emphasize that {\em the cut locus on  a model 
surface of revolution need not be connected}. 
A model surface of revolution with a disconnected cut locus has been constructed 
in \cite[Section 2]{T2}. 
We also note that Gluck and Singer \cite{GlSi} constructed a smooth, 
but non-analytic surface of revolution embedded in $\R^{3}$ such that 
the cut locus of a point on the surface admits a branch point with infinite order, 
even one of strictly positive Gaussian curvature, 
so that its cut locus has also infinitely many endpoints.\par
Thus, it is not difficult to establish the Toponogov comparison theorem 
for a model surface of revolution admitting 
a very simple structure of the cut locus at each point as seen in \cite[Theorem 1.3]{IMS}. 
However, it is very difficult to establish the Toponogov comparison theorem 
for an arbitrary model surface of revolution. 
The cause of the difficulty lies in the complexity of cut loci of model 
surfaces of revolution as stated above. 
Therefore, {\em the structure of cut loci
of model surfaces of revolution allows a generalization of the Toponogov
comparison theorem in radial curvature geometry}.

\bigskip

In this article, we are concerned with a generalization of 
the Toponogov comparison theorem to the radial curvature geometry. 
That is, we need a new type of the Toponogov  comparison theorem 
in order to prove our main theorem\,:

\begin{TCT} \hspace{-3.2mm} {\rm (Theorem \ref{thm4.8} in Section \ref{sec:TCT})} \par
Let $(M,p)$ be a complete open Riemannian $n$-manifold $M$ 
whose radial curvature at the base point $p$ is bounded from below by
that of a non-compact model surface of revolution $(\wt{M}, \tilde{p})$. 
Assume that $\wt{M}$ admits a sector $\wt{V}(\delta_{0})$ 
for some $\delta_{0} \in (0, \pi)$ which has no pair of cut points. 
Then, for every geodesic triangle $\triangle(pxy)$ in $M$ with $\angle (xpy) < \delta_{0}$,
there exists a geodesic triangle 
$\wt{\triangle} (pxy) :=\triangle(\tilde{p}\tilde{x}\tilde{y})$ in $\wt{V}(\delta_{0})$ such that
\begin{equation}\label{TCT-length}
d(\tilde{p},\tilde{x})=d(p,x), \quad d(\tilde{p},\tilde{y})=d(p,y), \quad d(\tilde{x},\tilde{y})=d(x,y) 
\end{equation}
and that
\begin{equation}\label{TCT-angle}
\angle (xpy) \ge \angle (\tilde{x}\tilde{p}\tilde{y}), \quad  
\angle (pxy) \ge \angle (\tilde{p}\tilde{x}\tilde{y}), \quad
\angle (pyx) \ge \angle (\tilde{p}\tilde{y}\tilde{x}). 
\end{equation}
Here we denote by $d(\, \cdot \,, \, \cdot \, )$ the distance function 
induced from the Riemannian structure of $M$, or $\wt{M}$, 
and by $\angle(pxy)$ the angle between the minimal geodesics from $x$ to $p$ and $y$ 
forming the triangle $\triangle(pxy)$.
\end{TCT}

\medskip\noindent
The assumption on $\wt{V}(\delta_{0})$ in our comparison theorem 
is automatically satisfied, if we employ a von Mangoldt surface of revolution, 
or a Cartan--Hadamard surface of revolution (i.e., $G$ is non-positive on $\wt{M}$) 
as a $(\wt{M}, \tilde{p})$. 
Therefore, our comparison theorem contains 
Itokawa--Machigashira--Shiohama' comparison theorem above as a corollary. 
The proof of our comparison theorem has completely different techniques from used 
in \cite[Chapter 2]{CE}, \cite[Chapter IV]{Sa}, \cite{A}, 
and \cite{IMS} (see Section \ref{sec:TCT} for the proof). 

\bigskip

Before stating our main theorem, we will mention a few related results for our theorem
to clarify our aim\,: Abresch and Gromoll \cite{AG} proved the finiteness of topological type of 
a complete open $n$-dimensional Riemannian manifold $X$ with non-negative Ricci curvature 
outside the open distance $t_{0}$-ball around $p \in X$ for some constant $t_{0} > 0$, 
{\em however, admitting sectional curvature bounded from below 
by some negative constant everywhere, 
and furthermore admitting diameter growth of small order $o(t^{1/ n})$ for some $p \in X$}. 
Here,\,``$X$ has finite topological type.''\,means that 
$X$ is homeomorphic to the interior of a compact manifold with boundary. 
Although their result and technique have influenced many articles (cf.\,Sormani's study \cite{So}), 
it looks very {\em restrictive to assume that a complete open 
Riemannian $n$-manifold admits diameter growth of small order $o(t^{1/ n})$}, 
if once you live in the world of the radial curvature geometry. 
The next example shows that the diameter growth condition is too restrictive.

\begin{example}\label{exa1.1}
Let $(\wt{M}, \tilde{p})$ be a non-negatively curved non-compact model surface of revolution with 
its metric (\ref{polar}), and admitting diameter growth $\cD(t, \tilde{p})=o(t^{1/ 2})$. 
We denote by $\cL(t, \tilde{p})$ the length of boundary of the open distance $t$-ball around $\tilde{p}$. 
Since $\cD(t, \tilde{p}) = o(t^{1/ 2})$ and 
\[
\lim_{t \to \infty} \frac{\cD(t, \tilde{p})}{\cL(t, \tilde{p})}
\] 
is positive by \cite[Lemma 7.3.3]{SST}, 
there exist positive numbers $\ve_{1}$ and $\ve_{2}$ such that 
\begin{equation}\label{exa1.1-1}
{
2 \pi f(t) = \cL(t, \tilde{p}) = \frac{\cL(t, \tilde{p})}{\cD(t, \tilde{p})} \cdot \cD(t, \tilde{p}) \le \ve_{1} \cD(t, \tilde{p}) \le \ve_{2} \sqrt{t}
}_. 
\end{equation}
By (\ref{exa1.1-1}), $\wt{M}$ satisfies
\begin{equation}\label{exa1.1-2}
{
\int_{1}^{\infty} \frac{1}{f(t)^2}\,dt = \infty
}_.
\end{equation}
Let $(M, p)$ be a complete open $n$-dimensional Riemannian 
manifold $M$ whose radial curvature at 
the base point $p$ is bounded from below by that of $\wt{M}$. 
Then (\ref{exa1.1-2}) controls the topology of $(M, p)$, that is, 
it follows from \cite[Theorem 1.2]{ST2} that 
{\em $(M, p)$ is isometric to the $n$-dimensional model $\wt{M}^{n}$ satisfying $(\ref{exa1.1-2})$}. 
\end{example}

\medskip

Authors \cite{KT1} have recently reached stronger conclusion 
than Abresch and Gromoll' result above, 
in which diameter growth condition is replaced by an assumption 
on total curvatures of model surfaces\,: 

\begin{theorem}{\rm (\cite[Corollary to Main Theorem]{KT1})}\label{KT}
Let $(M, p)$ be a complete open Riemannian $n$-manifold $M$ whose radial curvature at 
the base point $p \in M$ is bounded from below by that of a non-compact von Mangoldt surface of 
revolution $(\wt{M}, \tilde{p})$. 
If $\wt{M}$ admits $c(\wt{M}) > \pi$, 
then $M$ has finite topological type, and the isometry group of $M$ is compact. 
Here $c(\wt{M})$ denotes the total curvature of $\wt{M}$.
\end{theorem}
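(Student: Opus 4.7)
The plan is to combine the new Toponogov Comparison Theorem (TCT) with the total curvature hypothesis $c(\wt{M})>\pi$ to force every critical point of $d(p,\cdot)$ to lie in a compact set; Gromov's isotopy lemma will then yield finite topological type. The von Mangoldt assumption supplies the sector hypothesis needed to apply the TCT: by the Main Theorem of \cite{T1} recalled in the introduction, $\wt{V}(\pi)$ contains no pair of cut points, so the TCT applies in every sub-sector $\wt{V}(\delta_{0})$ with $\delta_{0}<\pi$.

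The main obstacle is the following model estimate, where the hypothesis $c(\wt{M})>\pi$ is used in an essential way. Writing the metric of $\wt{M}$ as in (\ref{polar}), one has $c(\wt{M})=2\pi(1-\lim_{t\to\infty}f'(t))$, so the hypothesis forces $\lim_{t\to\infty}f'(t)<1/2$. Using this bound together with the Clairaut first integral along geodesics of $\wt{M}$ and the monotonicity of $f'$ inherited from the non-increasing radial curvature $G=-f''/f$, I would prove: there exist $r_{0}>0$ and $\delta_{0}\in(0,\pi)$ such that every geodesic triangle $\wt{\triangle}(\tilde{p}\tilde{x}\tilde{y})$ in $\wt{V}(\delta_{0})$ with $d(\tilde{p},\tilde{x})\ge r_{0}$ and $d(\tilde{p},\tilde{y})$ sufficiently large satisfies $\angle(\tilde{p}\tilde{x}\tilde{y})>\pi/2$. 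Establishing this strict inequality uniformly in the sector direction of $\tilde{y}$ is the delicate step, and is where $c(\wt{M})>\pi$ is unavoidable.

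Given the model estimate I would argue by contradiction. Suppose $\{x_{n}\}\subset M$ were a sequence of critical points of $d(p,\cdot)$ with $d(p,x_{n})\to\infty$. After passing to a subsequence the initial directions at $p$ of minimizing geodesics $px_{n}$ converge to some unit vector $v\in T_{p}M$, and the standard limit argument produces a ray $\sigma:[0,\infty)\to M$ from $p$ with $\sigma'(0)=v$. For $n$ large one has $\angle(x_{n}\,p\,\sigma(t))<\delta_{0}$ for every $t>0$. Fixing such $n$ and choosing $t$ large, the TCT provides a comparison triangle $\wt{\triangle}(\tilde{p}\tilde{x}_{n}\tilde{y})\subset\wt{V}(\delta_{0})$ with the same side lengths, and by the previous paragraph $\angle(p\,x_{n}\,\sigma(t))\ge\angle(\tilde{p}\tilde{x}_{n}\tilde{y})>\pi/2$. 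Hence the tangent at $x_{n}$ to the minimal geodesic from $x_{n}$ to $\sigma(t)$ is a non-critical direction for $d(p,\cdot)$ at $x_{n}$, contradicting criticality. Therefore $d(p,\cdot)$ has no critical points outside some compact ball $\overline{B_{r_{0}}(p)}$, and Gromov's isotopy lemma identifies $M\setminus B_{r_{0}}(p)$ with $\partial B_{r_{0}}(p)\times[0,\infty)$, proving finite topological type.

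For the compactness of $\mathrm{Isom}(M)$, Myers--Steenrod makes the isometry group a Lie group, so it suffices to show that its orbit through $p$ is bounded. For any $\phi\in\mathrm{Isom}(M)$ the pair $(M,\phi(p))$ satisfies the same radial curvature bound below by $(\wt{M},\tilde{p})$, because $\phi$ preserves sectional curvatures and sends minimal geodesics from $p$ to minimal geodesics from $\phi(p)$. Applying the preceding paragraph with $\phi(p)$ in place of $p$ confines the critical set of $d(\phi(p),\cdot)$ to $\overline{B_{r_{0}}(\phi(p))}$. Since $M$ has a unique end beyond $\overline{B_{r_{0}}(p)}$ by the previous paragraph, a comparison of the two critical balls based at $p$ and at $\phi(p)$---along the lines of \cite{KT1}---forces $d(p,\phi(p))$ to remain bounded uniformly in $\phi$, so $\mathrm{Isom}(M)$ is compact.
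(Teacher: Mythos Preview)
This theorem is quoted from \cite{KT1} and is not proved in the present paper, so there is no proof here to compare against directly. That said, the paper does prove the closely related Main Theorem (Theorem~\ref{thm2.1}), whose argument exposes a genuine gap in your criticality step.

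You apply the TCT to the triangle with edges the chosen minimal geodesic $px_n$, the ray segment $p\sigma(t)$, and a minimal segment $x_n\sigma(t)$, and conclude that the angle at $x_n$ in \emph{that} triangle exceeds $\pi/2$. But criticality of $x_n$ says only that for the direction $w$ toward $\sigma(t)$ there exists \emph{some} minimal geodesic $\tau$ from $x_n$ to $p$ with $\angle(w,\tau'(0))\le\pi/2$; $\tau$ need not be the reversal of your chosen $px_n$. To obtain a contradiction you must bound the angle at $x_n$ in the triangle whose $p$-edge is $\tau$, and for that triangle you have no control on the angle at $p$, so the hypothesis $\angle(xpy)<\delta_0$ of the TCT may fail. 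Thus showing $\angle(p\,x_n\,\sigma(t))>\pi/2$ for one triangle does not make $w$ a ``non-critical direction''.

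The proof of Theorem~\ref{thm2.1} handles exactly this point: once the comparison triangle $\wt{\triangle}(pq_i\gamma(t_i))\subset\wt{V}(\delta_0)$ has been produced from the triangle with small angle at $p$, Lemma~\ref{lem4.6} (the Essential Lemma) applies to \emph{any} triangle in $M$ with the same three side lengths---in particular to the one whose $p$-edge is the geodesic $\sigma_i$ supplied by criticality---with no angle hypothesis at $p$. This yields $\angle(\tilde{p}\tilde{q}_i\tilde{\gamma}(t_i))\le\pi/2$, and the contradiction comes from Gauss--Bonnet on the shrinking comparison triangles (via Lemma~\ref{lem3.3}), not from a direct model angle estimate. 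Your ``model estimate'' under $c(\wt{M})>\pi$ may well be provable, but you have not proved it, and even granting it you still need the two-triangle device above. Finally, your isometry-group paragraph is only a gesture: the uniform bound on $d(p,\phi(p))$ is the whole content, and you simply defer it to \cite{KT1}.
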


\medskip\noindent
The assumption $c(\wt{M}) > \pi$, of which one may complain being too much strong, 
has been assumed in order to generalize Shiohama's result (\cite[Main Theorem]{Sh1}) 
in geometry of complete open surfaces 
to any dimensional complete open Riemannian manifolds (see \cite[Main Theorem]{KT1}). 
Therefore, our main purpose of this article is to show 
the finiteness of topological type of a complete open Riemannian manifold 
with a wider class of metrics than those described in Theorem \ref{KT}. 

\bigskip

Our main theorem is the following\,:

\medskip

\begin{MT} \hspace{-3.2mm} {\rm (Theorem \ref{thm2.1} in Section \ref{PMT})} \par 
Let $(M,p)$ be a complete open Riemannian $n$-manifold $M$ 
whose radial curvature at the base point $p$ is bounded from below by
that of a non-compact model surface of revolution $(\wt{M}, \tilde{p})$. 
If $\wt{M}$ admits $c(\wt{M}) > - \infty$ and has no pair of cut points 
in $\wt{V} (\delta_{0})$ for some $\delta_{0} \in (0, \pi)$, 
then $M$ has finite topological type. 
\end{MT}

\bigskip\noindent
Remark that $\wt{M}$ admits $c(\wt{M}) > - \infty$ if and only if  
\[
\int_{\wt{M}} |G|\,d\wt{M} < \infty
\]
(see \cite[Definition 2.1.3]{SST} and the paragraph after the definition).\par 
In Main Theorem, 
the assumption on the existence of $\wt{V} (\delta_{0})$ is necessary 
in order to establish a new type of Toponogov  comparison theorem 
as stated above. 
Notice that the model surface of revolution $\wt{M}$ in our 
Main Theorem cannot always be replaced by a Cartan--Hadamard surface of revolution with 
finite total curvature which bounds the radial curvature of $M$ from below.\par
Next, we will mention significance of finite total curvature in Main Theorem. 
Consider a non-compact model surface of revolution $(\wt{M}, \tilde{p})$ 
which satisfies one of the following conditions for some constant $R_{0} > 0$\,: 

\begin{enumerate}[{\bf (VM)}]
\item
The radial curvature function is {\em non-increasing on} $[R_{0}, \infty)$.
\end{enumerate} 

\begin{enumerate}[{\bf (CH)}]
\item
The radial curvature function is {\em non-positive on} $[R_{0}, \infty)$.
\end{enumerate}

\medskip

Then, the following theorem clarifies real significance 
of finite total curvature and also the validity of our Main Theorem\,:

\medskip

\begin{ST} \hspace{-3.2mm} {\rm (Theorem \ref{thm3.6} in Section \ref{sec:cut})} \par 
Let $(\wt{M}, \tilde{p})$ be a non-compact model surface of revolution satisfying the (VM), 
or the (CH) for some $R_{0} > 0$. 
If $\wt{M}$ admits $c(\wt{M}) > - \infty$, then there exists a number $\delta_{0} \in (0, \pi)$ 
such that there is no pair of cut points in $\wt{V} ( \delta_{0} )$. 
\end{ST}

\medskip\noindent
From the radial curvature geometry's standpoint, 
we therefore feel that our Main Theorem has more natural assumption than that on 
Abresch and Gromoll' result above. 
Another related result for Main Theorem is Sormani's \cite{So}. 
She has proved that if a complete open $n$-dimensional Riemannian manifold 
with non-negative Ricci curvature everywhere admits some diameter growth condition, 
then the manifold has a finitely generated fundamental group. 
However, by the same reason above, 
the diameter growth condition on her result also looks very restrictive. 

\bigskip

As an application of Main Theorem and Sector Theorem, 
we will present a partial answer to Milnor's open conjecture. 
The conjecture is stated as follows\,:

\begin{MOC} \hspace{-1.5mm}{\rm (see the line right after \cite[Theorem 1]{Mi})} \par
A complete open Riemannian manifold with non-negative Ricci curvature everywhere 
must have a finitely generated fundamental group. 
\end{MOC}

Then, the assumption on the existence of a non-compact model surface of revolution is {\em natural}. 
In fact, any complete open $n$-dimensional Riemannian manifold has a non-compact 
model surface of revolution\,:

\begin{ML} \hspace{-3.2mm} {\rm (Lemma \ref{lem5.1} in Section \ref{sec:MOC})} \par
Let $M$ be a complete open Riemannian $n$-manifold 
and $p \in M$ {\bf any} fixed point. 
Then, there exists a locally Lipschitz function $G(t)$ on $[0, \infty)$ 
such that the radial curvature of $M$ at $p$ is bounded from below by 
that of the non-compact model surface of revolution with radial curvature function $G(t)$. 
\end{ML}

\medskip\noindent
Now, let $G(t)$ be the Lipschitz function in Model Lemma, and set 
\[
G^{*}(t) := \min \left\{ 0, G(t) \right\}_.
\]
Consider a non-compact model surface of revolution $(M^{*}, p^{*})$ 
with its metric 
\begin{equation}\label{1.1}
g^{*} = dt^2 +  m(t)^2d \theta^2, \quad (t,\theta) \in (0,\infty) \times \Sph_{p^{*}}^1
\end{equation}
satisfying the differential equation
\[
m''(t) + G^{*} (t) m(t) = 0
\]
with initial conditions $m(0) = 0$ and $m'(0) = 1$. 
Notice that the metric (\ref{1.1}) is {\em not always differentiable around} 
the base point $p^{*} \in M^{*}$. 
Then, it follows from Model Lemma, Sector Theorem, and Main Theorem that 
we have the following partial answer to Milnor's open conjecture\,:

\medskip

\begin{AMOC} \hspace{-1.5mm}{\rm (Theorem \ref{thm5.3} in Section \ref{sec:MOC})} \par
Let $M$ be a complete open Riemannian $n$-manifold, $p \in M$ {\bf any} fixed point, 
and $(M^{*}, p^{*})$ a comparison model surface of revolution, constructed from $(M, p)$, 
with its metric (\ref{1.1}). 
If $G^{*} (t)$ satisfies 
\[
\int^{\infty}_{0} \left( - t \cdot G^{*} (t) \right) dt < \infty, 
\]
then the total curvature $c(M^{*})$ is finite. 
In particular, then $M$ has a finitely generated fundamental group. 
\end{AMOC}

\bigskip

Another approach to Milnor's open conjecture has been done by Wilking \cite{W}. 
He tried to prove the conjecture for manifolds with abelian fundamental groups, i.e., he proved that\,: 
Let $X$ be a complete $n$-dimensional Riemannian manifold $X$ 
with non-negative Ricci curvature everywhere, and $\wh{X}$ the universal covering space of $X$. 
Then, if $I(\wh{X}) / I_{0}(\wh{X})$ is finitely generated, 
the fundamental group of $X$ is too. 
Here, $I_{0}(\wh{X})$ denotes the identity component 
of the isometry group $I(\wh{X})$ of $\wh{X}$. 

\bigskip

The organization of this article is as follows. 
In Section \ref{PMT}, this article reaches the climax, that is, 
we prove Main Theorem (Theorem \ref{thm2.1}) 
by applying some results in Sections \ref{sec:cut} and \ref{sec:TCT}. 
In Section \ref{sec:cut}, we investigate the relationship between 
a non-compact model surface of revolution $(\wt{M}, \tilde{p})$ 
with its finite total curvature and its cut locus. 
As a main theorem in Section \ref{sec:cut}, 
we finally prove Sector Theorem (Theorem \ref{thm3.6}) using results in Section \ref{sec:cut}. 
In Section \ref{sec:TCT}, 
we establish a new type of 
the Toponogov comparison theorem (Theorem \ref{thm4.8}). 
The key tools of the proof are our Alexandrov convexity (Lemma \ref{lem4.3}) 
and Essential Lemma (Lemma \ref{lem4.6}). 
In Section \ref{sec:MOC}, we prove Model Lemma (Lemma \ref{lem5.1}). 
After recalling some differential inequality (Lemma \ref{DIL}),  
we prove a partial answer (Theorem \ref{thm5.3}) to Milnor's open conjecture, 
using Model Lemma, the differential inequality, Sector Theorem, and Main Theorem.
Finally, we prove a corollary (Corollary \ref{cor5.4}) 
to Main Theorem, which is another partial answer to Milnor's open conjecture.

\bigskip

In the following sections, 
all geodesics will be normalized, unless otherwise stated. 

\begin{acknowledgement}
The first named author would like to express to Professor Karsten Grove 
my deepest gratitude for his encouragement, and his interest in our work. 
We are very grateful to Professors Takashi Shioya and Shin-ichi Ohta 
for their helpful comments on the first draft of this article. 
Finally, we also thank the referee for careful reading of the manuscript, valuable suggestions, 
and helpful comments on the manuscript, which, no doubt, have improved the presentation of 
this article.
\end{acknowledgement}

\section{Proof of Main Theorem}\label{PMT}

In 1977, Grove and Shiohama \cite{GS} first introduced the notion of critical point 
of distance functions to prove their famous theorem, which is called the diameter sphere theorem. 
The definition of the critical point of distance functions is given as follows\,:

\begin{definition}
Let $M$ be a complete Riemannian manifold. 
For any fixed point $p \in M$, 
a point $q \in M \setminus \{ p \}$ is called 
a {\it critical point of} $d(p, \, \cdot \, )$ (or {\em critical point for} $p$) if, 
for every nonzero tangent vector $v \in T_{q} M$, 
we find a minimal geodesic $\gamma$ emanating from $q$ to $p$
satisfying 
\[
{\angle(v, \gamma'(0)) \le \frac{\pi}{2}}_.
\]
Here, we denote by $\angle(v, \gamma'(0))$ 
the angle between two vectors $v$ and $\gamma'(0)$ in $T_{q} M$.
\end{definition}
In 1981, Gromov \cite{G1} refined the technique given by Grove and Shiohama in \cite{GS} 
in order to estimate an upper bound 
on the sum of the Betti numbers over any fixed field for compact (connected) 
Riemannian manifolds with non-negative sectional curvature everywhere\,:

\begin{IL}
Let $M$ be a complete Riemannian manifold. 
If $0 < R_1 < R_2 \le \infty$, 
and if $\overline{B_{R_2}(p)} \setminus B_{R_1}(p)$ has no critical point for $p \in M$, 
then $\overline{B_{R_2}(p)} \setminus B_{R_1}(p)$ is homeomorphic to 
$\partial B_{R_1}(p) \times [R_1, R_2 ]$. 
Here, $B_{R_{i}}(p)$, $i =1, 2$, 
are the open distance $R_{i}$-balls around $p$.
\end{IL}

\medskip

From now on, 
let $(\wt{M}, \tilde{p})$ be a non-compact model surface of revolution, 
and let $(M,p)$ be a complete open Riemannian $n$-manifold $M$ 
whose radial curvature at the base point $p \in M$ is bounded from below by
that of the $(\wt{M}, \tilde{p})$. 
Admitting Lemmas \ref{lem3.3}, \ref{lem4.6}, and Theorem \ref{thm4.8} in Sections $3$ and $4$, 
we can prove that\,:

\begin{theorem}\label{thm2.1}
If $\wt{M}$ admits $c(\wt{M}) > - \infty$ and has no pair of cut points 
in $\wt{V} (\delta_{0})$ for some $\delta_{0} \in (0, \pi)$, 
then $M$ has finite topological type. 
\end{theorem}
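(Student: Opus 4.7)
The plan is to apply Gromov's Isotopy Lemma: it suffices to prove that the set of critical points of $d(p,\cdot\,)$ is bounded in $M$, since then the complement of a suitable ball around $p$ is homeomorphic to a cylinder $\partial B_{R}(p)\times[R,\infty)$ and $M$ has finite topological type. Assume, for contradiction, that there is a sequence $\{q_{k}\}$ of critical points for $p$ with $r_{k}:=d(p,q_{k})\to\infty$. For each $k$ choose a minimal geodesic $\gamma_{k}$ from $p$ to $q_{k}$. By compactness of $\Sph^{n-1}_{p}\subset T_{p}M$, after passing to a subsequence I may assume that $\gamma_{k}'(0)$ converges, so that the angle at $p$ formed by $\gamma_{i}$ and $\gamma_{j}$ becomes arbitrarily small, in particular smaller than $\delta_{0}$, for large $i,j$.

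The next step is to build a geodesic triangle $\triangle(p\,q_{i}\,q_{j})$ whose base angles at $q_{i}$ and $q_{j}$ are both at most $\pi/2$. Let $\sigma$ be a minimal geodesic from $q_{i}$ to $q_{j}$. Applying the criticality of $q_{i}$ to $\sigma'(0)\in T_{q_{i}}M$ gives a minimal geodesic $\tau_{i}$ from $q_{i}$ to $p$ with $\angle(\tau_{i}'(0),\sigma'(0))\le \pi/2$; symmetrically the criticality of $q_{j}$ produces $\tau_{j}$ with $\angle(\tau_{j}'(0),-\sigma'(\mathrm{length}(\sigma)))\le\pi/2$. For the triangle with sides $\tau_{i}^{-1}$, $\sigma$, $\tau_{j}$ the base angles are then $\le\pi/2$ by construction. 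The non-trivial coordination step is to ensure that the apex angle $\angle(q_{i}\,p\,q_{j})$ of this triangle remains below $\delta_{0}$; this is where the Essential Lemma (Lemma \ref{lem4.6}) enters, allowing us to replace the original $\gamma_{i},\gamma_{j}$ by the critical geodesics $\tau_{i}^{-1},\tau_{j}^{-1}$ without losing the smallness of the angle at $p$. The new Toponogov Comparison Theorem (Theorem \ref{thm4.8}) then delivers a comparison triangle $\triangle(\tilde{p}\tilde{q}_{i}\tilde{q}_{j})\subset \wt{V}(\delta_{0})$ satisfying (\ref{TCT-length})--(\ref{TCT-angle}), and by (\ref{TCT-angle}) both base angles of the comparison triangle are at most $\pi/2$.

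The final step is to contradict the existence of such a comparison triangle. The intuition is Euclidean: in the flat plane a thin long triangle with apex angle $\phi\to 0$ and $r_{j}/r_{i}\to\infty$ has its base angle at $\tilde{q}_{i}$ converging to $\pi-\phi>\pi/2$. The hypothesis $c(\wt{M})>-\infty$ together with Lemma \ref{lem3.3}, which codifies the asymptotic behavior of the sector $\wt{V}(\delta_{0})$ under finite total curvature, upgrades this Euclidean picture to a genuine lower bound on the base angle at $\tilde{q}_{i}$ inside $\wt{V}(\delta_{0})$. For $i,j$ sufficiently large this lower bound exceeds $\pi/2$, contradicting what was obtained above and finishing the proof.

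The main obstacle is the coordination step in the second paragraph: the critical geodesics $\tau_{i},\tau_{j}$ are selected pair-by-pair from the criticality condition, and a priori their initial tangents at $p$ bear no direct relation to the convergent sequence $\gamma_{k}'(0)$. Managing this delicate selection is the primary job of Lemma \ref{lem4.6}. Once that is in place, Theorem \ref{thm4.8} reduces the argument to a plane-geometry-with-curvature-corrections estimate in $\wt{V}(\delta_{0})$, which Lemma \ref{lem3.3} supplies via the finite total curvature hypothesis.
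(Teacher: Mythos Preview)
Your overall architecture is right—bound the critical set, apply the Isotopy Lemma, use the new Toponogov theorem together with Lemma \ref{lem4.6} to push angle bounds into the comparison surface, and finish with Gauss--Bonnet plus Lemma \ref{lem3.3}. But the final contradiction step has a genuine gap.

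With two critical points $q_{i},q_{j}$ you obtain, in the comparison triangle, both base angles $\tilde{\angle}_{i},\tilde{\angle}_{j}\le\pi/2$, apex angle $\tilde{\phi}\to 0$, and (via Gauss--Bonnet and Lemma \ref{lem3.3}) $\tilde{\phi}+\tilde{\angle}_{i}+\tilde{\angle}_{j}\to\pi$. These statements are \emph{consistent}: both base angles could simply drift to $\pi/2$. To get a contradiction you need one base angle strictly bounded away from $\pi/2$, and you assert that arranging $r_{j}/r_{i}\to\infty$ forces $\tilde{\angle}_{i}>\pi/2$ ``by the Euclidean picture plus Lemma \ref{lem3.3}.'' But Lemma \ref{lem3.3} only controls the angle \emph{sum}; it gives no information on how the sum is distributed between $\tilde{\angle}_{i}$ and $\tilde{\angle}_{j}$. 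The Euclidean heuristic that $\tilde{\angle}_{j}\to 0$ when $r_{j}/r_{i}\to\infty$ is exactly what is missing, and it is not supplied by any of the lemmas you invoke.

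The paper closes this gap differently. Instead of pairing two critical points, it pairs one critical point $q_{i}$ with a point $\gamma(t_{i})$ on the limiting \emph{ray} $\gamma$, and uses the Cohn--Vossen technique (the Busemann-function argument for rays) to force the angle at $\gamma(t_{i})$ in $M$ to be $<\varepsilon$ for any prescribed $\varepsilon\in(0,\pi/2)$. This small angle transfers to the comparison triangle by Theorem \ref{thm4.8}; criticality at $q_{i}$ plus Lemma \ref{lem4.6} gives $\tilde{\angle}_{q_{i}}\le\pi/2$; and now the angle sum estimate yields $\pi\le\pi/2+\varepsilon+o(1)$, a genuine contradiction. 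The ray is what produces the strictly small angle that your two-critical-point setup lacks.

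A smaller point: your description of Lemma \ref{lem4.6} is off. It does \emph{not} control the apex angle of the triangle built from $\tau_{i},\tau_{j}$; that apex angle may well exceed $\delta_{0}$. The correct logic (which the paper uses) is: apply Theorem \ref{thm4.8} to the triangle with sides $\gamma_{i},\gamma_{j},\sigma$, whose apex angle \emph{is} $<\delta_{0}$, to obtain a comparison triangle in $\wt{V}(\delta_{0})$. Since the comparison triangle depends only on side lengths, it is equally a comparison triangle for the triangle with sides $\tau_{i},\tau_{j},\sigma$; Lemma \ref{lem4.6} then delivers the base-angle comparisons for the $\tau$-angles directly, with no need to bound the $\tau$-apex angle at all.
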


\begin{proof}
From the Isotopy Lemma, it is sufficient to prove that the set of critical points of 
$d(p, \, \cdot \, )$ is bounded. 
Suppose that there exists a divergent sequence $\{ q_{i} \}$ of critical points $q_{i} \in M$
of $d(p, \, \cdot \, )$. 
Let $\gamma_{i} : [0, d(p, q_{i})] \lra M$ be 
a minimal geodesic segment emanating from $p$ to $q_{i}$. 
We may assume, by taking a subsequence of $\{ q_{i} \}$, if necessary, that 
the sequence $\{ \gamma_{i} \}$ converges to a ray $\gamma$ emanating from $p$.
Thus, there exists a sufficiently large $i_{0} \in \N$ such that 
\begin{equation}\label{thm2.1-1}
\angle (\gamma'(0), \gamma'_{i}(0)) < \delta_{0}
\end{equation}
for any $i \ge i_{0}$. 
From now on, choose any $i \ge i_{0}$ and fix it. 
Let $\ve$ be an arbitrary positive number less than $\pi / 2$. 
Applying the Cohn\,-\,Vossen's technique (see \cite{CV}, or \cite[Lemma 2.2.1]{SST}), 
we can choose a positive number $t_{i}$ satisfying 
\begin{equation}\label{thm2.1-2}
\angle (\gamma'(t_{i}), \eta'_{i}(s_{i})) < \ve.
\end{equation}
Here, $\eta_{i} : [0, s_{i}] \lra M$ denotes a minimal geodesic segment emanating from $q_{i}$ 
to $\gamma(t_{i})$. 
Since the geodesic triangle $\triangle (p q_{i} \gamma(t_{i})) \subset M$, 
consisting of the edges $\gamma_{i}$, $\eta_{i}$, and $\gamma|_{[0, \, t_{i}]}$, 
satisfies 
\[
\angle (q_{i} p \gamma(t_{i})) < \delta_{0}
\] 
by (\ref{thm2.1-1}), 
it follows from Theorem \ref{thm4.8} and (\ref{thm2.1-2}) that 
there exists a geodesic triangle 
$
\wt{\triangle} (p q_{i} \gamma(t_{i})) 
:= \triangle(\tilde{p}\tilde{q}_{i}\tilde{\gamma}(t_{i})) \subset \wt{M}
$ 
satisfying (\ref{thm4.8-length}) (for $x = q_{i}$ and $y = \gamma (t_{i})$), 
\[
\angle (q_{i} p \gamma(t_{i})) \ge \angle (\tilde{q}_{i}\tilde{p}\tilde{\gamma}(t_{i}))
\]
and 
\begin{equation}\label{thm2.1-3.5}
\angle (\tilde{q}_{i} \tilde{\gamma}(t_{i}) \tilde{p}) < \ve.
\end{equation}
Since  
\[
{\lim_{i \to \infty} \angle (q_{i} p \gamma(t_{i})) = 0}_,
\]
we have 
\begin{equation}\label{thm2.1-3}
{\lim_{i \to \infty} \angle (\tilde{q}_{i}\tilde{p}\tilde{\gamma}(t_{i})) = 0}_.
\end{equation}
On the other hand, since each $q_{i}$ is a critical point of $d(p, \, \cdot \, )$, 
there exists a minimal geodesic segment $\sigma_{i} : [0, d(p, q_{i})] \lra M$ emanating from 
$q_{i}$ to $p$ such that 
\begin{equation}\label{thm2.1-4}
{\angle (\sigma'_{i}(0),  \eta'_{i}(0)) \le \frac{\pi}{2}}_.
\end{equation}
Let $\triangle (p \sigma_{i} (0) \gamma(t_{i})) \subset M$ denote 
the geodesic triangle consisting of the edges $\sigma_{i}$, $\eta_{i}$, and $\gamma|_{[0, \, t_{i}]}$. 
Since $\triangle (p \sigma_{i} (0) \gamma(t_{i}))$ has the same side lengths 
as $\triangle (p q_{i} \gamma(t_{i}))$, 
the triangle $\triangle (p \sigma_{i} (0) \gamma(t_{i}))$ admits the triangle 
$\wt{\triangle} (p q_{i} \gamma(t_{i})) \subset \wt{V}(\delta_{0})$ 
satisfying (\ref{lem4.6-length}) (for $x = \sigma_{i} (0)$ and $y = \gamma (t_{i})$) 
in Lemma \ref{lem4.6}. 
Thus, by Lemma \ref{lem4.6}, we have 
\begin{equation}\label{thm2.1-5}
\angle (\sigma'_{i}(0),  \eta'_{i}(0)) \ge \angle (\tilde{p} \tilde{q}_{i} \tilde{\gamma}(t_{i})).
\end{equation}
By (\ref{thm2.1-4}) and (\ref{thm2.1-5}), 
we get 
\begin{equation}\label{thm2.1-7}
{\angle (\tilde{p} \tilde{q}_{i} \tilde{\gamma}(t_{i})) \le \frac{\pi}{2}}_.
\end{equation}
Applying the Gauss\,--\,Bonnet Theorem to the geodesic triangle $\wt{\triangle} (p q_{i} \gamma(t_{i}))$, 
we have 
\begin{equation}\label{thm2.1-8}
{\int_{\wt{\triangle} (p q_{i} \gamma(t_{i}))} G\, d\wt{M} 
= \angle (\tilde{q}_{i} \tilde{p} \tilde{\gamma}(t_{i})) 
+ \angle (\tilde{p} \tilde{q}_{i} \tilde{\gamma}(t_{i})) 
+ \angle (\tilde{q}_{i} \tilde{\gamma}(t_{i}) \tilde{p}) - \pi}_.
\end{equation}
Since 
\[
\lim_{i \to \infty} \int_{\wt{\triangle} (p q_{i} \gamma(t_{i}))} G\, d\wt{M} = 0
\]
by Lemma \ref{lem3.3}, 
we have, by (\ref{thm2.1-8}), 
\begin{equation}\label{thm2.1-9}
\lim_{i \to \infty} 
\left( 
\angle (\tilde{q}_{i} \tilde{p} \tilde{\gamma}(t_{i})) 
+ \angle (\tilde{p} \tilde{q}_{i} \tilde{\gamma}(t_{i})) 
+ \angle (\tilde{q}_{i} \tilde{\gamma}(t_{i}) \tilde{p})
\right) 
= \pi.
\end{equation}
By (\ref{thm2.1-3.5}), (\ref{thm2.1-3}), (\ref{thm2.1-7}), and (\ref{thm2.1-9}), 
we get 
\[
{\frac{\pi}{2} \le \ve}_.
\]
This is a contradiction. 
Therefore, the set of critical points of $d(p, \, \cdot \, )$ is bounded. 
$\qedd$
\end{proof}

\section{Sector Theorem and Cut Loci of Model Surfaces}\label{sec:cut}

We will first 
introduce some fundamental tools in geometry of surfaces of revolution. 
For details of  the geometry, readers can refer 
to \cite[Chapter 7]{SST} (also refer to \cite{T1}, \cite{GMST}, and \cite{SiT2}). 
Remark that the following results hold for all non-compact model surfaces of revolution 
except for Theorem \ref{thm3.6}.

\bigskip

For a non-compact model surface of revolution $(\wt{M}, \tilde{p})$ 
whose metric satisfies (\ref{polar}), 
a unit speed geodesic $\tilde{\sigma} : [0, a) \lra \wt{M}$ $(0 < a \le \infty)$ is expressed by 
\[
\tilde{\sigma}(s) = (t(\tilde{\sigma}(s)), \theta(\tilde{\sigma}(s))) =: (t(s), \theta(s)).
\]
Then, there exists a non-negative constant $\nu$ depending only on $\tilde{\sigma}$ such that 
\begin{equation}\label{clairaut}
\nu = f(t(s))^{2} |\theta' (s)| = f(t(s)) \sin \angle(\tilde{\sigma}'(s), (\partial / \partial t)_{\tilde{\sigma}(s)}).
\end{equation}
This (\ref{clairaut}) is a famous formula, which is called the {\em Clairaut relation}. 
The constant $\nu$ is called the {\em Clairaut constant of} $\tilde{\sigma}$. 
Remark that, by (\ref{clairaut}),  
\begin{center}
{\em 
$\nu > 0$ if and only if $\tilde{\sigma}$ is not a meridian, or its subarc.
} 
\end{center} 
Since $\tilde{\sigma}$ is unit speed, we have, by (\ref{clairaut}), 
\begin{equation}\label{geodesic-eq}
{t'(s) = \pm \frac{\sqrt{f(t(s))^{2} - \nu^2}}{f(t(s))}}_.
\end{equation}
Remark that, by (\ref{geodesic-eq}),
\begin{center}
{\em
$t'(s) = 0$ if and only if $f(t(s)) = \nu$.
}
\end{center}
It follows from (\ref{clairaut}) and (\ref{geodesic-eq}) that, 
for a unit speed geodesic $\tilde{\sigma}(s) = (t(s), \theta(s))$, $s_1 \le s \le s_2$, with 
the Clairaut constant $\nu$, 
\begin{equation}\label{angle}
\theta(s_2) - \theta(s_1) 
= \lambda(t'(s)) \int_{t(s_1)}^{t(s_2)} \frac{\nu}{f(t) \sqrt{f(t)^{2} -\nu^2}}\,dt
\end{equation}
holds if $t'(s) \not= 0$ on $(s_1, s_2)$. 
Here, $\lambda(t'(s))$ denotes the sign of $t'(s)$.

\medskip

\begin{lemma}\label{lem3.2}
Let $(\wt{M}, \tilde{p})$ be a non-compact model surface of revolution, 
and $\wt{V}_{i}$ denote $\wt{V} ( 1 / i )$ for each $i \in \N$. 
Assume that there exist a constant $t_{0} > 0$ and a sequence 
\[
\left\{\tilde{\sigma}_{i} : [0, \ell_{i}] \lra \wt{V}_{i} \right\}_{i \in \N} 
\]
of unit speed geodesic segments such that 
\begin{equation}\label{lem3.2-1}
\tilde{\sigma}_{i}([0, \ell_{i}]) \cap \ol{B_{t_{0}} (\tilde{p})} \not= \emptyset
\end{equation}
for each $i \in \N$, and that 
\begin{equation}\label{lem3.2-2}
{\liminf_{i \to \infty} t(\tilde{\sigma}_{i}(\ell_{i})) > t_{0}}_.
\end{equation}
Then, 
\[
{\lim_{i \to \infty} \nu_{i} = 0}_.
\]
holds. 
Here, $\nu_{i}$ denotes the Clairaut constant of $\tilde{\sigma}_{i}$. 
\end{lemma}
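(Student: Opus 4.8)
The plan is to argue by contradiction using a limit–geodesic (compactness) argument. Suppose $\nu_{i}\not\to 0$; then, after passing to a subsequence, $\nu_{i}\ge c$ for some constant $c>0$, and I will extract a limit geodesic that is forced to lie inside the meridian $\theta\equiv 0$ bounding the shrinking sectors $\wt{V}_{i}$, hence has Clairaut constant $0$ --- contradicting $\nu_{i}\ge c$ via continuity of the Clairaut constant.

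First I would set things up so that a subsequential limit exists and is nondegenerate. Fix $t_{1}$ with $t_{0}<t_{1}<\liminf_{i}t(\tilde{\sigma}_{i}(\ell_{i}))$, so that $t(\tilde{\sigma}_{i}(\ell_{i}))>t_{1}$ for all large $i$ by (\ref{lem3.2-2}). For each such $i$ pick $\tau_{i}\in[0,\ell_{i}]$ with $\tilde{\sigma}_{i}(\tau_{i})\in\ol{B_{t_{0}}(\tilde{p})}$, which is possible by (\ref{lem3.2-1}), and reparametrize by $\wh{\sigma}_{i}(s):=\tilde{\sigma}_{i}(s+\tau_{i})$. Since $t(\,\cdot\,)=d(\tilde{p},\,\cdot\,)$ and $\tilde{\sigma}_{i}$ has unit speed, the triangle inequality yields $\ell_{i}-\tau_{i}\ge |t(\tilde{\sigma}_{i}(\ell_{i}))-t(\tilde{\sigma}_{i}(\tau_{i}))|\ge t_{1}-t_{0}=:\ve_{0}>0$, so each $\wh{\sigma}_{i}$ is defined at least on $[0,\ve_{0}]$, satisfies $\wh{\sigma}_{i}(0)\in\ol{B_{t_{0}}(\tilde{p})}$, and has $0<\theta(\wh{\sigma}_{i}(s))<1/i$ for all $s\in[0,\ve_{0}]$ because $\wh{\sigma}_{i}([0,\ve_{0}])\subset\wt{V}_{i}$. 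Note $\wh{\sigma}_{i}$ has the same Clairaut constant $\nu_{i}$ as $\tilde{\sigma}_{i}$.

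Since $\wt{M}$ is complete, $\ol{B_{t_{0}}(\tilde{p})}$ is compact, so the initial data $(\wh{\sigma}_{i}(0),\wh{\sigma}_{i}'(0))$ lie in a compact subset of the unit tangent bundle; after passing to a further subsequence they converge, and by smooth dependence of geodesics on initial data $\wh{\sigma}_{i}\to\wh{\sigma}_{\infty}$ in $C^{1}$ on $[0,\ve_{0}]$, where $\wh{\sigma}_{\infty}$ is a unit speed geodesic segment. As $0<\theta(\wh{\sigma}_{i}(s))<1/i\to 0$, I get $\theta(\wh{\sigma}_{\infty}(s))=0$ for every $s\in[0,\ve_{0}]$ with $\wh{\sigma}_{\infty}(s)\ne\tilde{p}$, so $\wh{\sigma}_{\infty}$ is a subarc of the meridian $\theta\equiv 0$ --- the only geodesic through $\tilde{p}$ meeting the ray $\theta=0$ --- and hence its Clairaut constant vanishes. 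Finally, since this segment is nondegenerate there is $s^{*}\in[0,\ve_{0}]$ with $\wh{\sigma}_{\infty}(s^{*})\ne\tilde{p}$; then $\wh{\sigma}_{i}(s^{*})\to\wh{\sigma}_{\infty}(s^{*})$, the radial field $\partial/\partial t$ is continuous near $\wh{\sigma}_{\infty}(s^{*})$, and $\wh{\sigma}_{i}'(s^{*})\to\wh{\sigma}_{\infty}'(s^{*})$, so the Clairaut relation (\ref{clairaut}) gives
\[
\nu_{i}=f\bigl(t(\wh{\sigma}_{i}(s^{*}))\bigr)\sin\angle\bigl(\wh{\sigma}_{i}'(s^{*}),(\partial/\partial t)_{\wh{\sigma}_{i}(s^{*})}\bigr)\lra f\bigl(t(\wh{\sigma}_{\infty}(s^{*}))\bigr)\sin\angle\bigl(\wh{\sigma}_{\infty}'(s^{*}),(\partial/\partial t)_{\wh{\sigma}_{\infty}(s^{*})}\bigr)=0,
\]
contradicting $\nu_{i}\ge c$. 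Therefore $\nu_{i}\to 0$.

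The main obstacle to watch is the behavior of $\wh{\sigma}_{\infty}$ near the pole $\tilde{p}$, where both $\theta$ and $\partial/\partial t$ are undefined: the identification of the limit as a meridian and the passage $\nu_{i}\to 0$ must be run at an interior parameter $s^{*}$ at which $\wh{\sigma}_{\infty}$ avoids $\tilde{p}$, and for this one needs $\wh{\sigma}_{\infty}$ to be a genuine nondegenerate segment --- exactly what the uniform bound $\ell_{i}-\tau_{i}\ge\ve_{0}>0$ coming from (\ref{lem3.2-2}) guarantees. A more computational alternative via the angle integral (\ref{angle}), bounding $\int_{t_{0}}^{t_{1}}\nu_{i}\,[f(t)\sqrt{f(t)^{2}-\nu_{i}^{2}}\,]^{-1}\,dt$ along an ascent of $t$ from $t_{0}$ to $t_{1}$ by the sector width $1/i$, is also possible but must cope with turning points where $f(t(s))=\nu_{i}$, which the compactness argument circumvents.
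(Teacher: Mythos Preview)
Your argument is correct and takes a genuinely different route from the paper's. The paper proceeds by a direct computation with the angle integral~(\ref{angle}): it locates the last parameter $u_{i}$ at which $t\circ\tilde{\sigma}_{i}$ equals $t_{0}$, picks $v_{i}>u_{i}$ with $t(\tilde{\sigma}_{i}(v_{i}))=t_{1}$, and bounds
\[
\frac{1}{i}>\int_{t_{0}}^{t_{1}}\frac{\nu_{i}}{f(t)\sqrt{f(t)^{2}-\nu_{i}^{2}}}\,dt\ \ge\ \nu_{i}\int_{t_{0}}^{t_{1}}\frac{dt}{f(t)^{2}},
\]
from which $\nu_{i}\to 0$ is immediate. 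This is exactly the ``computational alternative'' you flag at the end; the turning-point issue you mention is real but harmless, since on $(u_{i},\ell_{i}]$ one has $t>t_{0}$, the $\theta$-coordinate is monotone, and any backtracking of $t$ only increases the left-hand side. Your compactness argument trades this explicit inequality for a soft limit: it is slightly longer to set up but conceptually cleaner, needs no care about monotone subintervals of $t$, and the only delicate point---behavior near $\tilde{p}$---you handle correctly by evaluating the Clairaut relation at a parameter $s^{*}$ where the limit geodesic avoids the pole. Either approach is perfectly adequate here; the paper's gives a quantitative rate $\nu_{i}=O(1/i)$, while yours transfers unchanged to any situation where geodesics depend continuously on initial data and the limiting set forces the invariant to vanish.
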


\begin{proof}
By the assumptions (\ref{lem3.2-1}) and (\ref{lem3.2-2}), we can find a constant number 
\[
u_{i} := \sup \{ s \in (0, \ell_{i}) \, | \, t(\tilde{\sigma}_{i}(s)) = t_{0} \}
\]
for each $i \in \N$. 
Moreover, it follows from (\ref{lem3.2-2}) that 
for any sufficiently large $i \in \N$,  
there exists a constant number $t_{1} > 0$ such that 
\[
t_{0} < t_{1} < t(\tilde{\sigma}_{i}(\ell_{i})).
\]
Then, we can find a constant number 
$v_{i} \in (t \circ \tilde{\sigma}_{i})^{-1} (t_{1}) \subset (0, \ell_{i})$ 
such that $v_{i} > u_{i}$. 
Since $\tilde{\sigma}_{i}([u_{i}, v_{i}]) \subset \wt{V}_{i}$, 
we have, by (\ref{angle}),    
\begin{align}\label{lem3.2-3}
\frac{1}{i} 
&> 
\int_{t(\tilde{\sigma}_{i}(u_{i}))}^{t(\tilde{\sigma}_{i}(v_{i}))} 
\frac{\nu_{i}}{f(t) \sqrt{f(t)^{2} -\nu_{i}^2}}\,dt \notag\\[2mm]
&= 
\int_{t_{0}}^{t_{1}} \frac{\nu_{i}}{f(t) \sqrt{f(t)^{2} -\nu_{i}^2}}\,dt \notag\\[2mm]
&\ge 
\nu_{i} \cdot \int_{t_{0}}^{t_{1}} \frac{1}{f(t)^{2}}\,dt 
> 0
\end{align}
for any sufficiently large $i \in \N$. 
Thus, by (\ref{lem3.2-3}), 
we get 
\[
{\lim_{i \to \infty} \nu_{i} = 0}_.
\]
$\qedd$
\end{proof}

\medskip

\begin{lemma}\label{lem3.3}
Let $(\wt{M}, \tilde{p})$ be a non-compact model surface of revolution, 
and set 
\[
\ol{V}(\theta_{i}) := \left\{ \tilde{x} \in \wt{M} \, | \, 0 \le \theta(\tilde{x}) \le \theta_{i} \right\}
\]
for each $i \in \N$, where $\{ \theta_{i} \}$ is a sequence of positive numbers 
convergent to $0$. 
If $\wt{M}$ admits $c(\wt{M}) > - \infty$, then 
\[
\lim_{i \to \infty} \int_{\ol{V}(\theta_{i})} |G|\, d\wt{M} = 0
\]
holds. 
In particular, 
\[
\lim_{i \to \infty} \int_{\wt{\triangle}_{i}} G\, d\wt{M} = 0
\]
holds for any sequence $\{ \wt{\triangle}_{i} \}$ 
of geodesic triangles $\wt{\triangle}_{i} \subset \ol{V}(\theta_{i})$.
\end{lemma}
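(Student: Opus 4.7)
The plan is to exploit the rotational symmetry of $\wt{M}$ and reduce everything to a one-dimensional integral that the finite total curvature hypothesis controls.

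First, I would write the area element in the polar coordinates $(t, \theta)$ from (\ref{polar}) as $d\wt{M} = f(t)\, dt\, d\theta$, and observe that the Gaussian curvature $G = -f''(t)/f(t)$ of a model surface of revolution depends only on $t$. Hence Fubini's theorem gives the identity
\[
\int_{\ol{V}(\theta_{i})} |G|\, d\wt{M} \;=\; \int_{0}^{\theta_{i}}\!\!\int_{0}^{\infty} |G(\tilde{\gamma}(t))|\, f(t)\, dt\, d\theta \;=\; \theta_{i}\,\int_{0}^{\infty} |G(\tilde{\gamma}(t))|\, f(t)\, dt .
\]
The same computation applied to the whole surface yields
\[
\int_{\wt{M}} |G|\, d\wt{M} \;=\; 2\pi\,\int_{0}^{\infty} |G(\tilde{\gamma}(t))|\, f(t)\, dt .
\]

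Next, I would invoke the hypothesis $c(\wt{M}) > -\infty$, which, as noted in the paragraph after Main Theorem, is equivalent to $\int_{\wt{M}} |G|\, d\wt{M} < \infty$. The displayed identities above then give
\[
\int_{\ol{V}(\theta_{i})} |G|\, d\wt{M} \;=\; \frac{\theta_{i}}{2\pi}\,\int_{\wt{M}} |G|\, d\wt{M} ,
\]
and since $\theta_{i} \to 0$ while the right-hand factor is a fixed finite number, the first conclusion follows immediately.

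Finally, for the ``in particular'' statement, I would just use monotonicity of the integral: because $\wt{\triangle}_{i} \subset \ol{V}(\theta_{i})$,
\[
\left| \int_{\wt{\triangle}_{i}} G\, d\wt{M} \right| \;\le\; \int_{\wt{\triangle}_{i}} |G|\, d\wt{M} \;\le\; \int_{\ol{V}(\theta_{i})} |G|\, d\wt{M} ,
\]
and the right-hand side tends to $0$ by the first part. Honestly there is no real obstacle to this lemma; it is a direct consequence of rotational symmetry together with the definition of finite total curvature, and the only point that needs a brief sentence of justification is the equivalence between $c(\wt{M}) > -\infty$ and absolute integrability of $G$, which the paper has already flagged.
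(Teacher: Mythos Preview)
Your proof is correct and follows essentially the same approach as the paper: rotational symmetry gives $\int_{\ol{V}(\theta_i)} |G|\,d\wt{M} = \frac{\theta_i}{2\pi}\int_{\wt{M}}|G|\,d\wt{M}$, which tends to zero since $c(\wt{M})>-\infty$ is equivalent to $\int_{\wt{M}}|G|\,d\wt{M}<\infty$. The paper unnecessarily splits the integral into a bounded ball and its complement before invoking this proportionality, whereas your direct global application of Fubini is cleaner.
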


\begin{proof}
Since $c(\wt{M}) > - \infty$, 
for any $\ve > 0$, there exists a number $r(\ve) > 0$ such that 
\begin{equation}\label{lem3.3-1}
{\int_{\wt{M} \setminus B_{r(\ve)}(\tilde{p})} |G|\,d\wt{M} < \frac{\ve}{2}}_,
\end{equation}
where $B_{r(\ve)}(\tilde{p}) \subset \wt{M}$
is the open distance $r(\ve)$-ball around $\tilde{p} \in \wt{M}$. 
Then, there exists $i_{0}(\ve) \in \N$ such that 
\begin{equation}\label{lem3.3-2}
\int_{\ol{V}(\theta_{i}) \cap B_{r(\ve)}(\tilde{p})} |G|\,d\wt{M} 
= \frac{\theta_{i}}{2\pi}\int_{B_{r(\ve)}(\tilde{p})} |G|\,d\wt{M} < \frac{\ve}{2}
\end{equation}
holds for all $i > i_{0}(\ve)$.
Therefore, by (\ref{lem3.3-1}) and (\ref{lem3.3-2}), we get the first assertion, that is, 
\begin{align}\label{lem3.3-3}
\int_{\ol{V}(\theta_{i})} |G|\,d\wt{M} 
\le \int_{\ol{V}(\theta_{i}) \cap B_{r(\ve)}(\tilde{p})} |G|\,d\wt{M} + 
\int_{\wt{M} \setminus B_{r(\ve)}(\tilde{p})} |G|\,d\wt{M} < \ve
\end{align}
for all $i > i_{0}(\ve)$. 
Furthermore, by (\ref{lem3.3-3}), 
\[
\left| \int_{\wt{\triangle}_{i}} G\,d\wt{M} \,\right| \le \int_{\wt{\triangle}_{i}} |G|\,d\wt{M} 
\le \int_{\ol{V}(\theta_{i})} |G|\,d\wt{M} < \ve
\]
holds for all $i > i_{0}(\ve)$, which is the second assertion. 
$\qedd$
\end{proof}

\begin{lemma}\label{lem3.4}{\bf (Key Lemma)}
Let $(\wt{M}, \tilde{p})$ be a non-compact model surface of revolution. 
If $\wt{M}$ admits $c(\wt{M}) > - \infty$, then, for each $t > 0$, there exists a constant number 
$\delta(t) \in (0, \pi)$ such that
\[
\tilde{\sigma}([0, \ell]) \cap \ol{B_{t}(\tilde{p})} = \emptyset 
\]
holds for any minimal geodesic segment 
$\tilde{\sigma} : [0, \ell] \lra \wt{V} (\delta(t)) \subset \wt{M}$, 
along which $\tilde{\sigma}(0)$ is conjugate to $\tilde{\sigma}(\ell)$. 
\end{lemma}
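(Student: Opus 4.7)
The plan is to argue by contradiction: suppose the conclusion fails for some $t>0$, so that for each $i\in\N$ there exists a minimal geodesic segment $\tilde{\sigma}_i\colon[0,\ell_i]\to\wt{V}(1/i)$ along which $\tilde{\sigma}_i(0)$ is conjugate to $\tilde{\sigma}_i(\ell_i)$ and $\tilde{\sigma}_i([0,\ell_i])\cap\ol{B_t(\tilde{p})}\neq\emptyset$; let $\nu_i$ denote the Clairaut constant of $\tilde{\sigma}_i$. The plan is to obtain a contradiction through the angle-change formula (\ref{angle}) combined with Lemma \ref{lem3.2} and the finite total curvature hypothesis on $\wt{M}$.

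First, I would verify that $\nu_i\to 0$ via Lemma \ref{lem3.2}. The intersection hypothesis with $t_0=t$ is given; the remaining hypothesis $\liminf_i t(\tilde{\sigma}_i(\ell_i))>t$ is arranged, if necessary, by reversing the orientation of $\tilde{\sigma}_i$ or restricting it to a sub-interval exiting $\ol{B_t(\tilde{p})}$. The ``trapped endpoints'' alternative, in which both endpoints remain in a bounded region near $\tilde{p}$, is ruled out by an Arzel\`a--Ascoli limiting argument: a subsequential limit would produce a minimal meridian segment with conjugate endpoints, contradicting that $f(t)>0$ on $(0,\infty)$ provides a nowhere-vanishing Jacobi field along any meridian. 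Next, I would use the Killing field $\partial_{\theta}$ restricted to $\tilde{\sigma}_i$: its normal component is a smooth Jacobi field whose magnitude equals $\sqrt{f(t(\tilde{\sigma}_i(\cdot)))^2-\nu_i^2}$, vanishing exactly at turning points. A standard Wronskian computation in the two-dimensional space of normal Jacobi fields shows that if this Killing Jacobi field has no zero on $[0,\ell_i]$, then the ratio of two independent Jacobi fields is strictly monotonic, so the Jacobi field vanishing at $s=0$ cannot vanish again at $s=\ell_i$. Thus conjugacy forces a point $s_i^*\in[0,\ell_i]$ with $f(t_i^*)=\nu_i$, where $t_i^*:=t(\tilde{\sigma}_i(s_i^*))$; since $f(0)=0$ and $f'(0)=1$, one then has $t_i^*\to 0$ as $\nu_i\to 0$.

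Finally, I would derive the contradiction from (\ref{angle}) applied between $s_i^*$ and whichever endpoint has its $t$-coordinate bounded below by a fixed positive constant $t'$ (if neither does, the whole geodesic lies in a fixed neighborhood of $\tilde{p}$ for large $i$, where the positivity of the conjugate radius at $\tilde{p}$ rules out conjugate endpoints on short geodesics). Using $f(t)=\nu_i+f'(t_i^*)(t-t_i^*)+O((t-t_i^*)^2)$ near $t_i^*$, for any fixed small $\epsilon>0$ with $t_i^*+\epsilon<t'$,
\[
|\theta(\tilde{\sigma}_i(\ell_i))-\theta(\tilde{\sigma}_i(0))|\ \geq\ \int_{t_i^*}^{t_i^*+\epsilon}\frac{\nu_i}{f(t)\sqrt{f(t)^2-\nu_i^2}}\,dt\ \sim\ \sqrt{\frac{2\epsilon}{\nu_i\,f'(t_i^*)}}\ \lra\ +\infty
\]
as $\nu_i\to 0$, since $f'(t_i^*)\to 1$. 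This contradicts the bound $|\theta(\tilde{\sigma}_i(\ell_i))-\theta(\tilde{\sigma}_i(0))|<1/i\to 0$ forced by $\tilde{\sigma}_i\subset\wt{V}(1/i)$. The main obstacle in this plan is the second step: converting ``conjugate endpoints'' rigorously into ``interior turning point'' via the Killing Jacobi field, particularly in the degenerate case where a turning point coincides with an endpoint. A secondary but delicate point is the ``trapped endpoints'' dichotomy in the first step, where the absence of conjugate pairs on meridians (a consequence of $f>0$) is needed to exclude the compact-limit scenario.
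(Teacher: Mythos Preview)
Your proposal has a genuine gap, and it also betrays a conceptual miss: you announce the finite total curvature hypothesis in your plan but never actually use it in the execution.

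The gap is your inference that $t_i^*\to 0$. You obtain a turning point $s_i^*$ with $f(t_i^*)=\nu_i$ and then write ``since $f(0)=0$ and $f'(0)=1$, one has $t_i^*\to 0$ as $\nu_i\to 0$.'' This only shows that the \emph{smallest} root of $f(t)=\nu_i$ tends to $0$; it says nothing about which root your turning point sits at. Under the hypotheses of the lemma it is perfectly possible that $\liminf_{t\to\infty}f(t)=0$ (for instance, take $f$ equal to $t$ near $0$ and decaying like $2/(1+t)$ at infinity; this has $\lim_{t\to\infty}f'(t)=0$, hence finite total curvature). In that regime $f(t)=\nu_i$ has a second root $t_+\to\infty$, and nothing you wrote excludes the turning point from sitting there. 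Your divergence estimate $\sqrt{2\epsilon/(\nu_i f'(t_i^*))}\to\infty$ then collapses, since it needs $f'(t_i^*)\to 1$, i.e., the inner root. One can try to repair this by observing that the \emph{global minimum} of $t$ along $\tilde\sigma_i$ is $\le t_0$ and, when interior, is necessarily an inner turning point; but you must then also dispose of the case where this minimum occurs at the endpoint $s=0$ with $t'(0)>0$, which lands you back at a single outer turning point with no control on $|f'(t_+)|$.

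By contrast, the paper's argument is insensitive to where the turning point lies. Having obtained $\nu_i\to 0$ (via Lemma~\ref{lem3.2}) and a turning point $a_i$ exactly as you do, the paper forms the geodesic triangle $\triangle(\tilde p,\tilde\sigma_i(a_i),\tilde\sigma_i(u_i))$ inside $\wt V(1/i)$, with $t(\tilde\sigma_i(u_i))=t_0$. The angle at the turning point is $\pi/2$, the angle at $\tilde p$ tends to $0$, and then Gauss--Bonnet together with Lemma~\ref{lem3.3} (this is precisely where $c(\wt M)>-\infty$ enters) forces the remaining angle to $\pi/2$. That contradicts the Clairaut relation $\sin\angle(\tilde p\,\tilde\sigma_i(u_i)\,\tilde\sigma_i(a_i))=\nu_i/f(t_0)\to 0$. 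The finite total curvature hypothesis is doing real work here, and your plan to bypass it via the bare formula~\eqref{angle} does not go through as written.
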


\begin{proof}
Since $|\theta (\tilde{\sigma}(0)) - \theta (\tilde{\sigma}(\ell))| < \pi$ holds 
for all minimal geodesic segments $\tilde{\sigma} : [0, \ell] \lra \wt{M} \setminus \{ \tilde{p}\}$, 
it is sufficient to show a required number $\delta(t)$ is positive. 
Suppose that there exist a constant $t_{0} > 0$ and a sequence 
\[
\left\{\tilde{\sigma}_{i} : [0, \ell_{i}] \lra \wt{V}_{i} \right\}_{i \in \N}  
\]
of minimal geodesic segments, along which 
$\tilde{\sigma}_{i}(0)$ is conjugate to $\tilde{\sigma}_{i}(\ell_{i})$ for each $i \in \N$, 
such that 
\begin{equation}\label{lem3.4-2}
\tilde{\sigma}_{i}([0, \ell_{i}]) \cap \ol{B_{t_{0}} (\tilde{p})} \not= \emptyset
\end{equation}
for each $i \in \N$.
Here, we set $\wt{V}_{i} := \wt{V} ( 1 / i )$ for each $i \in \N$. 
Since 
$\tilde{\sigma}_{i}(0)$ is conjugate to $\tilde{\sigma}_{i}(\ell_{i})$ along $\tilde{\sigma}_{i}$ 
for each $i \in \N$, 
we see $\displaystyle{\lim_{i \to \infty} \ell_{i} = \infty}$, 
and hence we may assume 
\begin{equation}\label{lem3.4-3}
{\liminf_{i \to \infty} t(\tilde{\sigma}_{i}(\ell_{i})) > t_{0}}_.
\end{equation}
Thus, by Lemma \ref{lem3.2}, 
we have
\begin{equation}\label{lem3.4-4}
{\lim_{i \to \infty} \nu_{i} = 0}_,
\end{equation}
where $\nu_{i}$ is the Clairaut constant of $\tilde{\sigma}_{i}$. 
Since $\tilde{\sigma}_{i}(0)$ is conjugate to $\tilde{\sigma}_{i}(\ell_{i})$ along $\tilde{\sigma}_{i}$ 
for each $i \in \N$, 
there exists $a_{i} \in [0, \ell_{i}]$ such that 
\begin{equation}\label{lem3.4-5}
(t \circ \tilde{\sigma}_{i})' (a_{i}) = 0
\end{equation}
(cf. \cite[Proposition 7.2.1]{SST}). 
Let $u_{i} \in [0, \ell_{i}]$ be the parameter value of $\tilde{\sigma}_{i}$ such that 
\[
t(\tilde{\sigma}_{i}(u_{i})) = t_{0}.
\]
We consider a geodesic triangle 
$\wt{\triangle}_{i} := \triangle (\tilde{p}\,\tilde{\sigma}_{i}(a_{i})\,\tilde{\sigma}_{i}(u_{i}))$ 
in $\wt{V}_{i}$. 
It follows from (\ref{clairaut}) and (\ref{lem3.4-4}) that 
\begin{equation}\label{lem3.4-6}
\lim_{i \to \infty} \sin \left( \angle(\tilde{p}\,\tilde{\sigma}_{i}(u_{i})\,\tilde{\sigma}_{i}(a_{i})) \right) 
= 
\lim_{i \to \infty} \frac{\nu_{i}}{f(t_{0})} 
= 0
\end{equation}
holds. 
Furthermore, by (\ref{lem3.4-5}), we have 
\begin{equation}\label{lem3.4-7}
\angle(\tilde{p}\,\tilde{\sigma}_{i}(a_{i})\,\tilde{\sigma}_{i}(u_{i})) = \frac{\pi}{2}
\end{equation}
Remark that 
\begin{equation}\label{lem3.4-8}
{\lim_{i \to \infty}\angle(\tilde{\sigma}_{i}(a_{i})\,\tilde{p}\,\tilde{\sigma}_{i}(u_{i})) = 0}_,
\end{equation}
since $\wt{\triangle}_{i} \subset \wt{V}_{i}$ for each $i \in \N$.
By Lemma \ref{lem3.3}, (\ref{lem3.4-7}), (\ref{lem3.4-8}), and the Gauss\,--\,Bonnet Theorem, 
we get 
\begin{equation}\label{lem3.4-9}
{\lim_{i \to \infty} 
\angle(\tilde{p}\,\tilde{\sigma}_{i}(u_{i})\,\tilde{\sigma}_{i}(a_{i})) 
= \frac{\pi}{2}
}_.
\end{equation}
The equation (\ref{lem3.4-9}) contradicts the equation (\ref{lem3.4-6}). 
$\qedd$
\end{proof}

\bigskip

Hebda \cite{He} proved that 
{\em the cut locus $\Cut(x)$ of a point $x$ in a complete Riemannian $2$-manifold is a local tree}, 
that is, for any $y \in \Cut(x)$ and any neighborhood $\cU$ around $y$ in the surface, 
there exists an open neighborhood $\cO \subset \cU$ around $y$ such that any two cut points in $\cO$ 
can be joined by a unique rectifiable Jordan arc in $\cO \cap \Cut(x)$ (see also \cite{ST1} 
for Alexandrov surfaces). 
Here, a {\em Jordan arc} means an arc homeomorphic to the interval $[0, 1]$. 

\begin{theorem}{\bf (Sector Theorem)}\label{thm3.6}
Let $(\wt{M}, \tilde{p})$ be 
a non-compact model surface of revolution satisfying the (VM), 
or the (CH) for some $R_{0} > 0$. 
If $\wt{M}$ admits $c(\wt{M}) > - \infty$, 
then there exists a positive number $\delta_{0} \in (0, \pi)$ 
such that $\wt{V} ( \delta_{0} )$ has no pair of cut points. 
\end{theorem}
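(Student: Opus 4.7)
The plan is to argue by contradiction. If no such $\delta_{0}$ exists, then for each $i \in \N$ there are cut points $\tilde{x}_{i}, \tilde{y}_{i} \in \wt{V}(1/i)$ with $\tilde{y}_{i} \in \Cut(\tilde{x}_{i})$. Passing to a subsequence, I may assume that one of two standard situations occurs uniformly in $i$: either \textbf{(a)} $\tilde{y}_{i}$ is the first conjugate point of $\tilde{x}_{i}$ along some minimal geodesic $\tilde{\sigma}_{i} \subset \wt{V}(1/i)$, or \textbf{(b)} there are two distinct minimal geodesic segments $\tilde{\sigma}_{i,1}, \tilde{\sigma}_{i,2}$ from $\tilde{x}_{i}$ to $\tilde{y}_{i}$, both confined to $\wt{V}(1/i)$ by minimality together with the Clairaut relation (\ref{clairaut}).

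Case (a) I would close by a direct extension of the proof of Key Lemma (Lemma~\ref{lem3.4}). The conjugacy of $\tilde{y}_{i}$ to $\tilde{x}_{i}$ forces a turning point $(t \circ \tilde{\sigma}_{i})'(a_{i}) = 0$ via \cite[Proposition 7.2.1]{SST}; combining this with (\ref{clairaut}), Lemma~\ref{lem3.3} on the vanishing of $\int |G|$ over thin sectors, and Gauss--Bonnet applied to the right-angled triangle $\triangle(\tilde{p}, \tilde{\sigma}_{i}(a_{i}), \tilde{\sigma}_{i}(u_{i}))$ for an appropriate parameter $u_{i}$, one recovers the same contradiction between a vanishing Clairaut constant and a right angle as in the proof of Key Lemma.

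Case (b) is the main substance. The two distinct minimizers bound a geodesic bigon $\Omega_{i} \subset \ol{V}(\theta_{i})$ with $\theta_{i} \to 0$, and Gauss--Bonnet gives
\[
\int_{\Omega_{i}} G\, d\wt{M} \,=\, \alpha_{i} + \beta_{i},
\]
where $\alpha_{i}, \beta_{i} \in (0, \pi)$ are the interior angles at $\tilde{x}_{i}, \tilde{y}_{i}$. Lemma~\ref{lem3.3} then forces $\alpha_{i} + \beta_{i} \to 0$. The main obstacle is contradicting this vanishing, and this is where the (VM) or (CH) hypothesis enters decisively. Under (CH), the tail $\{t \ge R_{0}\}$ is non-positively curved, so two geodesics issuing from a common point cannot reunite while remaining in this tail; under (VM), the cut-locus description for monotone radial curvature (\cite{T1}, \cite{SiT2}) applied to the ``tail model'' yields the same conclusion. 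In either case both $\tilde{\sigma}_{i,j}$ must dip into $\ol{B_{R_{0}}(\tilde{p})}$. Combining the compactness of $\ol{B_{R_{0}}(\tilde{p})}$ with Lemma~\ref{lem3.2} (which forces the Clairaut constants $\nu_{i,j} \to 0$) and the uniqueness theorem for the geodesic ODE, I can extract a limit configuration in which the two minimizers share initial and final directions at coincident endpoints on the meridian $\theta = 0$ and must therefore coalesce, contradicting their distinctness in the sequence. The delicate point is managing the subcase $t(\tilde{x}_{i}) \to \infty$ or $t(\tilde{y}_{i}) \to \infty$, where the naive compactness argument fails; here one splits each $\tilde{\sigma}_{i,j}$ into its ``inside $B_{R_{0}}$'' and ``outside $B_{R_{0}}$'' arcs, uses (\ref{angle}) on the outside arcs to bound their angular spread by the Clairaut constants, and uses Lemma~\ref{lem3.3} together with finite total curvature on the outside to prevent enough curvature from accumulating to support a nondegenerate bigon.
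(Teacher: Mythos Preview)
Your dichotomy into cases (a) and (b) is natural, but the paper's proof is organized quite differently and avoids your Case~(b) entirely. The paper fixes $R_{1}>R_{0}$, takes the single number $\delta(R_{1})$ already provided by Lemma~\ref{lem3.4}, and then shows that $\wt{V}(\delta(R_{1}))$ has no cut pair. The crucial move you are missing is the reduction to the conjugate case via Hebda's tree structure of the cut locus: if $\tilde{x},\tilde{y}\in\wt{V}(\delta(R_{1}))$ are cut points joined by two distinct minimizers, these bound a relatively compact disc in the sector, and since $\Cut(\tilde{x})$ is a tree one finds an \emph{endpoint} $\tilde{z}$ of $\Cut(\tilde{x})$ inside this disc, which is necessarily conjugate to $\tilde{x}$. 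So after one line the bigon case disappears and one is left only with a single conjugate pair in $\wt{V}(\delta(R_{1}))$.

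There is then a genuine gap in your Case~(a). The argument of Lemma~\ref{lem3.4} does \emph{not} produce a contradiction from the mere existence of conjugate endpoints in a thin sector; its contradiction is obtained under the extra hypothesis that the geodesic meets a \emph{fixed} ball $\ol{B_{t_{0}}(\tilde{p})}$ (this is what makes $\nu_{i}\to 0$ via Lemma~\ref{lem3.2} and what fixes the level $t_{0}$ defining your vertex $u_{i}$). In your sequence $\wt{V}(1/i)$ no such intersection is given, and the endpoints may run off to infinity, so your ``appropriate parameter $u_{i}$'' and the vanishing of the Clairaut constants are not available. The correct use of Lemma~\ref{lem3.4} is the paper's: it forces the conjugate-point geodesic to lie entirely in $\{t>R_{1}\}$, and only \emph{then} does (CH) or (VM) enter. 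Under (CH) one has $G\le 0$ along the whole geodesic, contradicting conjugacy immediately; under (VM) one invokes the Rauch-type argument of \cite[Lemma~3.1]{GMST} (or \cite[Lemma~3.1]{SiT2}) to compare with a nearby minimizer to a point on the cut-locus arc emanating from $\tilde{y}$ and obtain a conjugate point on a minimal segment. Your Case~(b) sketch, besides being unnecessary once the tree reduction is in hand, has its own soft spots: the claim that \emph{both} minimizers must enter $\ol{B_{R_{0}}(\tilde{p})}$ is not justified (the Gauss--Bonnet/curvature-sign argument yields only that the bigon, hence at least one boundary geodesic, meets $B_{R_{0}}$), and the appeal to a ``tail model'' under (VM) and to ODE-uniqueness limits when $t(\tilde{x}_{i})\to\infty$ is not made precise.
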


\begin{proof} 
Choose any number $R_{1} > R_{0}$, and fix it. 
We will prove that $\wt{V}(\delta (R_{1}))$ has no pair of cut points, 
where $\delta (R_{1}) \in (0, \pi)$ is the number guaranteed in Lemma \ref{lem3.4}. 
Suppose that $\wt{V}(\delta (R_{1}))$ has a pair of cut points $\tilde{x}$ and $\tilde{y}$. 
Let $\tilde{\sigma} : [0, d(\tilde{x}, \tilde{y})] \lra \wt{V}(\delta (R_{1}))$ 
denote a minimal geodesic segment joining $\tilde{x}$ to $\tilde{y}$. 
We may assume that $\tilde{x}$ is conjugate to $\tilde{y}$ along $\tilde{\sigma}$. 
Otherwise, we may find another minimal geodesic segment 
$\tilde{\alpha}$ joining $\tilde{x}$ to $\tilde{y}$. 
Clearly $\tilde{\sigma}$ and $\tilde{\alpha}$ bound a relatively compact domain $\cD$ 
in $\wt{V}(\delta (R_{1}))$. 
Since $\Cut (\tilde{x})$ is a tree, 
we may find an endpoint $\tilde{z} \in \cD$ of $\Cut (\tilde{x})$. 
Then, $\tilde{x}$ is conjugate to $\tilde{z}$ along any minimal geodesic 
segments joining $\tilde{x}$ to $\tilde{z}$. 
By exchanging $\tilde{y}$ and $\tilde{z}$, 
we may assume that $\tilde{x}$ is conjugate to $\tilde{y}$ along $\tilde{\sigma}$. 
From Lemma \ref{lem3.4}, 
the minimal geodesic segment $\tilde{\sigma}$ does not intersect $\ol{B_{R_{1}}(\tilde{p})}$. 
If $G \circ \tilde{\gamma}(R_{1})$ is non-positive, 
then $G \circ \tilde{\gamma}(t) \le 0$ holds on $[R_{1}, \infty)$ where 
$\tilde{\gamma}$ denotes any meridian emanating from $\tilde{p} = \tilde{\gamma} (0)$. 
Hence $G(\tilde{\sigma}(s)) \le 0$ holds for all $s \in [0, d(\tilde{x}, \tilde{y})]$. 
This contradicts the property that $\tilde{x}$ is conjugate to $\tilde{y}$ along $\tilde{\sigma}$. 
If $G \circ \tilde{\gamma}(R_{1})$ is positive and $G \circ \tilde{\gamma}$ is 
non-increasing on $[R_{0}, \infty)$ where 
$\tilde{\gamma}$ denotes any meridian emanating from $\tilde{p} = \tilde{\gamma} (0)$, 
then since $\tilde{y}$ is not a single cut point of $\tilde{x}$, 
we may find a unit speed, rectifiable Jordan arc $\tilde{\xi}(r)$ in $\Cut (\tilde{x})$ 
emanating from $\tilde{y} = \tilde{\xi} (0)$. 
Since $\tilde{\sigma}$ does not intersect $\ol{B_{R_{1}}(\tilde{p})}$, 
there exists a sufficiently small number $\ve_{0} > 0$ such that 
the domain bounded by two minimal geodesic segments 
$\tilde{\tau}$ and $\tilde{\eta}$ joining $\tilde{x}$ to 
$\tilde{\xi} (\ve_{0})$ does not intersect $\ol{B_{R_{1}}(\tilde{p})}$. 
We assume that $\tilde{\tau}$ and $\tilde{\eta}$ are chosen in such a way that 
\[
\angle (\tilde{\eta}'(0), (\partial / \partial t)_{\tilde{x}}) 
< 
\angle (\tilde{\sigma}'(0), (\partial / \partial t)_{\tilde{x}}) 
< 
\angle (\tilde{\tau}'(0), (\partial / \partial t)_{\tilde{x}}). 
\]
Then, we may get a contradiction by repeating the argument 
in the proof of \cite[Lemma 3.1]{GMST}, or \cite[Lemma 3.1]{SiT2}.
In the following, 
we hence state only the sketch of the argument. 
We may prove that $\tilde{\sigma}$ is shorter than $\tilde{\tau}$, 
and that 
\begin{equation}\label{thm3.6-1}
t (\tilde{\sigma}(s)) \ge t (\tilde{\tau}(s))
\end{equation}
for all $s \in [0, d(\tilde{x}, \tilde{y})]$. 
The equation (\ref{thm3.6-1}) implies that  
\begin{equation}\label{thm3.6-2}
G(\tilde{\sigma}(s)) \le G(\tilde{\tau}(s))
\end{equation}
for all $s \in [0, d(\tilde{x}, \tilde{y})]$, 
since $G \circ \tilde{\gamma}$ is non-increasing on $[R_{0}, \infty)$. 
From the Rauch comparison theorem and (\ref{thm3.6-2}), 
the geodesic segment $\tilde{\tau}|_{[0, \, d(\tilde{x}, \tilde{y})]}$ 
has a conjugate point of $\tilde{x}$ along the segment. 
This contradicts the fact that $\tilde{\tau}$ is minimal. 
Therefore, $\wt{V}(\delta (R_{1}))$ has no pair of cut points.
$\qedd$
\end{proof}

\section{A New Type of Toponogov Comparison Theorem}\label{sec:TCT}

In the pure sectional curvature geometry, 
the Toponogov comparison theorem has been 
a very important tool in the investigation of the relationship between the curvature and topology 
of Riemannian manifolds (cf.\,\cite{B}, \cite{CG}, \cite{GS}, \cite{G1}, and so on). 
After the Gromov convergence theorem \cite{G2} and 
the Grove and Peterson finiteness theorem for homotopy 
and diffeomorphism types \cite{GP}, \cite{GPW}, 
it has become very important 
to investigate the topology of Alexandrov spaces
as one of terminal stations 
of the pure sectional curvature geometry (cf.\,\cite{Al}, \cite{BGP}, \cite{P1}, \cite{P2}, 
\cite{O1}, \cite{O2}, \cite{OS}, \cite{ST1}, \cite{SY1}, \cite{SY2}, \cite{Y}, and so on).\par
As stated in Section \ref{sec:int},  
such comparison theorems in the radial curvature geometry 
were proved by making use of model surfaces of revolution 
instead of complete surfaces of constant Gaussian 
curvature (see \cite{A}, \cite{IMS}, and \cite{SiT2}). 
In the radial curvature geometry, 
all geodesic triangles must have the base point as one of vertices. 
Thus, the radial curvature geometry looks more restricted than the pure sectional curvature geometry, 
but this is not the case. 
We should remark that we can construct a model surface of revolution for 
any complete Riemannian manifold with an arbitrary given point 
as a base point (see Model Lemma in Subsection \ref{PML}).\par
Our purpose in this section is to establish 
a new type of Toponogov comparison theorem (Theorem \ref{thm4.8}). 
The key tools of the proof are our Alexandrov convexity (Lemma \ref{lem4.3}) 
and Essential Lemma (Lemma \ref{lem4.6}).  

\subsection{Alexandrov Convexity}

The Alexandrov convexity at a base point was proved in \cite{IMS} 
when comparison surfaces are von Mangoldt surfaces of revolution. 
We first establish our Alexandrov convexity (Lemma \ref{lem4.3}) 
at a base point in a {\em more general situation}.    
For the purpose, 
we need three lemmas (Lemmas \ref{lem3.5}, \ref{newlem4.3.1}, 
and \ref{newlem4.3.2}).

\medskip

Let $(\wt{M}, \tilde{p})$ be an arbitrary non-compact model surface of revolution 
whose metric satisfies (\ref{polar}). 
Then, we have the next lemma, 
which is a very important tool in the investigation of radial curvature geometry. 

\begin{lemma}\label{lem3.5}{\rm(\cite[Lemma 7.3.2]{SST})}
Take a point $\tilde{q} \in \wt{M} \setminus \{\tilde{p} \}$ with $\theta(\tilde{q}) = 0$. 
If two points $\tilde{x}_{1}, \tilde{x}_{2} \in \wt{M}$ satisfy 
$t(\tilde{x}_{1}) = t(\tilde{x}_{2})$ and $0 \le \theta(\tilde{x}_{1}) < \theta(\tilde{x}_{2}) \le \pi$, 
then, 
\[
d(\tilde{q}, \tilde{x}_{1}) < d(\tilde{q}, \tilde{x}_{2})
\]
holds.
\end{lemma}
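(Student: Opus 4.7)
The plan is to take a minimizing geodesic $\tilde{\sigma}_2\colon[0,L_2]\to\wt{M}$ from $\tilde{q}$ to $\tilde{x}_2$, where $L_2:=d(\tilde{q},\tilde{x}_2)$, and use it to build a strictly shorter broken path from $\tilde{q}$ to $\tilde{x}_1$. Since the reflection $\theta\mapsto-\theta$ is an isometry of $\wt{M}$ fixing $\tilde{q}$, I may arrange the Clairaut constant $\nu$ of $\tilde{\sigma}_2$ to satisfy $\nu\ge 0$.

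In the main case $\nu>0$, the Clairaut relation (\ref{clairaut}) yields $\theta'(s)=\nu/f(t(s))^2>0$ along $\tilde{\sigma}_2$; hence the continuous real-valued lift $s\mapsto\theta(\tilde{\sigma}_2(s))$ is strictly increasing from $0$, and since its terminal value is congruent to $\theta_2$ modulo $2\pi$ while $\theta_1\in[0,\theta_2)$, the lift must attain the value $\theta_1$ at some first parameter $s_1\in[0,L_2)$. Setting $\tilde{y}:=\tilde{\sigma}_2(s_1)$, so that $t_y:=t(\tilde{y})$ and $\theta(\tilde{y})=\theta_1$, subsegment minimality gives $d(\tilde{q},\tilde{y})=s_1$ and $d(\tilde{y},\tilde{x}_2)=L_2-s_1$. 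Concatenating $\tilde{\sigma}_2|_{[0,s_1]}$ with the meridional arc of length $|t_y-t_0|$ from $\tilde{y}$ to $\tilde{x}_1$ along $\theta=\theta_1$ yields $d(\tilde{q},\tilde{x}_1)\le s_1+|t_y-t_0|$. Combining with the strict projection estimate $d(\tilde{y},\tilde{x}_2)>|t_y-t_0|$ (addressed below) then produces the desired inequality $d(\tilde{q},\tilde{x}_1)<L_2=d(\tilde{q},\tilde{x}_2)$.

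In the exceptional case $\nu=0$, the geodesic $\tilde{\sigma}_2$ is meridional, and since its prolongation through $\tilde{p}$ lies on the meridian $\theta=\pi$ we must have $\theta_2=\pi$ and $L_2=t(\tilde{q})+t_0$. If $\theta_1=0$, then $d(\tilde{q},\tilde{x}_1)=|t(\tilde{q})-t_0|<t(\tilde{q})+t_0$ holds trivially. If $\theta_1\in(0,\pi)$, any geodesic from $\tilde{q}$ through $\tilde{p}$ continues on the meridian $\theta=\pi$ and so misses $\tilde{x}_1$, whence no minimizer from $\tilde{q}$ to $\tilde{x}_1$ traverses $\tilde{p}$; the broken polar path from $\tilde{q}$ through $\tilde{p}$ to $\tilde{x}_1$, with interior angle $\theta_1\in(0,\pi)$ at the smooth point $\tilde{p}$, is then strictly shortened by the standard corner-smoothing variation, and the result follows.

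The main obstacle is the strict projection inequality $d(\tilde{y},\tilde{x}_2)>|t_y-t_0|$: from the metric (\ref{polar}) one reads $ds\ge|dt|$ pointwise along any path, with equality only on purely meridional arcs where $\theta'\equiv 0$; but no such arc can connect $\tilde{y}$ and $\tilde{x}_2$ whose $\theta$-coordinates satisfy $\theta_1\neq\theta_2$, so the inequality is strict. A subsidiary technical point is the monotonic wrapping argument that guarantees the lift of $\theta$ along $\tilde{\sigma}_2$ passes through the value $\theta_1$; this is automatic once $\nu>0$ forces strict monotonicity and $\theta_1$ lies below the terminal value $\theta_2+2\pi k$ for every $k\ge 0$.
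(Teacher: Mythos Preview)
The paper does not give its own proof of this lemma; it is quoted verbatim from \cite[Lemma~7.3.2]{SST} and used as a black box. So there is nothing in the paper to compare against, and your argument stands on its own.

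Your proof is essentially correct and follows the natural route: intercept a minimizer from $\tilde q$ to $\tilde x_2$ at the meridian $\theta=\theta_1$, then replace its tail by a meridional arc to $\tilde x_1$, using the pointwise inequality $ds^2=dt^2+f(t)^2\,d\theta^2\ge dt^2$ to see that the discarded tail was strictly longer than the meridional replacement. The case split $\nu>0$ versus $\nu=0$ and the strict-projection argument are handled cleanly.

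There is one imprecision you should tighten. Your sentence ``since the reflection $\theta\mapsto-\theta$ is an isometry fixing $\tilde q$, I may arrange $\nu\ge 0$'' does not quite do what you want: the reflection sends $\tilde\sigma_2$ to a minimizing geodesic from $\tilde q$ to the \emph{reflected} point $\tilde x_2'=(t_0,-\theta_2)$, not to $\tilde x_2$ itself (unless $\theta_2=\pi$). So reflection alone does not produce a minimizer to $\tilde x_2$ with $\theta'\ge 0$. The repair is easy and uses only what you already have: if every minimizer to $\tilde x_2$ had $\theta'<0$, reflect one to obtain a minimizer from $\tilde q$ to $\tilde x_2'$ with $\theta'>0$ and the same length $L_2=d(\tilde q,\tilde x_2')=d(\tilde q,\tilde x_2)$; its $\theta$-lift still passes through $\theta_1$ (because $0\le\theta_1<\theta_2\le\pi\le 2\pi-\theta_2$), and your cut-and-meridian construction together with the strict projection estimate then gives $d(\tilde q,\tilde x_1)<L_2$ exactly as before. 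Alternatively, the same projection estimate shows directly that no minimizer from $\tilde q$ to a point with $\theta\in(0,\pi)$ can have $\theta'<0$, so in fact the reduction is unnecessary except in the boundary case $\theta_2=\pi$, where reflection genuinely fixes $\tilde x_2$.
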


\medskip

Let $\tilde{\mu}_{\alpha} : [0, \infty) \lra \wt{M}$ denote the meridian defined by $\theta = \alpha$, 
which emanates from $\tilde{p}$. 
It follows from Lemma \ref{lem3.5} that, 
for any $a > 0$, $c > 0$, and $\theta_{0} \in (0, \pi)$, 
\begin{equation}\label{4.3}
{
\liminf_{\theta \downarrow \theta_{0}} 
\frac{
d(\tilde{\mu}_{0}(a), \tilde{\sigma}_{c}(\theta)) - d(\tilde{\mu}_{0}(a), \tilde{\sigma}_{c}(\theta_{0}))
}
{
\theta - \theta_{0}
} 
\ge 0
}_,
\end{equation}
where $\tilde{\sigma}_{c} : [0, 2 \pi) \lra \wt{M}$ denotes the parallel $t = c$, that is, 
$\tilde{\sigma}_{c}(\theta) := \tilde{\mu}_{\theta}(c)$. 
In the next lemma, 
we will prove that the left-hand term in the equation (\ref{4.3}) is strictly positive.

\medskip

\begin{lemma}\label{newlem4.3.1}
For any $a_{0} > 0$, $c_{0} > 0$, and $\theta_{0} \in (0, \pi)$, 
there exist constant numbers $\ve_{1} \in (0, \pi / 2)$ and $\delta > 0$ 
such that 
\begin{equation}\label{newlem4.3.1-eq}
|d(\tilde{\mu}_{0}(a), \tilde{\sigma}_{c}(\theta_{2})) - d(\tilde{\mu}_{0}(a), \tilde{\sigma}_{c}(\theta_{1}))| 
\ge 
\left( 
f(c) \sin \ve_{1} 
\right)
|\theta_{2} - \theta_{1}|
\end{equation}
holds for all $a \in (a_{0} - \delta, a_{0} + \delta)$, 
$c \in (c_{0} - \delta, c_{0} + \delta)$, 
and $\theta_{1}, \theta_{2} \in (\theta_{0} - \delta, \theta_{0} + \delta)$.
\end{lemma}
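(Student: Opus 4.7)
The plan is to upgrade the nonstrict monotonicity recorded in (\ref{4.3}) to a quantitative linear estimate by combining the first variation of arclength with a compactness/contradiction argument on minimizing geodesics. Write $D(a,c,\theta) := d(\tilde{\mu}_{0}(a),\tilde{\sigma}_{c}(\theta))$. Since the parallel $\tilde{\sigma}_{c}$ has speed $f(c)$, the comparison curve $t \equiv c$ shows that $D$ is $f(c)$-Lipschitz in $\theta$, hence absolutely continuous, so (\ref{newlem4.3.1-eq}) will follow by integration once I control the one-sided derivative $\partial_{\theta}^{+}D$ from below on a small neighborhood of $(a_{0},c_{0},\theta_{0})$.

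First I would apply the first variation of arclength. For any unit-speed minimizing geodesic $\tilde{\gamma}:[0,L]\to\wt{M}$ from $\tilde{\mu}_{0}(a)$ to $\tilde{\sigma}_{c}(\theta)$,
\[
\langle\tilde{\gamma}'(L),\,\tilde{\sigma}_{c}'(\theta)\rangle \;=\; f(c)\sin\psi(\tilde{\gamma}),
\]
where $\psi(\tilde{\gamma})\in[0,\pi]$ denotes the angle between $\tilde{\gamma}'(L)$ and $(\partial/\partial t)_{\tilde{\sigma}_{c}(\theta)}$; by (\ref{clairaut}) this is precisely the Clairaut constant of $\tilde{\gamma}$. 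The standard one-sided calculus of distance functions then yields
\[
\partial_{\theta}^{+}D(a,c,\theta) \;=\; \inf_{\tilde{\gamma}}\,f(c)\sin\psi(\tilde{\gamma}),
\]
the infimum running over all minimizers from $\tilde{\mu}_{0}(a)$ to $\tilde{\sigma}_{c}(\theta)$.

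Second, I would establish the uniform bound $\sin\psi(\tilde{\gamma})\ge\sin\varepsilon_{1}$ valid on the whole $\delta$-neighborhood and for every such minimizer. Suppose no such $(\varepsilon_{1},\delta)$ exists. Pick sequences $a_{n}\to a_{0}$, $c_{n}\to c_{0}$, $\theta_{n}\to\theta_{0}$ and minimizers $\tilde{\gamma}_{n}$ between $\tilde{\mu}_{0}(a_{n})$ and $\tilde{\sigma}_{c_{n}}(\theta_{n})$ with $\sin\psi(\tilde{\gamma}_{n})\to 0$; their lengths are uniformly bounded by continuity of $D$, so an Arzel\`a--Ascoli argument (geodesics being $C^{1}$-equicontinuous) delivers a subsequential limit $\tilde{\gamma}_{\infty}$, still a minimizing geodesic from $\tilde{\mu}_{0}(a_{0})$ to $\tilde{\sigma}_{c_{0}}(\theta_{0})$, with $\sin\psi(\tilde{\gamma}_{\infty})=0$ by continuity of the endpoint angle. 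This means $\tilde{\gamma}_{\infty}'(L_{\infty})=\pm(\partial/\partial t)_{\tilde{\sigma}_{c_{0}}(\theta_{0})}$, so by uniqueness of geodesics from initial conditions, $\tilde{\gamma}_{\infty}$ is a subarc of $\tilde{\mu}_{\theta_{0}}$ (or of $\tilde{\mu}_{\theta_{0}}\cup\tilde{\mu}_{\theta_{0}+\pi}$ if it crosses $\tilde{p}$). But for $\theta_{0}\in(0,\pi)$ and $a_{0}>0$, the point $\tilde{\mu}_{0}(a_{0})$ lies on neither of those meridians, a contradiction.

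Assembling: with $\partial_{\theta}^{+}D\ge f(c)\sin\varepsilon_{1}$ holding pointwise on the neighborhood, absolute continuity of $D$ in $\theta$ gives, for $\theta_{1}<\theta_{2}$,
\[
D(a,c,\theta_{2})-D(a,c,\theta_{1}) \;=\; \int_{\theta_{1}}^{\theta_{2}}\partial_{\theta}D(a,c,\theta)\,d\theta \;\ge\; f(c)\sin\varepsilon_{1}\,(\theta_{2}-\theta_{1}),
\]
which is (\ref{newlem4.3.1-eq}) after taking absolute values. The main obstacle I anticipate is the compactness step: I need to know that the Arzel\`a--Ascoli limit $\tilde{\gamma}_{\infty}$ is a nondegenerate minimizing geodesic on which the first-variation formula still applies as stated, and that the endpoint angle is continuous under $C^{1}$ convergence. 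Both are guaranteed by $a_{0},c_{0}>0$ (the endpoints remain uniformly away from the pole $\tilde{p}$, where the polar chart is singular), by the convergence of lengths of a convergent sequence of unit-speed minimizers to the length of the limit, and by the elementary observation above that no meridian (or bi-meridian) through $\tilde{p}$ can contain $\tilde{\mu}_{0}(a_{0})$ when $\theta_{0}\in(0,\pi)$.
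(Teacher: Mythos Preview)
Your argument is essentially the paper's own: both bound the one-sided $\theta$-derivative of $D$ from below via the first variation formula (the paper cites \cite[Lemma 2.1]{IT} for this), both obtain the uniform angle bound by observing that no minimizing geodesic from $\tilde{\mu}_{0}(a)$ to $\tilde{\sigma}_{c}(\theta)$ can be tangent to the meridian at its endpoint, and both integrate using absolute continuity. Your Arzel\`a--Ascoli step makes explicit the compactness that the paper leaves implicit in the sentence ``there exists a positive constant $\ve_{1}$ such that \ldots''.

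One small point to tighten: your identity $\langle\tilde{\gamma}'(L),\tilde{\sigma}_{c}'(\theta)\rangle=f(c)\sin\psi(\tilde{\gamma})$ holds only up to sign, since the Clairaut relation gives $\nu=f(c)\sin\psi$ but not the sign of $\theta'$ at the endpoint. Your compactness argument shows $\sin\psi\ge\sin\ve_{1}$, i.e.\ $|\cos\Phi|\ge\sin\ve_{1}$ where $\Phi=\angle(\tilde{\gamma}'(L),\tilde{\sigma}_{c}'(\theta))$, but you still need $\cos\Phi\ge 0$ to conclude. This follows immediately from (\ref{4.3}) (equivalently Lemma~\ref{lem3.5}): the liminf of the difference quotient is nonnegative, so by the very first-variation formula you invoke, $\min_{\tilde{\gamma}}\cos\Phi\ge 0$, hence every minimizer has $\cos\Phi=+\sin\psi$. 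With that one line added, your proof is complete and matches the paper's.
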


\begin{proof}
Choose any $a \in (a_{0} - \delta, a_{0} + \delta)$, 
$c \in (c_{0} - \delta, c_{0} + \delta)$,
and $\theta \in (\theta_{0} - \delta, \theta_{0} + \delta)$, 
where $\delta$ is a fixed positive number less than 
\[
{\frac{1}{3} \min 
\left\{
a_{0}, \theta_{0}, c_{0}, \pi - \theta_{0}
\right\}
}_.
\]
Since no minimal geodesic segments joining $\tilde{\mu}_{0}(a)$ to $\tilde{\sigma}_{c}(\theta)$ 
is tangent to the meridian $\tilde{\mu}_{\theta}$, 
there exists a positive constant $\ve_{1} \in (0, \pi / 2)$ such that 
\begin{equation}\label{newlem4.3.1-1}
\Phi (\tilde{\gamma}, \theta) := \angle (\tilde{\sigma}'_{c}(\theta), \tilde{\gamma}'(d(\tilde{\mu}_{0}(a), \tilde{\sigma}_{c}(\theta)))) 
\le \frac{\pi}{2} - \ve_{1}
\end{equation}
holds for all $a \in (a_{0} - \delta, a_{0} + \delta)$, 
$\theta \in (\theta_{0} - \delta, \theta_{0} + \delta)$, 
and all minimal geodesic segments $\tilde{\gamma}$ joining $\tilde{\mu}_{0}(a)$ to $\tilde{\sigma}_{c}(\theta)$. 
Therefore, it follows from \cite[Lemma 2.1]{IT} and (\ref{newlem4.3.1-1}) that, 
for each $\theta_{1} \in (\theta_{0} - \delta, \theta_{0} + \delta)$, 
\begin{align}\label{newlem4.3.1-2}
\liminf_{\theta \downarrow \theta_{1}} 
\frac{
d(\tilde{\mu}_{0}(a), \tilde{\sigma}_{c}(\theta)) - d(\tilde{\mu}_{0}(a), \tilde{\sigma}_{c}(\theta_{1}))
}
{
d(\tilde{\sigma}_{c}(\theta), \tilde{\sigma}_{c}(\theta_{1}))
}
&= 
- \cos \left( \min_{\tilde{\gamma}}\{\pi - \Phi (\tilde{\gamma}, \theta_{1})\} \right)\notag\\[2mm] 
&\ge 
- \cos \left( \frac{\pi}{2}  + \ve_{1}\right)\notag\\[2mm] 
&=
\sin \ve_{1}.
\end{align}
Since 
\[
{
\lim_{\theta \to \theta_{1}}
\frac{d(\tilde{\sigma}_{c}(\theta), \tilde{\sigma}_{c}(\theta_{1}))}{|\theta - \theta_{1}|} 
= f(c)
}_,
\]
we get, by (\ref{newlem4.3.1-2}),
\begin{equation}\label{newlem4.3.1-3}
{
\liminf_{\theta \downarrow \theta_{1}} 
\frac{
d(\tilde{\mu}_{0}(a), \tilde{\sigma}_{c}(\theta)) - d(\tilde{\mu}_{0}(a), \tilde{\sigma}_{c}(\theta_{1}))
}
{
\theta - \theta_{1}
} 
\ge 
f(c) \sin \ve_{1}
}_.
\end{equation}
Since $d(\tilde{\mu}_{0}(a), \tilde{\sigma}_{c}(\theta))$ is Lipschitz with respect to $\theta$, 
it follows from Dini's theorem \cite{D} (cf.\,\cite[Section 2.3]{Hw}, \cite[Theorem 7.29]{WZ}) 
that $d(\tilde{\mu}_{0}(a), \tilde{\sigma}_{c}(\theta))$ is differentiable almost everywhere and 
\[
\int^{\theta_{2}}_{\theta_{1}} 
\frac{\partial}{\partial \theta} 
d(\tilde{\mu}_{0}(a), \tilde{\sigma}_{c}(\theta))\,d \theta 
= 
d(\tilde{\mu}_{0}(a), \tilde{\sigma}_{c}(\theta_{2})) 
- d(\tilde{\mu}_{0}(a), \tilde{\sigma}_{c}(\theta_{1}))
\]
holds 
for all $\theta_{1}, \theta_{2} \in (\theta_{0} - \delta, \theta_{0} + \delta)$. 
Thus, by (\ref{newlem4.3.1-3}), 
\[
|d(\tilde{\mu}_{0}(a), \tilde{\sigma}_{c}(\theta_{2})) - d(\tilde{\mu}_{0}(a), \tilde{\sigma}_{c}(\theta_{1}))| 
\ge 
\left( 
f(c) \sin \ve_{1} 
\right)
|\theta_{2} - \theta_{1}|
\]
holds for all $a \in (a_{0} - \delta, a_{0} + \delta)$, 
$c \in (c_{0} - \delta, c_{0} + \delta)$, 
and $\theta_{1}, \theta_{2} \in (\theta_{0} - \delta, \theta_{0} + \delta)$.
$\qedd$
\end{proof}

\bigskip

It follows from Lemma \ref{lem3.5} that, for any positive numbers $a, b$, and $c$ with 
\[
|a - c| < b < d(\tilde{\mu}_{0}(a), \tilde{\mu}_{\pi}(c)),
\]
there exists a geodesic triangle $\triangle (\tilde{p}\tilde{q}\tilde{r})$ in $\wt{M}$ 
such that 
\[
d(\tilde{p}, \tilde{q}) = a, \quad d(\tilde{q}, \tilde{r}) = b, \quad d(\tilde{r}, \tilde{p}) = c.
\]
Let $\theta (a, b, c)$ denote the angle $\angle (\tilde{r}\tilde{p}\tilde{q})$ of 
the triangle $\triangle (\tilde{p}\tilde{q}\tilde{r})$. 

\medskip

\begin{lemma}\label{newlem4.3.2}
The function $\theta (a, b, c)$ defined on 
\[
T := \left\{(a, b, c) \in \R^{3} \, | \, a, b, c > 0, \, |a - c | < b < d(\tilde{\mu}_{0}(a), \tilde{\mu}_{\pi}(c)) \right\}
\]
is locally Lipschitz.
\end{lemma}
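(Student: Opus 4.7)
The plan is to realize $\theta(a,b,c)$ as the implicit solution of a one-variable monotone equation and then invoke Lemma \ref{newlem4.3.1} for the uniform lower bound that makes the inversion Lipschitz.

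Position the triangle so that $\tilde{q} = \tilde{\mu}_{0}(a)$ and $\tilde{r} = \tilde{\sigma}_{c}(\theta(a,b,c))$. By Lemma \ref{lem3.5}, the map $\theta \mapsto d(\tilde{\mu}_{0}(a),\tilde{\sigma}_{c}(\theta))$ is strictly increasing on $[0,\pi]$ from $|a-c|$ to $d(\tilde{\mu}_{0}(a),\tilde{\mu}_{\pi}(c))$, so $\theta(a,b,c)$ is the unique solution in $(0,\pi)$ of
\[
F(a,c,\theta) := d(\tilde{\mu}_{0}(a), \tilde{\sigma}_{c}(\theta)) = b.
\]
Continuity of $F$ together with strict monotonicity in $\theta$ gives, by a standard argument, continuity of $\theta$ on $T$.

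Fix $(a_{0}, b_{0}, c_{0}) \in T$ and set $\theta_{0} := \theta(a_{0}, b_{0}, c_{0}) \in (0, \pi)$. Lemma \ref{newlem4.3.1} furnishes $\delta > 0$ and $K := f(c_{0}) \sin \ve_{1} / 2 > 0$ (after a harmless shrinking of $\delta$) such that
\[
|F(a, c, \theta_{2}) - F(a, c, \theta_{1})| \ge K\, |\theta_{2} - \theta_{1}|
\]
for all $(a, c) \in (a_{0} - \delta, a_{0} + \delta) \times (c_{0} - \delta, c_{0} + \delta)$ and $\theta_{1}, \theta_{2} \in (\theta_{0} - \delta, \theta_{0} + \delta)$. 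On the other hand, because $\tilde{\mu}_{0}$ is a unit-speed meridian and, for fixed $\theta$, $c \mapsto \tilde{\sigma}_{c}(\theta) = \tilde{\mu}_{\theta}(c)$ is likewise a unit-speed meridian, the triangle inequality yields
\[
|F(a, c, \theta) - F(a', c', \theta)| \le |a - a'| + |c - c'|,
\]
i.e.\ $F$ is $1$-Lipschitz in each of $a$ and $c$ (uniformly in the remaining arguments).

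By continuity of $\theta(\cdot,\cdot,\cdot)$, shrink the neighborhood of $(a_{0}, b_{0}, c_{0})$ so that $\theta(a,b,c) \in (\theta_{0} - \delta, \theta_{0} + \delta)$ throughout. For two triples $(a,b,c)$ and $(a',b',c')$ in this neighborhood, write $\theta := \theta(a,b,c)$ and $\theta' := \theta(a',b',c')$. The identities $F(a,c,\theta) = b$ and $F(a',c',\theta') = b'$ rearrange to
\[
F(a',c',\theta) - F(a',c',\theta') = \bigl[F(a',c',\theta) - F(a,c,\theta)\bigr] + (b - b'),
\]
whence the lower bound on the left and the upper bound on the first bracket on the right combine to give
\[
K\,|\theta - \theta'| \le |a - a'| + |c - c'| + |b - b'|,
\]
which is the desired local Lipschitz estimate.

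The substantive content is entirely concentrated in Lemma \ref{newlem4.3.1}; once that is in hand, the main thing to verify is that its lower bound is \emph{uniform} over a product neighborhood in $(a,c,\theta_{1},\theta_{2})$, which is exactly how Lemma \ref{newlem4.3.1} is stated. The only other point requiring care is the continuity of $\theta(a,b,c)$, and that follows routinely from the joint continuity of $F$ together with the strict monotonicity of $F(a,c,\cdot)$.
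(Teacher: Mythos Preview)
Your proof is correct and follows essentially the same approach as the paper: both arguments realize $\theta(a,b,c)$ as the implicit solution of $d(\tilde{\mu}_{0}(a),\tilde{\sigma}_{c}(\theta))=b$, invoke Lemma~\ref{newlem4.3.1} for the uniform lower bound in $\theta$, and use the triangle inequality along the unit-speed meridians for the $1$-Lipschitz dependence on $a$ and $c$. The only cosmetic difference is that the paper treats the three coordinates one at a time (and appeals to the symmetry $\theta(a,b,c)=\theta(c,b,a)$ for the $c$-variable), whereas you handle all variables simultaneously and are a bit more explicit about first securing continuity of $\theta$ so that the $\theta$-values stay in the window where Lemma~\ref{newlem4.3.1} applies.
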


\begin{proof}
Choose any point $(a_{0}, b_{0}, c_{0}) \in T$. 
First, we will prove that 
\begin{equation}\label{newlem4.3.2-1}
|\theta (a_{0} + \varDelta a, b_{0}, c_{0}) - \theta (a_{0}, b_{0}, c_{0})| 
\le \frac{1}{f(c_{0})\sin \ve_{1}}\, |\varDelta a|
\end{equation}
for all $\varDelta a \in \R$ with $|\varDelta a| < \delta$. 
Here, the numbers $\ve_{1}$ and $\delta$ are the constants guaranteed to 
$a_{0}$, $c_{0}$, and $\theta_{0} := \theta (a_{0}, b_{0}, c_{0})$ in Lemma \ref{newlem4.3.1}. 
It follows from Lemma \ref{newlem4.3.1} that 
\begin{equation}\label{newlem4.3.2-2}
|
d(\tilde{\mu}_{0}(a_{0} + \varDelta a), \tilde{\sigma}_{c_{0}}(\theta_{0} + \varDelta_{a} \theta))
- 
d(\tilde{\mu}_{0}(a_{0} + \varDelta a), \tilde{\sigma}_{c_{0}}(\theta_{0}))
| 
\ge 
\left( f(c_{0})\sin \ve_{1} \right) |\varDelta_{a} \theta|
\end{equation}
for all $\varDelta a \in \R$ with $|\varDelta a| < \delta$.
Here, we set 
\[
\varDelta_{a} \theta := \theta (a_{0} + \varDelta a, b_{0}, c_{0}) - \theta_{0}.
\]
It is clear that 
\begin{equation}\label{newlem4.3.2-3}
d(\tilde{\mu}_{0}(a_{0} + \varDelta a), \tilde{\sigma}_{c_{0}}(\theta_{0} + \varDelta_{a} \theta))
= 
d(\tilde{\mu}_{0}(a_{0}), \tilde{\sigma}_{c_{0}}(\theta_{0}))
= b_{0}.
\end{equation}
It follows from the triangle inequality that 
\begin{equation}\label{newlem4.3.2-4}
|
d( \tilde{\mu}_{0}(a_{0}), \tilde{\sigma}_{c_{0}}(\theta_{0}))
- 
d(\tilde{\mu}_{0}(a_{0} + \varDelta a), \tilde{\sigma}_{c_{0}}(\theta_{0}))
| 
\le 
d( \tilde{\mu}_{0}(a_{0}), \tilde{\mu}_{0}(a_{0} + \varDelta a)) 
= 
|\varDelta a|.
\end{equation}
By (\ref{newlem4.3.2-2}), (\ref{newlem4.3.2-3}), and (\ref{newlem4.3.2-4}), 
we get 
\[
|\varDelta_{a} \theta| \le \frac{1}{f(c_{0}) \sin \ve_{1}} |\varDelta a|
\]
for all $\varDelta a \in \R$ with $|\varDelta a| < \delta$. 
Thus, the proof of (\ref{newlem4.3.2-1}) is complete. 
Since 
\[
\theta (a, b, c) = \theta (c, b, a)
\]
for all $(a, b, c) \in T$, 
\begin{equation}\label{newlem4.3.2-5}
|\theta (a_{0}, b_{0}, c_{0} + \varDelta c) - \theta (a_{0}, b_{0}, c_{0})| 
\le \frac{1}{f(c_{0})\sin \ve_{1}}\, |\varDelta c|
\end{equation}
holds for all $\varDelta c \in \R$ with $|\varDelta c| < \delta$. 
We omit the proof of the following equation (\ref{newlem4.3.2-6}), 
since the proof is similar to that of (\ref{newlem4.3.2-1})\,:
\begin{equation}\label{newlem4.3.2-6}
|\theta (a_{0}, b_{0} + \varDelta b, c_{0}) - \theta (a_{0}, b_{0}, c_{0})| 
\le \frac{1}{f(c_{0})\sin \ve_{1}}\, |\varDelta b|
\end{equation}
for all $\varDelta b \in \R$ with $|\varDelta b| < \delta$. 
Therefore, the function $\theta (a, b, c)$ is locally Lipschitz at $(a_{0}, b_{0}, c_{0}) \in T$ 
by (\ref{newlem4.3.2-1}), (\ref{newlem4.3.2-5}), and (\ref{newlem4.3.2-6}). 
$\qedd$
\end{proof}

\medskip

\begin{lemma}\label{lem4.3}{\bf (Alexandrov Convexity)}
Let $(M,p)$ be a complete open Riemannian $n$-manifold $M$ 
whose radial curvature at the base point $p$ is bounded from below by
that of a non-compact model surface of revolution $(\wt{M}, \tilde{p})$. 
For an arbitrary fixed geodesic triangle $\triangle(pxy)$ in $M$, 
let $x,\,y : [0, 1] \lra M$ be its edges 
which are minimal geodesic segments 
joining $p = x(0) = y(0)$ to $x = x(1)$, $y = y(1)$, respectively, 
and are parametrized proportionally to arc-length.\par 
Assume that 
there exists a unique geodesic triangle 
$\wt{\triangle}(p x(t) y(t)) := \triangle(\tilde{p}\tilde{x}(t)\tilde{y}(t))$ in $\wt{M}$ 
up to an isometry corresponding to the triangle $\triangle(px(t)y(t))$ for each $t \in (0, 1)$ 
such that  
\begin{equation}\label{lem4.3-length}
d(\tilde{p},\tilde{x}(t))=d(p, x(t)), \ d(\tilde{p}, \tilde{y}(t))=d(p, y(t)), \ 
d(\tilde{x}(t), \tilde{y}(t))=d(x(t), y(t)),  
\end{equation}
and that 
\begin{equation}\label{lem4.3-angle}
\angle (px(t)y(t)) \ge \angle (\tilde{p}\tilde{x}(t)\tilde{y}(t)), \quad
\angle (py(t)x(t)) \ge \angle (\tilde{p}\tilde{y}(t)\tilde{x}(t)).
\end{equation}
If $\angle (xpy) < \pi$, 
then, the function 
\[
\theta (t) := \angle (\tilde{x}(t)\tilde{p}\tilde{y}(t))
\]
is locally Lipschitz on $(0, 1)$, 
and non-increasing on $(0, 1]$.
\end{lemma}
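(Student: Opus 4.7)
The plan is to handle the two assertions separately: local Lipschitz regularity follows at once from Lemma \ref{newlem4.3.2}, while monotonicity will come from comparing first variations of arc length in $M$ and in $\wt{M}$ using the angle hypothesis (\ref{lem4.3-angle}).

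For the Lipschitz property, observe that the side lengths of $\wt{\triangle}(p x(t) y(t))$ are $\tilde{a}(t) := t\,d(p,x)$, $\tilde{c}(t) := t\,d(p,y)$, and $\tilde{b}(t) := b(t) = d(x(t), y(t))$. The first two are linear in $t$, and $b(t)$ is Lipschitz with constant $d(p,x) + d(p,y)$ by the triangle inequality applied to the two geodesic edges. Writing $\theta(t) = \theta(\tilde{a}(t), \tilde{b}(t), \tilde{c}(t))$, Lemma \ref{newlem4.3.2} immediately yields local Lipschitz continuity of $\theta$ on $(0,1)$.

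For monotonicity, I would show $\theta'(t_0) \le 0$ at a.e.\ $t_0 \in (0,1)$. Set $a_1 := d(p,x)$, $c_1 := d(p,y)$. The first variation of arc length for $b(t) = d(x(t), y(t))$ in $M$ gives
\[
b'(t_0) = a_1 \cos \angle(p\,x(t_0)\,y(t_0)) + c_1 \cos \angle(p\,y(t_0)\,x(t_0)).
\]
In $\wt{M}$, fix geodesic polar coordinates at $\tilde{p}$ and place $\tilde{x}(t)$ on the meridian $\theta = 0$, so $\tilde{x}(t) = (ta_1, 0)$ and $\tilde{y}(t) = (tc_1, \theta(t))$. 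Then $\tilde{x}'(t_0) = a_1 \partial_t$ at $\tilde{x}(t_0)$ and $\tilde{y}'(t_0) = c_1 \partial_t + \theta'(t_0)\, \partial_\theta$ at $\tilde{y}(t_0)$. Decomposing the terminal velocity of a minimal geodesic $\tilde{\beta}$ from $\tilde{x}(t_0)$ to $\tilde{y}(t_0)$ in the orthonormal frame $\{\partial_t, f(t_0 c_1)^{-1}\partial_\theta\}$ (Lemma \ref{lem3.5} keeps $\tilde{\beta}$ in the sector $\{0 \le \theta \le \theta(t_0)\}$, giving the correct positive sign for the angular component), the first variation in $\wt{M}$ becomes
\[
\tilde{b}'(t_0) = a_1 \cos \tilde{\angle}_x + c_1 \cos \tilde{\angle}_y + \theta'(t_0)\, f(t_0 c_1)\, \sin \tilde{\angle}_y,
\]
with $\tilde{\angle}_x := \angle(\tilde{p}\,\tilde{x}(t_0)\,\tilde{y}(t_0))$ and $\tilde{\angle}_y := \angle(\tilde{p}\,\tilde{y}(t_0)\,\tilde{x}(t_0))$. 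Since $b \equiv \tilde{b}$ by the SSS matching (\ref{lem4.3-length}), equating the two expressions and rearranging gives
\[
\theta'(t_0)\, f(t_0 c_1)\, \sin \tilde{\angle}_y = a_1 \bigl[\cos \angle(p\,x(t_0)\,y(t_0)) - \cos \tilde{\angle}_x\bigr] + c_1 \bigl[\cos \angle(p\,y(t_0)\,x(t_0)) - \cos \tilde{\angle}_y\bigr].
\]
By hypothesis (\ref{lem4.3-angle}) and the monotonicity of $\cos$ on $[0, \pi]$, both bracketed expressions are $\le 0$, while the coefficient of $\theta'(t_0)$ on the left is strictly positive, because $f > 0$ and $\tilde{\angle}_y \in (0, \pi)$ (the comparison triangle is non-degenerate, as guaranteed by $\angle(xpy) < \pi$). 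Hence $\theta'(t_0) \le 0$, and the Lipschitz property together with non-positive a.e.\ derivative yields $\theta$ non-increasing on $(0, 1]$.

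The main obstacle I anticipate is the rigorous justification of the two first-variation identities at $t_0$: strictly speaking, the equality form requires uniqueness of the minimizing geodesic realizing each triangle, whereas for non-unique minimizers one a priori only obtains one-sided Dini inequalities. The Lipschitz regularity from step one reduces matters to a.e.\ differentiability of $b$ and $\theta$; at such $t_0$ one chooses the specific minimizing geodesic $\beta$ used to define the triangle in (\ref{lem4.3-angle}), and the matching Dini bounds from both directions in $M$ and $\wt{M}$ convert each inequality into the needed equality. A secondary technical point is to verify that the isometric placement of the comparison triangles can be taken continuously in $t$, so that the parametrization $\tilde{y}(t) = (t c_1, \theta(t))$ makes sense as a $C^0$ (in fact Lipschitz) curve.
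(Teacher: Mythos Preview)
Your proof is correct. The Lipschitz step is identical to the paper's, but your monotonicity argument is organized differently, and the difference is worth recording.

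The paper freezes the comparison configuration at $t_{0}$ by introducing the auxiliary function $\wt{\psi}(t):=d(\tilde{\mu}(at),\tilde{\eta}(ct))$ along the two \emph{fixed} meridians through $\tilde{x}(t_{0})$ and $\tilde{y}(t_{0})$. It then proves $\varphi'(t_{0})\le\wt{\psi}'(t_{0})$ by first variation together with the angle hypothesis (\ref{lem4.3-angle}), and finishes by contradiction: assuming $\theta'(t_{0})>0$, Lemma \ref{newlem4.3.1} is invoked to produce an inequality $\varphi(t)\ge\wt{\psi}(t)+(\delta f(ct)\sin\ve_{1})(t-t_{0})$, which forces $\varphi'(t_{0})>\wt{\psi}'(t_{0})$. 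Thus the paper separates the variation into a purely radial piece (handled by first variation) and an angular piece (handled by the quantitative estimate of Lemma \ref{newlem4.3.1}).

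You instead let the comparison point $\tilde{y}(t)=(tc_{1},\theta(t))$ move with its angular coordinate and compute the full first variation in $\wt{M}$ in one stroke, picking up the extra term $\theta'(t_{0})f(t_{0}c_{1})\sin\tilde{\angle}_{y}$. Equating with the first variation in $M$ and using (\ref{lem4.3-angle}) then gives $\theta'(t_{0})\le 0$ directly, with no auxiliary $\wt{\psi}$ and no contradiction step. The positivity of the coefficient $f(t_{0}c_{1})\sin\tilde{\angle}_{y}$ and the sign of the $\partial_{\theta}$-component of $\tilde{\beta}'(L)$ that you extract from Lemma \ref{lem3.5} are exactly what the paper packages as Lemma \ref{newlem4.3.1}; you are effectively using that lemma inline in its infinitesimal form. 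Your route is shorter and more transparent; the paper's has the minor advantage that $\wt{\psi}$ depends only on radial motion, so one never has to argue that the family $t\mapsto\tilde{y}(t)$ is a Lipschitz curve in $\wt{M}$ (your ``secondary technical point''), and the needed differentiability on the $\wt{M}$ side is automatic. Both concerns you flag in your last paragraph are genuine but resolvable exactly as you indicate: restrict to the full-measure set of $t_{0}$ where $b$ and $\theta$ are simultaneously differentiable, and note that at such points every minimizing segment yields the same first-variation value, so the specific edges of the triangles in (\ref{lem4.3-angle}) may be used.
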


\begin{proof}
Set 
\[
a := d(p, x), \quad b := d(x, y), \quad c := d(p, y).
\]
Since the edges $x,\,y : [0, 1] \lra M$ of $\triangle(pxy)$ 
are parametrized proportionally to arc-length, respectively, 
we have
\[
a t = d(p, x(t)), \quad c t = d(p, y(t))
\]
for all $t \in [0, 1]$. 
If we define a Lipschitz function on $[0, 1]$ as 
\[
\varphi (t) : = d(x(t), y(t)), 
\]
by the assumption (\ref{lem4.3-length}), 
the function $\theta (t)$ is equal to the function $\theta (at, \varphi (t), c t)$ 
by using the function $\theta(\,\cdot\,, \,\cdot\,,\,\cdot\,)$ defined in Lemma \ref{newlem4.3.2}. 
Hence, $\theta (t)$ is locally Lipschitz by Lemma \ref{newlem4.3.2}. 
By Dini's theorem \cite{D} (cf.\,\cite[Section 2.3]{Hw}, \cite[Theorem 7.29]{WZ}), 
the function $\theta (t)$ is differentiable for almost all $t \in (0, 1)$. 
Thus, we may take any fixed number $t_{0} \in (0, 1)$, 
at which $\theta$ is differentiable. 
By the assumption (\ref{lem4.3-angle}), 
\begin{equation}\label{lem4.3-17}
\angle (px(t_{0})y(t_{0})) \ge \angle (\tilde{p}\tilde{x}(t_{0})\tilde{y}(t_{0})), \quad
\angle (py(t_{0})x(t_{0})) \ge \angle (\tilde{p}\tilde{y}(t_{0})\tilde{x}(t_{0})).
\end{equation}
Let $\tilde{\mu}, \tilde{\eta} : [0, \infty) \lra \wt{M}$ be meridians emanating from $\tilde{p}$ 
and passing through 
$\tilde{x}(t_{0}) = \tilde{\mu} (at_{0})$, $\tilde{y}(t_{0}) = \tilde{\eta} (ct_{0})$, respectively. 
Then, we define a function 
\[
\wt{\psi}(t) := d(\tilde{\mu} (at), \tilde{\eta} (ct))
\]
on $(0, \infty)$. 
Remark that 
\begin{equation}\label{lem4.3-18}
\wt{\psi}(t_{0}) = \varphi (t_{0})
\end{equation}
holds, since 
\[
d(\tilde{\mu} (at_{0}), \tilde{\eta} (ct_{0})) = d(\tilde{x}(t_{0}), \tilde{y}(t_{0})) = d(x(t_{0}), y(t_{0})).
\]
Since $\varphi (t)$ and $\wt{\psi}(t)$ are Lipschitz functions, respectively, 
both functions are differentiable almost everywhere. 
Therefore, we may assume that $\varphi (t)$ and $\wt{\psi}(t)$ are also differentiable at $t = t_{0}$. 
Then, 
\begin{equation}\label{lem4.3-19}
\varphi' (t_{0}) \le \wt{\psi}'(t_{0})
\end{equation}
holds. 
To see (\ref{lem4.3-19}), 
let $q_{0} \in M$ be the midpoint on the edge $x(t_{0})y(t_{0})$ of $\triangle(px(t_{0})y(t_{0}))$, 
and let $\sigma, \tau : [0, \varphi (t_{0})/2] \lra x(t_{0})y(t_{0})$ be minimal geodesic segments 
joining $q_{0} = \sigma (0) = \tau(0)$ to 
$x(t_{0}) = \sigma (\varphi (t_{0})/2)$, $y(t_{0}) = \tau (\varphi (t_{0})/2)$, respectively. 
Then, we see 
\begin{equation}\label{lem4.3-20-1}
\angle (\sigma' (\varphi (t_{0})/2), x'(t_{0})) = \angle (px(t_{0})y(t_{0}))
\end{equation}
and
\begin{equation}\label{lem4.3-20-2}
\angle (\tau' (\varphi (t_{0})/2), y'(t_{0})) = \angle (py(t_{0})x(t_{0})).
\end{equation}
For a sufficiently small fixed number $\ve >0$, 
consider two geodesic variations of $\sigma$ and $\tau$ whose variational curves join $q_{0}$ to $x(t)$, $y(t)$ 
for $t \in (t_{0} - \ve, t_{0} + \ve)$, respectively. 
Since $d(q_{0}, x(t))$ and $d(q_{0}, y(t))$ are differentiable at $t = t_{0}$, 
it follows from the triangle inequality, 
the first variation formula, (\ref{lem4.3-20-1}), and (\ref{lem4.3-20-2}) that we have 
\begin{align}\label{lem4.3-21}
\varphi' (t_{0}) 
&= \lim_{t \downarrow t_{0}} \frac{\varphi(t) - \varphi(t_{0})}{t - t_{0}} \notag\\[2mm]
&\le \lim_{t \downarrow t_{0}} \frac{d(q_{0}, x(t)) + d(q_{0}, y(t)) - d(q_{0}, x(t_{0})) - d(q_{0}, y(t_{0}))}{t - t_{0}} \notag\\[2mm]
&= \lim_{t \downarrow t_{0}} \frac{d(q_{0}, x(t)) - d(q_{0}, x(t_{0}))}{t - t_{0}} 
+ \lim_{t \downarrow t_{0}} \frac{d(q_{0}, y(t)) - d(q_{0}, y(t_{0}))}{t - t_{0}}\notag\\[2mm]
&= \cos \left( \angle (px(t_{0})y(t_{0})) \right) + \cos \left( \angle (py(t_{0})x(t_{0})) \right), 
\end{align}
and  
\begin{align}\label{lem4.3-22}
\varphi' (t_{0}) 
&= \lim_{t \uparrow t_{0}} \frac{\varphi(t) - \varphi(t_{0})}{t - t_{0}} \notag\\[2mm]
&\ge \lim_{t \uparrow t_{0}} \frac{d(q_{0}, x(t)) + d(q_{0}, y(t)) - d(q_{0}, x(t_{0})) - d(q_{0}, y(t_{0}))}{t - t_{0}} \notag\\[2mm]
&= \cos \left( \angle (px(t_{0})y(t_{0})) \right) + \cos \left( \angle (py(t_{0})x(t_{0})) \right). 
\end{align}
Hence, by (\ref{lem4.3-21}) and (\ref{lem4.3-22}), we get 
\begin{equation}\label{lem4.3-23}
\varphi' (t_{0}) = \cos \left( \angle (px(t_{0})y(t_{0})) \right) + \cos \left( \angle (py(t_{0})x(t_{0})) \right).
\end{equation}
By the same way above, we see
\begin{equation}\label{lem4.3-24}
\wt{\psi}'(t_{0}) = \cos \left( \angle (\tilde{p}\tilde{x}(t_{0})\tilde{y}(t_{0})) \right) 
+ \cos \left( \angle (\tilde{p}\tilde{y}(t_{0})\tilde{x}(t_{0})) \right).
\end{equation}
Thus, by (\ref{lem4.3-17}), (\ref{lem4.3-23}), and (\ref{lem4.3-24}), 
we get (\ref{lem4.3-19}), that is, 
\begin{align}
\varphi' (t_{0}) 
&= \cos \left( \angle (px(t_{0})y(t_{0})) \right) + \cos \left( \angle (py(t_{0})x(t_{0})) \right)\notag\\[2mm]
&\le \cos \left( \angle (\tilde{p}\tilde{x}(t_{0})\tilde{y}(t_{0})) \right) 
+ \cos \left( \angle (\tilde{p}\tilde{y}(t_{0})\tilde{x}(t_{0})) \right) 
= \wt{\psi}'(t_{0}).\notag 
\end{align}
Therefore, we get 
\[
\theta'(t_{0}) \le 0
\]
by (\ref{lem4.3-19}). 
Indeed, 
suppose that 
\[
\theta'(t_{0}) > 0.
\]
Then, there exists a constant $\delta > 0$ such that
\begin{equation}\label{lem4.3-25}
\theta (t) - \theta (t_{0}) \ge \delta (t - t_{0})
\end{equation}
holds for all $t > t_{0}$ sufficiently close to $t_{0}$. 
By considering the point $\tilde{\mu}(at)$ as the vertex $\tilde{x}(t)$ of the geodesic triangle 
$\triangle(\tilde{p}\tilde{x}(t)\tilde{y}(t))$ corresponding 
to the geodesic triangle $\triangle(px(t)y(t))$,  
it follows from Lemma \ref{newlem4.3.1} and (\ref{lem4.3-25}) that 
there exists a constant $\ve_{1} \in (0, \pi / 2)$ such that  
\begin{align}\label{lem4.3-26}
\varphi (t) = d(x(t), y(t)) 
&= d(\tilde{\mu}(at), \tilde{y}(t))\notag\\[2mm]
&\ge d(\tilde{\mu}(at), \tilde{\eta}(ct)) + (f(ct) \sin \ve_{1}) (\theta (t) - \theta (t_{0}))\notag\\[2mm]
&\ge \wt{\psi}(t) + (\delta f(ct) \sin \ve_{1}) (t - t_{0})
\end{align}
for all $t > t_{0}$ sufficiently close to $t_{0}$. 
By (\ref{lem4.3-18}) and (\ref{lem4.3-26}), 
\begin{equation}\label{lem4.3-27}
\varphi (t) - \varphi (t_{0}) \ge \wt{\psi}(t) - \wt{\psi}(t_{0}) + (\delta f(ct) \sin \ve_{1}) (t - t_{0}).
\end{equation}
Hence, by (\ref{lem4.3-27}), we get 
\[
\varphi' (t_{0}) \ge \wt{\psi}'(t_{0}) + \delta f(ct) \sin \ve_{1} >  \wt{\psi}'(t_{0}). 
\]
This is a contradiction, 
since $\varphi' (t_{0}) \le \wt{\psi}'(t_{0})$. 
Thus, $\theta'(t) \le 0$ for almost all $t \in (0, 1)$. 
This implies that $\theta (t)$ is non-increasing on $(0, 1]$.
$\qedd$
\end{proof}

\begin{remark}
If the function $\theta(t)$ in Lemma \ref{lem4.3} is not locally Lipschitz, 
then we can not conclude that $\theta$ is non-increasing. 
For example, 
the Cantor--Lebesgue function is increasing on $[0, 1]$ and 
its derivative function is zero almost everywhere. 
By making use of this function, 
we may construct a function which is not increasing, or decreasing, 
but its derivative function is zero almost everywhere (cf.\,\cite{WZ}). 
\end{remark}

\subsection{Toponogov Comparison Theorem}

We are going to show our Toponogov comparison theorem (Theorem \ref{thm4.8}). 
We first introduce the definition of a narrow geodesic triangle 
in an arbitrary complete Riemannian manifold with a base point.

\begin{definition}\label{def4.1}{\rm (\cite[Section 2]{IMS})}
Let $M$ be a complete Riemannian manifold with a base point $p \in M$. 
A geodesic triangle $\triangle(pxy)$ in $M$ is called a {\em narrow geodesic triangle}, 
if 
\[
d(x, y) \ll \min \{ d(p, x),\,d(p, y) \}
\]
and the Fermi coordinates around the edge $px$ 
(respectively $py$) contains the edge $py$ (respectively $px$).   
\end{definition}

From the Rauch--Berger comparison theorem, 
we have the following lemma on a narrow geodesic triangle. 

\begin{lemma}{\rm (\cite[Lemma 2.2]{IMS})}\label{lem4.2}
Let $(M,p)$ be a complete open Riemannian $n$-manifold $M$ 
whose radial curvature at the base point $p$ is bounded from below by
that of a non-compact model surface of revolution $(\wt{M}, \tilde{p})$. 
Then, for every narrow geodesic triangle $\triangle(pxy)$ in $M$,
there exists a geodesic triangle 
$\wt{\triangle} (pxy) :=\triangle(\tilde{p}\tilde{x}\tilde{y})$ in $\wt{M}$ such that
\begin{equation}\label{lem4.2-length}
d(\tilde{p},\tilde{x})=d(p,x), \quad d(\tilde{p},\tilde{y})=d(p,y), \quad d(\tilde{x},\tilde{y})=d(x,y) 
\end{equation}
and that
\begin{equation}\label{lem4.2-angle}
\angle (pxy) \ge \angle (\tilde{p}\tilde{x}\tilde{y}), \quad
\angle (pyx) \ge \angle (\tilde{p}\tilde{y}\tilde{x}).
\end{equation}
\end{lemma}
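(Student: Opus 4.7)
The plan is to construct the comparison triangle in $\wt{M}$ explicitly using Lemma \ref{lem3.5} and then derive the two angle inequalities by a hinge-type application of the Rauch--Berger comparison theorem, with the narrowness of $\triangle(pxy)$ serving to keep every relevant geodesic and Jacobi field in a regime where the comparison is valid.

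First I would construct $\wt\triangle(pxy)$. Setting $a := d(p,x)$, $b := d(x,y)$, $c := d(p,y)$, the narrowness of $\triangle(pxy)$ gives $|a-c| < b \ll \min\{a,c\}$, so by Lemma \ref{lem3.5} the strictly increasing function $\theta \mapsto d(\tilde{\mu}_0(a), \tilde{\mu}_\theta(c))$ takes the value $b$ at a unique $\theta_0 \in (0,\pi)$. Define $\tilde{x} := \tilde{\mu}_0(a)$, $\tilde{y} := \tilde{\mu}_{\theta_0}(c)$, and let the third edge be the (unique, since $b$ is small) minimal geodesic joining $\tilde{x}$ to $\tilde{y}$. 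This triangle satisfies the side-length conditions (\ref{lem4.2-length}).

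Next I would establish $\angle(pxy) \ge \angle(\tilde{p}\tilde{x}\tilde{y})$; the inequality at $y$ is symmetric. Let $\gamma:[0,a]\to M$ be the edge from $p$ to $x$ and $\alpha:[0,b]\to M$ the edge from $x$ to $y$, both unit-speed; set $\theta := \angle(pxy)$, the angle between $-\gamma'(a)$ and $\alpha'(0)$. In $\wt{M}$, for each $\phi \in [0,\pi]$ let $\tilde{\alpha}_\phi:[0,b] \to \wt{M}$ denote the unit-speed geodesic from $\tilde{x}$ whose initial velocity makes angle $\phi$ with $-\tilde{\gamma}'(a)$. The narrowness hypothesis ensures that the family $\{\gamma_s\}_{s\in[0,b]}$ of minimal geodesics from $p$ to $\alpha(s)$ is a smooth geodesic variation of $\gamma_0 = \gamma$ contained in the Fermi neighborhood of $\gamma$; its variation field is a Jacobi field along each $\gamma_s$. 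Applying Rauch--Berger along these radial geodesics, using the hypothesis $K_M(\sigma_t) \ge G(\tilde{\gamma}(t))$ on planes containing the radial direction, produces the hinge inequality
\[
d(p, \alpha(s)) \;\le\; d(\tilde{p}, \tilde{\alpha}_\theta(s)) \qquad \text{for every } s \in [0, b].
\]
Evaluating at $s = b$ gives $c = d(p,y) \le d(\tilde{p}, \tilde{\alpha}_\theta(b))$. On the other hand, the construction of $\wt\triangle(pxy)$ forces $c = d(\tilde{p}, \tilde{y}) = d(\tilde{p}, \tilde{\alpha}_{\tilde\theta}(b))$ with $\tilde\theta := \angle(\tilde{p}\tilde{x}\tilde{y})$. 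Since the map $\phi \mapsto d(\tilde{p}, \tilde{\alpha}_\phi(b))$ is strictly increasing on $(0,\pi)$ (a consequence of Lemma \ref{lem3.5} together with a first variation argument valid for $b$ small), the distance inequality yields $\tilde\theta \le \theta$, as required.

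The main obstacle I anticipate is the careful implementation of the Rauch--Berger comparison in the radial-curvature setting. Unlike the classical constant-curvature case, the curvature $G$ encountered by a non-radial comparison geodesic $\tilde{\alpha}_\phi$ varies with the radial coordinate of its current point, so the pointwise hypothesis $K_M \ge G \circ \tilde{\gamma}$, available only along radial geodesics in $M$, must be transferred into a comparison estimate along every $\gamma_s$. The narrowness of $\triangle(pxy)$, and in particular the fact that $py$ lies inside the Fermi neighborhood of $px$, is exactly what permits this transfer: it keeps the variation $\{\gamma_s\}$ smooth on the full interval, allows Berger's generalization (which handles initial Jacobi vectors not orthogonal to $\gamma_s'$), and prevents any interference from conjugate or cut points of $p$ along the relevant segments. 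Without this narrow-regime control, the hinge comparison would fail in general -- which is precisely why the full Toponogov comparison (Theorem \ref{thm4.8}) will later require the additional Alexandrov convexity and sector hypotheses.
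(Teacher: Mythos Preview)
The paper does not supply a proof of this lemma; it simply attributes the result to \cite[Lemma~2.2]{IMS} and states that it follows ``from the Rauch--Berger comparison theorem,'' with the remark that an alternative proof appears in \cite{KT2}. Your sketch is therefore consistent with the route the paper points to: you invoke Rauch--Berger, construct the comparison triangle via Lemma~\ref{lem3.5}, and correctly identify that the narrowness hypothesis (each radial edge lying in the Fermi tube of the other) is precisely what keeps the comparison valid.

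Two places where your outline is looser than what a full argument (as in \cite{IMS}) would require. First, the hinge inequality $d(p,\alpha(s)) \le d(\tilde p,\tilde\alpha_\theta(s))$ is the crux, and it is not a one-line consequence of applying Rauch--Berger ``along the radial geodesics $\gamma_s$'': the radial geodesics $\gamma_s$ in $M$ and their model counterparts $\tilde\gamma_s$ have \emph{a priori} different lengths (that difference is exactly what you are trying to control), so a naive Jacobi-field comparison does not close up. The argument in \cite{IMS} instead works in Fermi coordinates about the edge $px$ and uses Berger's version of the comparison (Rauch~II, for Jacobi fields with nonzero initial value) to compare the metric coefficients directly; this is why the Fermi-tube condition appears in Definition~\ref{def4.1}. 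Second, your monotonicity claim for $\phi \mapsto d(\tilde p,\tilde\alpha_\phi(b))$ is correct for $b$ small by a first-variation computation, but Lemma~\ref{lem3.5} alone does not deliver it, since that lemma varies the $\theta$-coordinate along a fixed parallel rather than a hinge angle at $\tilde x$. These are refinements rather than gaps: your overall strategy matches the Rauch--Berger framework the paper invokes.
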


\medskip

\begin{remark}
Another proof of Lemma \ref{lem4.2} are found in \cite{KT2}. 
Here, from radial curvature geometry's standpoint, 
we establish the Toponogov comparison theorem for open triangles on 
complete manifolds with boundary. 
In particular, it will be clarified in \cite{KT2} that the cut locus of a complete open Riemannian manifold $M$ is not an obstruction at all when we draw a corresponding geodesic triangle in a model surface of revolution for each geodesic triangle in the manifold $M$.
\end{remark} 

Then, we prove the next lemma by using Lemmas \ref{lem3.5} and \ref{lem4.2}.

\begin{lemma}\label{lem4.5}
Let $(M,p)$ be a complete open Riemannian $n$-manifold $M$ 
whose radial curvature at the base point $p$ is bounded from below by
that of a non-compact model surface of revolution $(\wt{M}, \tilde{p})$. 
If the geodesic triangle $\triangle(pxy)$ in $M$ admits a geodesic triangle 
$\wt{\triangle} (pxy) := \triangle (\tilde{p}\tilde{x}\tilde{y})$ in $\wt{V}(\delta_{0})$ 
for some $\delta_{0} \in (0, \pi)$ satisfying 
\begin{equation}\label{lem4.5-length}
d(\tilde{p},\tilde{x})=d(p,x), \quad d(\tilde{p},\tilde{y})=d(p,y), \quad d(\tilde{x},\tilde{y})=d(x,y),  
\end{equation}
then, for any $s \in (0, \ell)$, 
the geodesic triangle $\triangle(px\sigma(s))$ has a geodesic triangle 
$\wt{\triangle}(px\sigma(s))$ in $\wt{V}(\delta_{0})$ satisfying 
(\ref{lem4.5-length}) for $y = \sigma(s)$, 
where $\sigma : [0, \ell] \lra M$ is 
the minimal geodesic segment joining $\sigma(0) = x$ to $\sigma(\ell) = y$. 
\end{lemma}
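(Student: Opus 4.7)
The plan is to construct the sub-comparison triangle in $\wt{V}(\delta_{0})$ by invoking the rotational symmetry of $\wt{M}$ via an intermediate value theorem, then to verify sector containment through a continuity argument in the parameter $s$.

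First I would check that the three target side lengths $a := d(p,x)$, $b := d(p, \sigma(s))$, $c := s = d(x, \sigma(s))$ satisfy the strict triangle inequalities in $M$; this is immediate because $\sigma$ is minimal and $\triangle(p x \sigma(s))$ is non-degenerate for $s \in (0, \ell)$. Next I would construct $\tilde{z}$ in $\wt{M}$ as follows: place $\tilde{p}$ at the pole and $\tilde{x}$ on the same meridian $\tilde{\mu}$ used for the given $\wt{\triangle}(pxy)$, so that $\tilde{x} \in \wt{V}(\delta_{0})$; then look for $\tilde{z}$ on the parallel $\{t = b\}$, on the same side of $\tilde{\mu}$ as $\tilde{y}$, at distance $c$ from $\tilde{x}$. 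By Lemma \ref{lem3.5} the map $\theta \mapsto d(\tilde{x}, (b, \theta))$ is continuous and strictly increasing on $(0, \pi]$, with limit $|a-b|$ as $\theta \downarrow 0$ and value $d(\tilde{\mu}_{0}(a), \tilde{\mu}_{\pi}(b))$ at $\theta = \pi$. Since $c$ lies in this range, the intermediate value theorem yields $\theta_{*} \in (0, \pi]$ with $d(\tilde{x}, (b, \theta_{*})) = c$, so $\tilde{z} := (b, \theta_{*})$ furnishes a geodesic triangle in $\wt{M}$ satisfying the length equalities (\ref{lem4.5-length}) for $y = \sigma(s)$.

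The crucial step is to verify $\tilde{z} \in \wt{V}(\delta_{0})$. I would form the continuous one-parameter family $s \mapsto \tilde{z}(s)$ on $(0, \ell]$ and consider $S := \{s \in (0, \ell] : \tilde{z}(s) \in \wt{V}(\delta_{0})\}$. The hypothesis gives $\ell \in S$, and openness of $S$ follows from continuity of $\tilde{z}(s)$. For closedness in $(0, \ell]$, take $s_{n} \in S$ with $s_{n} \to s_{0}$; by continuity $\tilde{z}(s_{n}) \to \tilde{z}(s_{0}) \in \overline{\wt{V}(\delta_{0})}$, and one must exclude the two boundary cases. The case $\theta(\tilde{z}(s_{0})) = 0$ is ruled out because $c > |a - b|$ forces $\theta_{*} > 0$ strictly. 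The harder case $\theta(\tilde{z}(s_{0})) = \delta_{0}$ would place $\tilde{z}(s_{0})$ on the boundary meridian of $\wt{V}(\delta_{0})$; I would rule it out by comparing the segment $\tilde{x}\tilde{z}(s_{0})$ of length $s_{0} \le \ell$ against the existing edge $\tilde{\sigma}$ of $\wt{\triangle}(pxy)$ via the Clairaut relation (\ref{clairaut}), exploiting that $\tilde{\sigma} \subset \wt{V}(\delta_{0})$ already consumes the full length $\ell$ while remaining strictly inside the sector.

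The main obstacle is precisely this last exclusion. Making the comparison of angular excursions at different lengths and Clairaut constants on a general model surface quantitative will take some care. I expect the argument to invoke Lemma \ref{newlem4.3.1} to control the rate of angular change in terms of arc length and radial coordinate, ultimately deriving a contradiction with the triangle inequality relating $a$, $b$, and $s_{0}$ were $\theta(\tilde{z}(s_{0}))$ allowed to reach $\delta_{0}$.
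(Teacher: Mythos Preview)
Your continuity/IVT construction of $\tilde z(s)$ is fine, but the argument has a genuine gap at exactly the point you flag as ``the main obstacle'': you never use the radial curvature hypothesis on $(M,p)$, and without it the boundary exclusion $\theta(\tilde z(s_0))<\delta_0$ cannot be closed. The only data your argument extracts from $M$ are the side lengths $a=d(p,x)$, $b(s)=d(p,\sigma(s))$, $c=s$, constrained merely by the triangle inequalities $|a-b(s)|\le s$ and $|b(s)-d(p,y)|\le \ell-s$. For an arbitrary function $b(s)$ obeying just these constraints there is no reason the comparison angle $\theta_*(s)$ should stay below $\theta_*(\ell)<\delta_0$; the Clairaut relation and Lemma~\ref{newlem4.3.1} are statements purely about $\wt M$ and give you no handle on $b(s)$. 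Your sketch of the exclusion therefore cannot be completed as stated.

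The paper's proof supplies exactly the missing ingredient: it invokes Lemma~\ref{lem4.2} (the narrow-triangle comparison, which is where the radial curvature bound enters) to subdivide $[0,\ell]$ into small pieces $[s_i,s_{i+1}]$, each admitting a comparison triangle $\wt\triangle(p\,\sigma(s_i)\,\sigma(s_{i+1}))$. Laying these triangles side by side in $\wt M$ produces a broken geodesic $\tilde\eta$ from $\tilde x$ to a point $\tilde z_k$ with $L(\tilde\eta)=\ell=d(\tilde x,\tilde y)$. If $\sup S<\ell$ forced the total angle $\sum\angle(\tilde z_{i-1}\tilde p\tilde z_i)$ to exceed $\angle(\tilde x\tilde p\tilde y)<\delta_0$, then Lemma~\ref{lem3.5} together with a case split on whether this sum is $\le\pi$ or $>\pi$ yields $L(\tilde\eta)>d(\tilde x,\tilde y)$, a contradiction. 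The subdivision via Lemma~\ref{lem4.2} is the essential idea your proposal is missing; you should either import it or find some other way to feed the curvature bound into control of $b(s)$.
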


\begin{proof}
We consider the set $S$ consisting of all $s \in (0, \ell)$ such that, 
for any $r \in (0, s)$, 
there exists a geodesic triangle 
$\wt{\triangle} (px\sigma(r)) := \triangle (\tilde{p}\tilde{x}\tilde{\sigma}(r)) \subset \wt{V}(\delta_{0})$ corresponding to the triangle $\triangle(px\sigma(r)) \subset M$ satisfying 
(\ref{lem4.5-length}) for $y = \sigma(r)$. 
It is clear from Lemma \ref{lem4.2} that $S$ is non-empty set. 
By supposing 
\[
s_{1} := \sup S < \ell, 
\]
we will get a contradiction. 
By definition, 
there exists a geodesic triangle $\wt{\triangle} (px\sigma(s_{1})) \subset \wt{M}$ 
such that 
\begin{equation}\label{lem4.5-1}
\angle(\tilde{x} \tilde{p} \tilde{\sigma}(s_{1})) \ge \delta_{0}
\end{equation}
and (\ref{lem4.5-length}) is valid for $y = \sigma(s_{1})$. 
From Lemma \ref{lem4.2}, 
we may take a subdivision 
\[
0 = s_{0} < s_{1} < \cdots < s_{k} = \ell
\]
of $[0, \ell]$ containing the $s_{1}$ such that all the triangles 
\[
\triangle (p z_{i}z_{i + 1}) 
:= \triangle (p \sigma (s_{i}) \sigma (s_{i + 1})) \subset M
\]
have corresponding triangles $\wt{\triangle} (p z_{i}z_{i + 1}) \subset \wt{M}$
satisfying (\ref{lem4.5-length}) for $x = z_{i}$ and $y = z_{i + 1}$, respectively. 
Here, we set $z_{i} := \sigma(s_{i})$. 
Under this situation, for $i = 1$, 
we draw $\wt{\triangle} (p z_{1} z_{2})$ on $\wt{M}$, 
which is adjacent to $\wt{\triangle} (p z_{0} z_{1})$, 
so as to have a common edge $\tilde{p} \tilde{z}_{1}$. 
Inductively, we draw $\wt{\triangle} (p z_{i} z_{i + 1})$ on $\wt{M}$, 
which is adjacent to $\wt{\triangle} (p z_{i - 1} z_{i})$, 
so as to have a common edge $\tilde{p} \tilde{z}_{i}$. 
Hence, we get a broken geodesic $\tilde{\eta}$ 
emanating from $\tilde{x} = \tilde{z}_{0}$ to $\tilde{z}_{k}$, 
which consists of the opposite sides $\tilde{z}_{i}\tilde{z}_{i + 1}$ to $\tilde{p}$. 
It is trivial that the length $L(\tilde{\eta})$ of $\tilde{\eta}$ is equal to 
\begin{equation}\label{lem4.5-2}
{L(\tilde{\eta}) = \sum_{i = 1}^{k}d(\tilde{z}_{i - 1}, \tilde{z}_{i}) = d(x, y)}_.
\end{equation}
Suppose that the sum of angles 
\[
{\sum_{i = 1}^{k} \angle (\tilde{z}_{i - 1} \tilde{p} \tilde{z}_{i}) \le \pi}_.
\]
Since 
\[
\sum_{i = 1}^{k} \angle (\tilde{z}_{i - 1} \tilde{p} \tilde{z}_{i}) 
> \angle (\tilde{x} \tilde{p} \tilde{\sigma}(s_{1}))
\ge \delta_{0} > \angle (\tilde{x} \tilde{p} \tilde{y})
\]
by (\ref{lem4.5-1}), 
it follows from Lemma \ref{lem3.5} and (\ref{lem4.5-2}) that 
\[
d(\tilde{x}, \tilde{y}) < d(\tilde{x}, \tilde{z}_{k}) \le L(\tilde{\eta}) = d(x, y).
\]
This is a contradiction, 
since $d(\tilde{x}, \tilde{y}) = d(x, y)$. 
Hence, we see that the sum of angles
\[
{\sum_{i = 1}^{k} \angle (\tilde{z}_{i - 1} \tilde{p} \tilde{z}_{i}) > \pi}_.
\]
Without loss of generality, 
we may assume that $\theta(\tilde{x}) = 0$ 
and $\theta \circ \tilde{\eta}$ is increasing. 
Since $\tilde{\eta}$ intersects the meridian $\tilde{\mu}_{\pi}$ defined by $\theta = \pi$, 
we get a unique intersection $\tilde{w}_{0}$. 
The point $\tilde{w}_{0}$ divides $\tilde{\eta}$ 
into two broken geodesics $\tilde{\eta}_{1}$ and $\tilde{\eta}_{2}$. 
Hence, we assume that the subarc emanating from $\tilde{x} = \tilde{z}_{0}$ 
to $\tilde{w}_{0}$ is $\tilde{\eta}_{1}$. 
Since the point $\tilde{\mu}_{\pi}(d(\tilde{p}, \tilde{y}))$ is the nearest point to $\tilde{w}_{0}$ 
on the parallel $t = d(\tilde{p}, \tilde{y})$, 
we have 
\begin{equation}\label{lem4.5-3}
L(\tilde{\eta}) = L(\tilde{\eta}_{1}) + L(\tilde{\eta}_{2}) 
> L(\tilde{\eta}_{1}) + d(\tilde{w}_{0}, \tilde{\mu}_{\pi}(d(\tilde{p}, \tilde{y}))).
\end{equation}
Thus, the broken geodesic $\tilde{\xi}$ consisting of $\tilde{\eta}_{1}$ and the minimal geodesic 
segment joining $\tilde{w}_{0}$ to $\tilde{\mu}_{\pi}(d(\tilde{p}, \tilde{y}))$ is shorter than $\tilde{\eta}$. 
Since 
\[
L(\tilde{\xi}) \ge d(\tilde{x}, \tilde{\mu}_{\pi}(d(\tilde{p}, \tilde{y}))), 
\]
we have, by (\ref{lem4.5-3}), 
\begin{equation}\label{lem4.5-4}
L(\tilde{\eta}) > d(\tilde{x}, \tilde{\mu}_{\pi}(d(\tilde{p}, \tilde{y}))).
\end{equation}
From Lemma \ref{lem3.5}, 
\begin{equation}\label{lem4.5-5}
d(\tilde{x}, \tilde{\mu}_{\pi}(d(\tilde{p}, \tilde{y}))) > d(\tilde{x}, \tilde{y})
\end{equation}
holds, since $\pi > \delta_{0} > \angle (\tilde{x} \tilde{p} \tilde{y})$. 
Hence, by (\ref{lem4.5-4}) and (\ref{lem4.5-5}), we have 
\begin{equation}\label{lem4.5-6}
L(\tilde{\eta}) > d(\tilde{x}, \tilde{y}). 
\end{equation}
Since $L(\tilde{\eta}) = d(x, y) = d(\tilde{x}, \tilde{y})$, 
we get a contradiction from (\ref{lem4.5-6}). 
Therefore, the proof is complete.
$\qedd$
\end{proof}

\begin{lemma}\label{lem4-new.5.31}\ 
Assume that a non-compact model surface of revolution $(\wt{M}, \tilde{p})$ with a metric (\ref{polar}) 
admits a sector $\wt{V}(\delta_{0})$ for some $\delta_{0} \in (0, \pi)$ which has no pair of cut points. 
Let $\triangle (\tilde{p}\tilde{x}_{1}\tilde{y}_{1})$, $\triangle (\tilde{p}\tilde{x}_{2}\tilde{y}_{2})$ be geodesic triangles in $\wt{M}$ satisfying 
\begin{equation}\label{lem4-new.5.31-con4}
d(\tilde{p}, \tilde{y}_{1}) = d(\tilde{p}, \tilde{x}_{2}), 
\end{equation}
and 
\begin{equation}\label{lem4-new.5.31-con3}
\angle (\tilde{p}\tilde{y}_{1}\tilde{x}_{1}) + \angle (\tilde{p}\tilde{x}_{2}\tilde{y}_{2}) \le \pi.  
\end{equation}
If there exists a geodesic triangle 
$\triangle (\tilde{p}\tilde{x}\tilde{y})$ in $\wt{V}(\delta_{0})$ such that 
\begin{equation}\label{lem4-new.5.31-con1}
d(\tilde{p}, \tilde{x}) = d(\tilde{p}, \tilde{x}_{1}), \quad d(\tilde{p}, \tilde{y}) = d(\tilde{p}, \tilde{y}_{2}),
\end{equation}
and that 
\begin{equation}\label{lem4-new.5.31-con2}
d(\tilde{x}, \tilde{y}) = d(\tilde{x}_{1}, \tilde{y}_{1}) + d(\tilde{x}_{2}, \tilde{y}_{2}),  
\end{equation}
then, 
\begin{equation}\label{lem4-new.5.31-con5}
\angle (\tilde{p}\tilde{x}_{1}\tilde{y}_{1}) \ge \angle (\tilde{p}\tilde{x}\tilde{y}), \quad 
\angle (\tilde{p}\tilde{y}_{2}\tilde{x}_{2}) \ge \angle (\tilde{p}\tilde{y}\tilde{x}).
\end{equation}
\end{lemma}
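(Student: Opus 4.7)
I would subdivide the opposite edge of the given triangle $\triangle(\tilde{p}\tilde{x}\tilde{y})$ to mirror the gluing of the two original triangles along the common radial distance. Let $\tilde{\sigma}:[0,d(\tilde{x},\tilde{y})]\to\wt{V}(\delta_{0})$ denote the minimal geodesic from $\tilde{x}$ to $\tilde{y}$, and set $\tilde{w}:=\tilde{\sigma}(d(\tilde{x}_{1},\tilde{y}_{1}))$, so that $d(\tilde{x},\tilde{w})=d(\tilde{x}_{1},\tilde{y}_{1})$ and $d(\tilde{w},\tilde{y})=d(\tilde{x}_{2},\tilde{y}_{2})$ by (\ref{lem4-new.5.31-con2}). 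By Lemma \ref{lem4.5}, the two sub-triangles $\triangle(\tilde{p}\tilde{x}\tilde{w})$ and $\triangle(\tilde{p}\tilde{w}\tilde{y})$ realize themselves in $\wt{V}(\delta_{0})$. Since $\tilde{w}$ is an interior point of the minimizing $\tilde{\sigma}$,
\begin{equation}\label{plan-supp}
\angle(\tilde{p}\tilde{w}\tilde{x})+\angle(\tilde{p}\tilde{w}\tilde{y})=\pi,
\end{equation}
and $\angle(\tilde{p}\tilde{x}\tilde{w})=\angle(\tilde{p}\tilde{x}\tilde{y})$, $\angle(\tilde{p}\tilde{y}\tilde{w})=\angle(\tilde{p}\tilde{y}\tilde{x})$. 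Thus $\triangle(\tilde{p}\tilde{x}\tilde{w})$ shares the hinge side-lengths $d(\tilde{p},\tilde{x})=d(\tilde{p},\tilde{x}_{1})$ and $d(\tilde{x},\tilde{w})=d(\tilde{x}_{1},\tilde{y}_{1})$ with $\triangle(\tilde{p}\tilde{x}_{1}\tilde{y}_{1})$, and similarly $\triangle(\tilde{p}\tilde{w}\tilde{y})$ shares $d(\tilde{w},\tilde{y})=d(\tilde{x}_{2},\tilde{y}_{2})$ and $d(\tilde{p},\tilde{y})=d(\tilde{p},\tilde{y}_{2})$ with $\triangle(\tilde{p}\tilde{x}_{2}\tilde{y}_{2})$.

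The plan is to derive both desired inequalities in (\ref{lem4-new.5.31-con5}) from the single length inequality
\begin{equation}\label{plan-length}
d(\tilde{p},\tilde{w})\le d(\tilde{p},\tilde{y}_{1})=d(\tilde{p},\tilde{x}_{2}),
\end{equation}
via hinge monotonicity in $\wt{M}$: for geodesic triangles $\triangle(\tilde{p}\tilde{u}\tilde{v})$ with two prescribed hinge side-lengths $d(\tilde{p},\tilde{u})$ and $d(\tilde{u},\tilde{v})$, strict enlargement of the third side $d(\tilde{p},\tilde{v})$ is equivalent to strict enlargement of the hinge angle $\angle(\tilde{p}\tilde{u}\tilde{v})$ and strict diminution of the base angle $\angle(\tilde{p}\tilde{v}\tilde{u})$. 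On $\wt{M}$ this follows from the first variation of arc length together with the absence of conjugate points on minimizers inside $\wt{V}(\delta_{0})$---guaranteed by the no-cut-pair hypothesis (in the spirit of Lemma \ref{newlem4.3.1}); the rotational symmetry of $\wt{M}$ then allows me to transport the original triangles so that the relevant hinge families remain inside a no-cut-pair region.

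To establish (\ref{plan-length}), I would argue by contradiction. Suppose $d(\tilde{p},\tilde{w})>d(\tilde{p},\tilde{y}_{1})$. Hinge monotonicity at the \emph{base} vertices gives
\[
\angle(\tilde{p}\tilde{w}\tilde{x})<\angle(\tilde{p}\tilde{y}_{1}\tilde{x}_{1}),\qquad\angle(\tilde{p}\tilde{w}\tilde{y})<\angle(\tilde{p}\tilde{x}_{2}\tilde{y}_{2}).
\]
Summing these and using (\ref{plan-supp}) together with the hypothesis (\ref{lem4-new.5.31-con3}),
\[
\pi=\angle(\tilde{p}\tilde{w}\tilde{x})+\angle(\tilde{p}\tilde{w}\tilde{y})<\angle(\tilde{p}\tilde{y}_{1}\tilde{x}_{1})+\angle(\tilde{p}\tilde{x}_{2}\tilde{y}_{2})\le\pi,
\]
a contradiction. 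Hence (\ref{plan-length}) holds, and applying hinge monotonicity now at the \emph{hinge} vertices $(\tilde{x},\tilde{x}_{1})$ and $(\tilde{y},\tilde{y}_{2})$ delivers (\ref{lem4-new.5.31-con5}). The main technical obstacle is the clean derivation of the strict hinge monotonicity inside $\wt{V}(\delta_{0})$: the no-cut-pair hypothesis is what precludes the cut locus of $\tilde{p}$ or conjugate points of the hinge vertex from spoiling the smooth, strictly monotone first-variation analysis of the third side and the base angle along the hinge family, in particular for the rotated placements of the two original triangles.
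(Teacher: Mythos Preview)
Your overall strategy---subdivide the opposite side of the comparison triangle at $\tilde{w}$ and compare the two sub-triangles with the two given triangles---is the natural analogue of the classical Alexandrov lemma in constant curvature, and it is a genuinely different route from the paper's proof, which instead places the two given triangles adjacently in $\wt{M}$, shows that the glued figure actually lies in $\theta^{-1}(0,\delta_0)$, and then locates a comparison point $\tilde{y}_3$ on a parallel arc via Lemma~\ref{lem3.5} and the intermediate value theorem.

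However, your argument has a real gap at the ``base-angle'' step of the contradiction. You claim that if $d(\tilde{p},\tilde{w})>d(\tilde{p},\tilde{y}_{1})$ then $\angle(\tilde{p}\tilde{w}\tilde{x})<\angle(\tilde{p}\tilde{y}_{1}\tilde{x}_{1})$; that is, for two triangles sharing the side-lengths $a=d(\tilde{p},\tilde{u})$ and $b=d(\tilde{u},\tilde{v})$, the base angle at $\tilde{v}$ is strictly decreasing in the third side $c=d(\tilde{p},\tilde{v})$. This is \emph{false already in the Euclidean plane} whenever $b>a$. The law of cosines gives $\cos\psi=(b^{2}+c^{2}-a^{2})/(2bc)$, and
\[
\frac{d}{dc}\cos\psi=\frac{c^{2}+a^{2}-b^{2}}{2bc^{2}},
\]
so $\psi$ is decreasing in $c$ only on the range where the angle at $\tilde{p}$ is acute. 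For a concrete failure take $a=1$, $b=3$, $c_{1}=2.5$, $c_{w}=3$: then $c_{w}>c_{1}$ but $\psi_{w}\approx 19.2^{\circ}>\psi_{1}\approx 18.2^{\circ}$. Nothing in the hypotheses of the lemma forces $b_{1}\le a$ or the angle at $\tilde{p}$ to be acute, so both displayed strict inequalities in your contradiction step can fail.

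On the model surface the situation is worse, not better. Your appeal to ``first variation together with the absence of conjugate points inside $\wt{V}(\delta_{0})$'' would at best justify the \emph{hinge-angle} monotonicity (third side increasing with the hinge angle) for triangles lying in the sector; it says nothing about the base angle, and it does not apply to the given triangles $\triangle(\tilde{p}\tilde{x}_{1}\tilde{y}_{1})$, $\triangle(\tilde{p}\tilde{x}_{2}\tilde{y}_{2})$, which are only assumed to lie in $\wt{M}$. Your claim that rotational symmetry lets you ``transport the original triangles'' into a no-cut-pair region is precisely the nontrivial content the paper proves separately (the inequality $\theta(\tilde{y}_{2})<\delta_{0}$, obtained from Lemma~\ref{lem3.5} and the triangle inequality); you cannot simply assume it. To salvage your route you would need a direct comparison of $d(\tilde{p},\tilde{w})$ with $d(\tilde{p},\tilde{y}_{1})$ that does not pass through base-angle monotonicity---which is essentially what the paper's construction of the auxiliary point $\tilde{y}_{3}$ on the parallel $t=d(\tilde{p},\tilde{y}_{2})$ accomplishes by different means.
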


\begin{proof}
From (\ref{lem4-new.5.31-con4}), 
we may draw $\triangle (\tilde{p}\tilde{x}_{2}\tilde{y}_{2})$ on $\wt{M}$, 
which is adjacent to $\triangle (\tilde{p}\tilde{x}_{1}\tilde{y}_{1})$, 
so as to have a common edge $\tilde{p} \tilde{y}_{1} = \tilde{p} \tilde{x}_{2}$, 
i.e., $\tilde{y}_{1} = \tilde{x}_{2}$. 
For the metric (\ref{polar}), we set 
\[
0 := \theta (\tilde{x}_{1}) < \theta (\tilde{y}_{1}) = \theta (\tilde{x}_{2}) <  \theta (\tilde{y}_{2}).
\]
Since the existence of $\triangle (\tilde{p}\tilde{x}\tilde{y}) \subset \wt{V}(\delta_{0})$ is 
unique up to an isometry fixing $\tilde{p}$, 
we may assume, by $d(\tilde{p}, \tilde{x}) = d(\tilde{p}, \tilde{x}_{1})$ of (\ref{lem4-new.5.31-con1}), that
\[
0 = \theta (\tilde{x}) < \theta (\tilde{y}), 
\]
i.e., $\tilde{x} = \tilde{x}_{1}$ and $\theta (\tilde{y}) > 0$. 
Since $\theta (\tilde{y}) < \delta_{0} < \pi$, it is clear from the same argument in the proof of Lemma 
\ref{lem4.5} that 
\begin{equation}\label{lem4-new.10.12-0}
\theta (\tilde{y}_{2}) \le \pi
\end{equation}
holds. By the triangle inequality and (\ref{lem4-new.5.31-con2}), 
\begin{equation}\label{lem4-new.5.31-0}
d(\tilde{x}_{1}, \tilde{y}_{2}) \le d(\tilde{x}_{1}, \tilde{y}_{1}) + d(\tilde{y}_{1}, \tilde{y}_{2}) 
= d(\tilde{x}, \tilde{y}) = d(\tilde{x}_{1}, \tilde{y}).
\end{equation}
Since $\triangle (\tilde{p}\tilde{x}\tilde{y}) \subset \wt{V}(\delta_{0})$, (\ref{lem4-new.10.12-0}), and 
(\ref{lem4-new.5.31-0}), 
it follows from Lemma \ref{lem3.5} that 
$\theta(\tilde{y}_{2}) \le \angle (\tilde{x}_{1}\tilde{p}\tilde{y}) < \delta_{0}$ holds, i.e., 
\begin{equation}\label{lem4-new.5.31-1}
\tilde{y}_{2} \in \theta^{-1} (0, \delta_{0}).
\end{equation}
Let $\wt{\cD}(\delta_{0})$ denote the domain defined by 
\[
\wt{\cD} (\delta_{0}) := \left\{ \tilde{q} \in  \theta^{-1} (0, \delta_{0}) \,|\, 
t(\tilde{q}) < t(\tilde{\sigma} (s))\,\text{\small{whenever}}\,\theta (\tilde{q}) = 
\theta (\tilde{\sigma} (s))\,\text{\small{for some}}\,s > 0 \right\}_.
\]
Here $\tilde{\sigma} : [0, \infty) \lra \wt{M}$ denotes the unit speed geodesic emanating from $\tilde{x}_{1}$ 
passing through $\tilde{y}_{1} = \tilde{\sigma} (d(\tilde{x}_{1}, \tilde{y}_{1}))$. 
Thus, the domain $\wt{\cD} (\delta_{0})$ is bounded by the three subarcs of 
the two meridians $\theta = 0$, $\theta = \delta_{0}$, and $\tilde{\sigma}$.\par
There is nothing to prove, 
if $\angle (\tilde{p}\tilde{y}_{1}\tilde{x}_{1}) + \angle (\tilde{p}\tilde{y}_{1}\tilde{y}_{2}) = \pi$ holds. 
Thus, from (\ref{lem4-new.5.31-con3}), we may assume that 
\begin{equation}\label{lem4-new.5.31-2}
\angle (\tilde{p}\tilde{y}_{1}\tilde{x}_{1}) + \angle (\tilde{p}\tilde{y}_{1}\tilde{y}_{2}) < \pi.
\end{equation}
Hence, by (\ref{lem4-new.5.31-1}) and (\ref{lem4-new.5.31-2}), 
it is clear that $\tilde{y}_{2} \in \wt{\cD} (\delta_{0})$. 
Let $\tilde{c} : [\theta (\tilde{y}_{2}), \delta_{0}] \lra \wt{M}$ denote the subarc of the parallel 
$t = d(\tilde{p}, \tilde{y}_{2})$ that is cut off by two meridians 
$\theta = \theta (\tilde{y}_{2})$ and $\theta = \delta_{0}$. 
Here $\tilde{c}$ is assumed to be parametrized by $\theta$, so that 
$\tilde{c}|_{[\theta (\tilde{y}_{2}), \, \delta_{0})} \subset \wt{V}(\delta_{0})$. 
From Lemma \ref{lem3.5}, $d(\tilde{p}, \tilde{y}) = d(\tilde{p}, \tilde{y}_{2})$ 
of (\ref{lem4-new.5.31-con1}), and (\ref{lem4-new.5.31-0}), it follows that 
\begin{equation}\label{lem4-new.5.31-3}
d(\tilde{x}_{1}, \tilde{c}(\theta (\tilde{y}_{2}))) = d(\tilde{x}_{1}, \tilde{y}_{2}) 
\le d(\tilde{x}_{1}, \tilde{c}(\theta)) \le d(\tilde{x}_{1}, \tilde{c}(\delta_{0}))
\end{equation}
and
\begin{equation}\label{lem4-new.6.20-new}
d(\tilde{x}_{1}, \tilde{y}_{2}) < d(\tilde{x}_{1}, \tilde{y}).
\end{equation}
Moreover, 
it follows from (\ref{lem4-new.5.31-con2}), (\ref{lem4-new.6.20-new}), 
and $\triangle (\tilde{p}\tilde{x}\tilde{y})\subset \wt{V}(\delta_{0})$ that 
\begin{equation}\label{lem4-new.5.31-4}
d(\tilde{x}_{1}, \tilde{c}(\delta_{0})) > d (\tilde{x}_{1}, \tilde{y}) 
= d(\tilde{x}_{1}, \tilde{y}_{1}) + d(\tilde{y}_{1}, \tilde{y}_{2}) > d(\tilde{x}_{1}, \tilde{y}_{2}).
\end{equation}
Therefore, by (\ref{lem4-new.5.31-3}), (\ref{lem4-new.5.31-4}), and the intermediate value theorem, 
there exists a point $\tilde{y}_{3} \in \theta^{-1}(\theta (\tilde{y}_{2}), \delta_{0})$ on 
$\tilde{c}|_{(\theta (\tilde{y}_{2}), \, \delta_{0})}$ such that 
\begin{equation}\label{lem4-new.5.31-5}
d(\tilde{x}_{1}, \tilde{y}_{3}) = d(\tilde{x}_{1}, \tilde{y}_{1}) + d(\tilde{y}_{1}, \tilde{y}_{2}).
\end{equation}
Supposing that $\tilde{y}_{3}$ is not a point in the closure of $\wt{\cD}(\delta_{0})$, 
we will get a contradiction. 
Since $\tilde{y}_{2} \in \wt{\cD}(\delta_{0})$ and $\tilde{y}_{3}$ is not in the closure of $\wt{\cD}(\delta_{0})$, $\tilde{\sigma}$ intersects $\tilde{c}$ 
at a point $\tilde{c}(\theta_{0}) = \tilde{\sigma}(s_{0})$. 
Without loss of generality, 
we may assume that $\tilde{c}|_{[\theta (\tilde{y}_{2}), \, \theta_{0})}$ lies in $\wt{\cD}(\delta_{0})$. 
From Lemma \ref{lem3.5}, it follows that 
\begin{equation}\label{lem4-new.5.31-6}
d(\tilde{y}_{1}, \tilde{y}_{2}) < d(\tilde{y}_{1}, \tilde{c}(\theta_{0})) = d(\tilde{y}_{1}, \tilde{\sigma}(s_{0})).
\end{equation}
Since $\tilde{\sigma}|_{[0, \, s_{0}]}$ is minimal, 
it follows from (\ref{lem4-new.5.31-6}) that 
\begin{equation}\label{lem4-new.5.31-7}
d(\tilde{x}_{1}, \tilde{c}(\theta_{0})) 
= d(\tilde{x}_{1}, \tilde{y}_{1}) + d(\tilde{y}_{1}, \tilde{c}(\theta_{0})) 
> d(\tilde{x}_{1}, \tilde{y}_{1}) + d(\tilde{y}_{1}, \tilde{y}_{2}).
\end{equation}
By using Lemma \ref{lem3.5} again, 
\begin{equation}\label{lem4-new.5.31-8}
d(\tilde{x}_{1}, \tilde{c}(\theta_{0})) < d(\tilde{x}_{1}, \tilde{y}_{3}).
\end{equation}
Combining (\ref{lem4-new.5.31-7}) and (\ref{lem4-new.5.31-8}), we get 
\[
d(\tilde{x}_{1}, \tilde{y}_{3}) > d(\tilde{x}_{1}, \tilde{y}_{1}) + d(\tilde{y}_{1}, \tilde{y}_{2}).
\]
This is a contradiction to (\ref{lem4-new.5.31-5}). 
Thus, $\tilde{y}_{3}$ lies in the closure of $\wt{\cD} (\delta_{0})$. 
Since any subarc of $\tilde{\sigma}$ is minimal when the subarc lies in $\theta^{-1}(0, \delta_{0})$, 
the minimal geodesic joining $\tilde{x}_{1}$ to $\tilde{y}_{3}$ lies 
in the closure of $\wt{\cD} (\delta_{0})$. 
Then, we get a geodesic triangle $\triangle (\tilde{p} \tilde{x}_{1}\tilde{y}_{3}) \subset \wt{V}(\delta_{0})$ 
satisfying (\ref{lem4-new.5.31-con1}) and (\ref{lem4-new.5.31-con2}) for $\tilde{x} = \tilde{x}_{1}$ and 
$\tilde{y} = \tilde{y}_{3}$. 
Since $\wt{V}(\delta_{0})$ has no pair of cut points, 
$\triangle (\tilde{p} \tilde{x}\tilde{y})$ and $\triangle (\tilde{p} \tilde{x}_{1}\tilde{y}_{3})$
are isometric, i.e., $\triangle (\tilde{p} \tilde{x}\tilde{y}) = \triangle (\tilde{p} \tilde{x}_{1}\tilde{y}_{3})$. 
Therefore, $\triangle (\tilde{p} \tilde{x}\tilde{y})$ satisfies 
\[
\angle (\tilde{p}\tilde{x}_{1}\tilde{y}_{1}) \ge \angle (\tilde{p}\tilde{x}\tilde{y})
\] 
of (\ref{lem4-new.5.31-con5}), because the minimal geodesic joining 
$\tilde{x} = \tilde{x}_{1}$ to $\tilde{y} = \tilde{y}_{3}$ lies in the closure of $\wt{\cD} (\delta_{0})$. 
By exchanging $\tilde{x}$ and $\tilde{y}$, and doing $\tilde{x}_{1}$ and $\tilde{y}_{2}$, respectively, 
in the argument above, we may get a geodesic triangle 
$\triangle (\tilde{p} \tilde{x}_{3}\tilde{y}_{2}) \subset \wt{V}(\delta_{0})$ 
satisfying (\ref{lem4-new.5.31-con1}) and (\ref{lem4-new.5.31-con2}) for $\tilde{x} = \tilde{x}_{3}$ and 
$\tilde{y} = \tilde{y}_{2}$, 
and $\angle (\tilde{p}\tilde{y}_{2}\tilde{x}_{2}) \ge \angle (\tilde{p}\tilde{y}_{2}\tilde{x}_{3})$. 
Since both triangles 
$\triangle (\tilde{p} \tilde{x}\tilde{y}), \triangle (\tilde{p} \tilde{x}_{3}\tilde{y}_{2}) \subset \wt{V}(\delta_{0})$ 
are isometric, $\angle (\tilde{p}\tilde{y}\tilde{x}) = \angle (\tilde{p}\tilde{y}_{2}\tilde{x}_{3})$ holds, i.e., 
we get 
\[
\angle (\tilde{p}\tilde{y}_{2}\tilde{x}_{2}) \ge \angle (\tilde{p}\tilde{y}\tilde{x})
\] 
of (\ref{lem4-new.5.31-con5}). 
Therefore, we have proved our lemma. 
$\qedd$
\end{proof}

By Lemmas \ref{lem4.2}, \ref{lem4.5}, and \ref{lem4-new.5.31}, 
we have the next lemma. 

\begin{lemma}\label{lem4.6}{\bf (Essential Lemma for Toponogov Comparison Theorem)}\ \par
Let $(M,p)$ be a complete open Riemannian $n$-manifold $M$ 
whose radial curvature at the base point $p$ is bounded from below by
that of a non-compact model surface of revolution $(\wt{M}, \tilde{p})$. 
Assume that $\wt{M}$ admits a sector $\wt{V}(\delta_{0})$ 
for some $\delta_{0} \in (0, \pi)$ which has no pair of cut points. 
If a geodesic triangle $\triangle(pxy)$ in $M$ admits a geodesic triangle 
$\wt{\triangle} (pxy) := \triangle (\tilde{p}\tilde{x}\tilde{y})$ in $\wt{V}(\delta_{0})$ satisfying 
\begin{equation}\label{lem4.6-length}
d(\tilde{p},\tilde{x})=d(p,x), \quad d(\tilde{p},\tilde{y})=d(p,y), \quad d(\tilde{x},\tilde{y})=d(x,y),  
\end{equation}
then 
\begin{equation}\label{lem4.6-angle}
\angle (pxy) \ge \angle (\tilde{p}\tilde{x}\tilde{y}), \quad
\angle (pyx) \ge \angle (\tilde{p}\tilde{y}\tilde{x}).
\end{equation}
\end{lemma}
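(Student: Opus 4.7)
The plan is to discretize the edge $xy$ into finitely many \emph{narrow} mini-triangles, invoke the narrow-triangle comparison (Lemma \ref{lem4.2}) for each, and propagate the angle inequality at $\tilde{x}$ inductively by gluing these mini-comparisons with the partial comparison triangles in $\wt{V}(\delta_{0})$ supplied by Lemma \ref{lem4.5}, the gluing step being governed by Lemma \ref{lem4-new.5.31}.

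More precisely, I would let $\sigma : [0, \ell] \lra M$ be the minimal geodesic segment from $x = \sigma(0)$ to $y = \sigma(\ell)$ and choose a partition $0 = s_0 < s_1 < \cdots < s_k = \ell$ fine enough that each mini-triangle $\triangle(p x_i x_{i+1})$, where $x_i := \sigma(s_i)$, is narrow in the sense of Definition \ref{def4.1} (this is possible since $\sigma$ stays bounded away from $p$). Lemma \ref{lem4.2} then furnishes, for each $i$, a comparison triangle $\wt{\triangle}_i := \triangle(\tilde{p} \tilde{x}_i \tilde{x}_{i+1}) \subset \wt{M}$ satisfying (\ref{lem4.2-length}) and (\ref{lem4.2-angle}), while Lemma \ref{lem4.5} provides, for each $i \geq 1$, a ``partial'' comparison triangle $\wt{\triangle}(p x_0 x_i) \subset \wt{V}(\delta_{0})$ satisfying (\ref{lem4.5-length}); the additive identity $d(\tilde{x}_0, \tilde{x}_i) = s_i = \sum_{j<i} d(\tilde{x}_j, \tilde{x}_{j+1})$ ensures that successive gluings have the correct total side length.

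The core step is to prove, by induction on $i \in \{1, \dots, k\}$, the two inequalities
\[
\angle(\tilde{p} \tilde{x}_0 \tilde{x}_i) \le \angle(pxy), \qquad \angle(\tilde{p} \tilde{x}_i \tilde{x}_0) \le \angle(p x_i x_0),
\]
both measured in $\wt{\triangle}(p x_0 x_i) \subset \wt{V}(\delta_{0})$. For the base case $i = 1$, the triangles $\wt{\triangle}_0$ and $\wt{\triangle}(p x_0 x_1)$ share the three side lengths of $\triangle(p x_0 x_1)$; since in a model surface of revolution a triangle with given side lengths is unique up to isometries fixing $\tilde{p}$, they are isometric and the angle inequalities of Lemma \ref{lem4.2} transfer. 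For the inductive step, I would apply Lemma \ref{lem4-new.5.31} to the pair $\wt{\triangle}(p x_0 x_i)$ (left) and $\wt{\triangle}_i$ (right), sharing the vertex $\tilde{x}_i$; the shared-side condition (\ref{lem4-new.5.31-con4}) is trivial, while the angle-sum condition (\ref{lem4-new.5.31-con3}) reads
\[
\angle(\tilde{p} \tilde{x}_i \tilde{x}_0) + \angle(\tilde{p} \tilde{x}_i \tilde{x}_{i+1}) \le \angle(p x_i x_0) + \angle(p x_i x_{i+1}) = \pi
\]
by the inductive hypothesis, Lemma \ref{lem4.2}, and the fact that $x_i$ is an interior point of the minimal geodesic $\sigma$. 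The required long triangle $\wt{\triangle}(p x_0 x_{i+1}) \subset \wt{V}(\delta_{0})$ needed by Lemma \ref{lem4-new.5.31} is supplied by Lemma \ref{lem4.5} with the correct glued side length. Its conclusion, combined with Lemma \ref{lem4.2} applied at $\tilde{x}_{i+1}$ and with the identity $\angle(p x_{i+1} x_i) = \angle(p x_{i+1} x_0)$ (both being the angle along $\sigma$ from $x_{i+1}$ toward $x_0$), yields both inductive inequalities at stage $i+1$.

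Taking $i = k$ gives $\angle(\tilde{p} \tilde{x} \tilde{y}) \le \angle(pxy)$, and the symmetric argument subdividing from $y$ instead yields $\angle(\tilde{p} \tilde{y} \tilde{x}) \le \angle(pyx)$. The principal technical obstacle is the verification of the angle-sum condition (\ref{lem4-new.5.31-con3}) at each inductive step: it requires carrying along the angle inequality at the \emph{pivot} vertex $\tilde{x}_i$, not merely at the far vertex $\tilde{x}_0$, which is why the induction must propagate both inequalities simultaneously and why Lemma \ref{lem4.5} is indispensable in ensuring that every intermediate glued triangle already lives in $\wt{V}(\delta_{0})$.
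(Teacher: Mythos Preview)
Your proof is correct and follows essentially the same strategy as the paper's: subdivide the edge $\sigma$, use Lemma \ref{lem4.2} on the narrow pieces, use Lemma \ref{lem4.5} to place the partial comparison triangles inside $\wt{V}(\delta_{0})$, and use Lemma \ref{lem4-new.5.31} to glue while propagating both angle inequalities (at the fixed vertex $\tilde{x}_{0}$ and at the moving pivot $\tilde{x}_{i}$). The only difference is presentational: the paper phrases the argument as a supremum-by-contradiction (define $S\subset(0,\ell]$ by both length and angle conditions, suppose $s_{1}:=\sup S<\ell$, and extend by one narrow step to obtain $s_{1}+\ve\in S$), whereas you fix a finite partition and induct discretely; the inductive step in your version is exactly the ``extend by $\ve$'' step in theirs.
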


\begin{proof}
Let $\triangle (pxy)$ denote any geodesic triangle in $M$ which admits a geodesic triangle 
$\wt{\triangle} (pxy)$ in $\wt{V}(\delta_{0})$ satisfying (\ref{lem4.6-length}). 
We denote by $\sigma : [0, \ell] \lra M$ the edge of $\triangle (pxy)$ opposite to $p$. 
Let $S$ be the set of all $r \in (0, \ell]$ such that 
there exists a geodesic triangle 
$\wt{\triangle} (px\sigma(r)) := \triangle (\tilde{p}\tilde{x}\tilde{\sigma}(r)) \subset \wt{V}(\delta_{0})$ corresponding to the triangle $\triangle(px\sigma(r)) \subset M$ satisfying 
(\ref{lem4.6-length}) and (\ref{lem4.6-angle}) for $y = \sigma(r)$. 
It is clear from Lemma \ref{lem4.2} that $S$ is non-empty. 
Since there is nothing to prove in the case where $\sup S = \ell$, 
we suppose that 
\[
s_{1} := \sup S < \ell.
\]
By Lemma \ref{lem4.5}, 
there exists a geodesic triangle $\wt{\triangle} (px\sigma(s_{1})) \subset \wt{V}(\delta_{0})$ corresponding to the triangle $\triangle(px\sigma(s_{1})) \subset M$ satisfying 
(\ref{lem4.6-length}) and (\ref{lem4.6-angle}) for $y = \sigma(s_{1})$. 
For a sufficiently small $\ell - s_{1} > \ve > 0$, it follows from Lemma \ref{lem4.2} that 
there exists a geodesic triangle $\wt{\triangle} (p \sigma(s_{1}) \sigma(s_{1} + \ve))$ 
corresponding to the triangle $\triangle(p \sigma(s_{1}) \sigma(s_{1} + \ve)) \subset M$ 
satisfying (\ref{lem4.6-length}) and (\ref{lem4.6-angle}) 
for $x = \sigma(s_{1})$ and $y = \sigma(s_{1} + \ve)$. 
Thus, two geodesic triangles $\wt{\triangle} (px\sigma(s_{1})) $ and 
$\wt{\triangle} (p \sigma(s_{1}) \sigma(s_{1} + \ve))$ satisfy 
(\ref{lem4-new.5.31-con4}) and (\ref{lem4-new.5.31-con3}) (in Lemma \ref{lem4-new.5.31}) 
for $\tilde{x}_{1} = \tilde{x}$, $\tilde{y}_{1} = \tilde{\sigma}(s_{1})$, 
$\tilde{x}_{2} = \tilde{\sigma}(s_{1})$, and $\tilde{y}_{2} = \tilde{\sigma}(s_{1} + \ve)$. 
On the other hand, by Lemma \ref{lem4.5} again, 
we get a geodesic triangle $\triangle (\tilde{x} \tilde{p} \hat{y}) \subset \wt{V}(\delta_{0})$ 
satisfying (\ref{lem4-new.5.31-con1}) and (\ref{lem4-new.5.31-con2}) (in Lemma \ref{lem4-new.5.31}) 
for $\tilde{y} = \hat{y}$, $\tilde{x}_{1} = \tilde{x}$, $\tilde{y}_{1} = \tilde{\sigma}(s_{1})$, 
$\tilde{x}_{2} = \tilde{\sigma}(s_{1})$, and $\tilde{y}_{2} = \tilde{\sigma}(s_{1} + \ve)$. 
Thus, it follows from Lemma \ref{lem4-new.5.31} that 
$s_{1} + \ve \in S$. 
This contradicts the fact that $s_{1}$ is the supremum of $S$.
$\qedd$
\end{proof}

Then, by Lemmas \ref{lem4.3} and \ref{lem4.6}, 
we have a new type of Toponogov comparison theorem, 
which is the main theorem in this section.

\medskip

\begin{theorem}\label{thm4.8}
{\bf (A New Type of Toponogov  Comparison Theorem)}\ \par
Let $(M,p)$ be a complete open Riemannian $n$-manifold $M$ 
whose radial curvature at the base point $p$ is bounded from below by
that of a non-compact model surface of revolution $(\wt{M}, \tilde{p})$. 
Assume that $\wt{M}$ admits a sector $\wt{V}(\delta_{0})$ 
for some $\delta_{0} \in (0, \pi)$ which has no pair of cut points. 
Then, for every geodesic triangle $\triangle(pxy)$ in $M$ with $\angle (xpy) < \delta_{0}$,
there exists a geodesic triangle 
$\wt{\triangle} (pxy) :=\triangle(\tilde{p}\tilde{x}\tilde{y})$ in $\wt{V}(\delta_{0})$ such that
\begin{equation}\label{thm4.8-length}
d(\tilde{p},\tilde{x})=d(p,x), \quad d(\tilde{p},\tilde{y})=d(p,y), \quad d(\tilde{x},\tilde{y})=d(x,y) 
\end{equation}
and that
\begin{equation}\label{thm4.8-angle}
\angle (xpy) \ge \angle (\tilde{x}\tilde{p}\tilde{y}), \quad  
\angle (pxy) \ge \angle (\tilde{p}\tilde{x}\tilde{y}), \quad
\angle (pyx) \ge \angle (\tilde{p}\tilde{y}\tilde{x}). 
\end{equation}
\end{theorem}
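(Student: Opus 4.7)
The plan is to establish existence of a comparison triangle $\wt{\triangle}(pxy)\subset\wt{V}(\delta_{0})$ by a continuation argument along sub-triangles, and then to read off the two angle inequalities at $\tilde{x}$ and $\tilde{y}$ from the Essential Lemma (Lemma \ref{lem4.6}) and the angle inequality at $\tilde{p}$ from Alexandrov Convexity (Lemma \ref{lem4.3}). Parametrize the two edges of $\triangle(pxy)$ emanating from $p$ as $x, y : [0,1] \lra M$ proportionally to arc length, and whenever $\triangle(px(s)y(s))$ has a comparison triangle $\wt{\triangle}(px(s)y(s))\subset\wt{M}$ satisfying the matching side-length conditions, write $\theta(s):=\angle(\tilde{x}(s)\tilde{p}\tilde{y}(s))$. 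Set
\[
S := \left\{\, s \in (0,1] \, : \, \triangle(px(s)y(s)) \text{ admits a comparison triangle contained in } \wt{V}(\delta_{0}) \,\right\}.
\]
The target is to show $S=(0,1]$; the case $s=1$ then supplies the required $\wt{\triangle}(pxy)$.

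For the base case, for every sufficiently small $s>0$ the triangle $\triangle(px(s)y(s))$ is narrow in the sense of Definition \ref{def4.1}, so Lemma \ref{lem4.2} provides a comparison triangle $\wt{\triangle}(px(s)y(s))$ in $\wt{M}$ with matching side lengths and with the angle inequalities at $\tilde{x}(s)$ and $\tilde{y}(s)$. A standard first-variation expansion of $d(x(s),y(s))$ together with an expansion of the corresponding model distance gives $\lim_{s\downarrow 0}\theta(s)=\angle(xpy)$; since $\angle(xpy)<\delta_{0}$ by hypothesis, $\theta(s)<\delta_{0}$ for all sufficiently small $s$, so $S\neq\emptyset$.

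Set $s_{1}:=\sup S$. I want to show $s_{1}=1$ and $1\in S$. Pick $s\in S$ close to $s_{1}$. By Lemma \ref{lem4.5} every sub-triangle $\triangle(px(t)y(t))$ with $0<t\le s$ also admits a comparison triangle in $\wt{V}(\delta_{0})$ satisfying the matching side lengths; Lemma \ref{lem4.6} applied to each then yields the angle inequalities at $\tilde{x}(t)$ and $\tilde{y}(t)$. The hypotheses of Lemma \ref{lem4.3} are therefore satisfied on $(0,s]$, so $\theta$ is locally Lipschitz and non-increasing there. Combined with $\theta(0^{+})=\angle(xpy)$, this gives $\theta(t)\le\angle(xpy)<\delta_{0}$ for all $t\in(0,s]$. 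The local Lipschitz dependence of the comparison $\tilde{p}$-angle on the three side lengths (the auxiliary lemma preceding Lemma \ref{lem4.3}) then extends $\theta$ continuously across $s_{1}$, still strictly below $\delta_{0}$, so the comparison triangles stay inside $\wt{V}(\delta_{0})$ on a neighborhood of $s_{1}$. This simultaneously shows $s_{1}\in S$ and rules out $s_{1}<1$, forcing $S=(0,1]$.

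Applied at $s=1$, this gives the triangle $\wt{\triangle}(pxy)\subset\wt{V}(\delta_{0})$ with matching side lengths. The inequalities $\angle(pxy)\ge\angle(\tilde{p}\tilde{x}\tilde{y})$ and $\angle(pyx)\ge\angle(\tilde{p}\tilde{y}\tilde{x})$ are immediate from Lemma \ref{lem4.6}, and the third inequality $\angle(xpy)\ge\angle(\tilde{x}\tilde{p}\tilde{y})=\theta(1)$ is the monotonicity output of Lemma \ref{lem4.3} combined with the limit $\theta(0^{+})=\angle(xpy)$. The main obstacle is the apparent circularity between existence and Alexandrov Convexity: to know that comparison triangles remain in $\wt{V}(\delta_{0})$ one needs the monotonicity of $\theta$, but Lemma \ref{lem4.3} can only be invoked once comparison triangles in $\wt{V}(\delta_{0})$ are in hand. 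The resolution is to bootstrap: Lemma \ref{lem4.5} propagates existence downward along sub-triangles, Lemma \ref{lem4.6} upgrades this to angle inequalities at $\tilde{x}(t)$ and $\tilde{y}(t)$, and Lemma \ref{lem4.3} then feeds the strict bound $\theta<\delta_{0}$ back into the next extension step.
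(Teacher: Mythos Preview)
Your overall strategy---a continuation argument along the family $\triangle(px(t)y(t))$, feeding Lemma~\ref{lem4.6} (angle inequalities at $\tilde{x}(t),\tilde{y}(t)$) into Lemma~\ref{lem4.3} (monotonicity of $\theta$)---is exactly the paper's approach. But two of your steps are not justified as written.

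First, the base case. For small $s$ the triangle $\triangle(px(s)y(s))$ is \emph{not} narrow in the sense of Definition~\ref{def4.1}: all three side lengths are of order $s$ (indeed $d(x(s),y(s))/s$ tends to a positive constant depending on $\angle(xpy)$), so $d(x(s),y(s))$ is comparable to, not much smaller than, $\min\{d(p,x(s)),d(p,y(s))\}$. Lemma~\ref{lem4.2} therefore does not apply. The paper instead invokes the Rauch comparison theorem directly for small $t$ to produce a comparison triangle $\wt{\triangle}(px(t)y(t))\subset\wt{V}(\delta_{0})$ satisfying $\theta(t)\le\angle(xpy)<\delta_{0}$.

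Second, and more seriously, your appeal to Lemma~\ref{lem4.5} is a misreading. That lemma concerns sub-triangles $\triangle(px\sigma(r))$ obtained by moving one vertex along the edge $xy$ \emph{opposite} $p$; it says nothing about the family $\triangle(px(t)y(t))$ obtained by shrinking along the two edges \emph{from} $p$. So it does not give you $(0,s]\subset S$ from $s\in S$. The paper sidesteps this issue: the set $W$ of parameters admitting a comparison triangle in $\wt{V}(\delta_{0})$ is open, so one takes the connected component $(0,t_{0})$ of $W$ containing the base interval, applies Lemmas~\ref{lem4.6} and~\ref{lem4.3} on that interval to get $\theta$ non-increasing (hence $\theta<\delta_{0}$ throughout), and concludes that the limiting comparison triangle at $t_{0}$ still lies in $\wt{V}(\delta_{0})$. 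This forces $t_{0}\in W$ if $t_{0}<1$, contradicting maximality of the component; hence $W=(0,1)$ and the theorem follows.
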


\medskip

\begin{proof}
Let $W$ be the set of all $t \in (0, 1)$ such that 
there exists a geodesic triangle 
$\wt{\triangle} (px(t)y(t)) := \triangle (\tilde{p}\tilde{x}(t)\tilde{y}(t)) \subset \wt{V}(\delta_{0})$ corresponding to the triangle $\triangle(px(t)y(t)) \subset M$ satisfying 
(\ref{thm4.8-length}) for $x = x(t)$ and $y = y(t)$. 
Here $x(t)$ and $y(t)$ denote the minimal geodesic segments introduced in Lemma \ref{lem4.3}, 
respectively. 
It is clear that $W$ is open. 
From the Rauch comparison theorem, 
there exists a constant $\ve_{0} > 0$ such that, 
for each $t \in (0, \ve_{0})$, 
there exists a geodesic triangle 
$\wt{\triangle}(px(t)y(t)) \subset \wt{V}(\delta_{0})$ 
corresponding to $\triangle(px(t)y(t)) \subset M$ satisfying 
(\ref{thm4.8-length}) for $x = x(t)$ and $y = y(t)$ and 
\[
\theta (t) := \angle (\tilde{x}(t) \tilde{p} \tilde{y}(t)) \le \angle (x(t) p y(t)) = \angle (x p y) < \delta_{0}.
\]
Hence we get 
\[
(0, \ve_{0}) \subset W.
\]
Let $(0, t_{0}) \subset W$ denote the connected component of $W$ containing $(0, \ve_{0})$. 
From Lemmas \ref{lem4.3} and \ref{lem4.6}, 
it follows that $\theta (t)$ is non-increasing on $(0, t_{0})$. 
Thus, the geodesic triangle $\triangle(px(t_{0})y(t_{0}))$ has a corresponding triangle 
$\wt{\triangle} (px(t_{0})y(t_{0})) \subset \wt{V}(\delta_{0})$ satisfying (\ref{thm4.8-length}) 
for $x = x(t_{0})$ and $y = y(t_{0})$. This implies that $t_{0} \in W$ if $t_{0} < 1$. 
Therefore, $W = (0, 1)$ and the proof is complete. 
$\qedd$
\end{proof}

\begin{remark}
We refer to \cite{O3} for a generalization of the Toponogov comparison theorems 
to the Finsler geometry, and its applications.
\end{remark}

\section{Application of Main Theorem}\label{sec:MOC}

In this section, we first give the proof of Model Lemma (Lemma \ref{lem5.1}). 
After recalling some differential inequality (Lemma \ref{DIL}), 
we give the proof of our answer (Theorem \ref{thm5.3}) to Milnor's open conjecture. 
Finally, we prove a corollary (Corollary \ref{cor5.4}) to Main Theorem.

\subsection{Proof of Model Lemma}\label{PML}

Let $M$ be an arbitrary complete open Riemannian $n$-manifold. 
Fix {\bf any} point $p \in M$. Then, we have 

\begin{lemma}\label{lem5.1}
There exists a locally Lipschitz function $G(t)$ on $[0, \infty)$ 
such that the radial curvature of $M$ at $p$ is bounded from below by 
that of the non-compact model surface of revolution with radial curvature function $G(t)$. 
\end{lemma}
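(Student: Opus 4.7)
The plan is to construct $G$ as a piecewise-linear lower envelope built from infima of sectional curvatures over compact balls centered at $p$, and then to use the existence of a ray in $M$ together with a Rauch-type comparison to verify that the warping function obtained from $G$ is positive on $(0, \infty)$, so that it genuinely defines a non-compact model surface of revolution.

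First, for each $k \in \N$ I would set
\[
b_{k} := \inf \left\{ K_{M}(\sigma) \, \big| \, q \in \ol{B_{k}(p)},\ \sigma \in \mathrm{Gr}_{2}(T_{q}M) \right\},
\]
which is finite because $\ol{B_{k}(p)}$ is compact by the Hopf--Rinow theorem and the sectional curvature is continuous on the Grassmannian bundle; the sequence $\{ b_{k} \}_{k \in \N}$ is non-increasing. Then I would define $G : [0, \infty) \lra \R$ by $G(n) := b_{n+1}$ for each integer $n \ge 0$ and extend linearly across each interval $[n, n+1]$. By construction $G$ is continuous and piecewise linear, hence locally Lipschitz; and for $t \in [n, n+1]$ the value $G(t)$ lies in $[b_{n+2}, b_{n+1}]$, so $G(t) \le b_{n+1}$. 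To verify the radial bound, take any unit-speed minimal geodesic $\gamma : [0, a) \lra M$ from $p$ and any $2$-plane $\sigma_{t}$ at $\gamma(t)$ containing $\gamma'(t)$; for $t \in [n, n+1]$ one has $\gamma(t) \in \ol{B_{n+1}(p)}$, so $K_{M}(\sigma_{t}) \ge b_{n+1} \ge G(t)$, as required.

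It remains to realize $G$ as the radial curvature function of an honest non-compact model surface of revolution. Let $f \in C^{2}([0, \infty))$ be the unique solution of $f''(t) + G(t) f(t) = 0$ with $f(0) = 0$ and $f'(0) = 1$; existence and uniqueness follow from the continuity of $G$. If $f > 0$ on $(0, \infty)$, then $(\wt{M}, \tilde{p}) := ((0, \infty) \times \Sph^{1},\ dt^{2} + f(t)^{2} d\theta^{2})$ has Gaussian curvature $-f''/f \equiv G$ along each meridian and is the required model. Suppose, toward a contradiction, that $T := \inf \{ t > 0 : f(t) = 0 \}$ is finite. Since $M$ is complete and non-compact, a ray $\gamma : [0, \infty) \lra M$ emanating from $p$ exists; let $J$ be the Jacobi field along $\gamma$ with $J(0) = 0$, $J'(0) \perp \gamma'(0)$, and $|J'(0)| = 1$. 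On its positive locus $|J|$ satisfies $|J|'' + K_{M}(\gamma'(t), J(t)) |J| \ge 0$, and combined with the radial bound $K_{M} \ge G$ from the previous paragraph, a Sturm/Rauch comparison against $f'' + Gf = 0$ on $[0, T)$ yields $|J(t)| \le f(t)$ there. Letting $t \uparrow T$ forces $J(T) = 0$, so $\gamma(T)$ is conjugate to $p$ along the ray $\gamma$---a contradiction.

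The construction of $G$ and the radial comparison are routine compactness arguments; the real obstacle is the last step, namely ruling out early zeros of $f$ in order to write down the model metric, and this is precisely where the completeness of $M$ (existence of a ray from $p$) enters via Rauch comparison. One technical remark is that since $G$ is only locally Lipschitz, the warping $f$ is only $C^{2}$ and the resulting model metric need not be smooth at the pole; this matches the relaxed reading of ``non-compact model surface of revolution'' employed in the sequel (cf.\,the metric (\ref{1.1}) and the comment following it preceding Theorem \ref{thm5.3}).
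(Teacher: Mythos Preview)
Your argument is correct, but it proceeds quite differently from the paper's own proof. The paper constructs a sharper $G$: for each unit vector $v\in\Sph^{n-1}_p$ it parallel-translates an orthonormal frame along the minimal geodesic $\gamma_v$, forms the symmetric matrix $R(t;v)=\bigl(\langle R(\gamma_v',E_i)\gamma_v',E_j\rangle\bigr)$, takes its least eigenvalue $F_0(t,v)$, extends past the cut radius by freezing at $\rho(v)$ via $F(t,v):=F_0(\min\{\rho(v),t\},v)$, and finally sets $G(t):=\min_{v}F(t,v)$. Local Lipschitz continuity is read off from smoothness of $R_{ij}(t,v)$ and the elementary Lipschitz estimate for $\min\{\rho(v),t\}$. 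In contrast, you take the coarse route of minimizing \emph{all} sectional curvatures over closed balls and interpolating linearly, which is shorter and avoids any frame bookkeeping or cut-locus truncation, but produces a function that can be far below the paper's $G$. That difference is harmless for Lemma~\ref{lem5.1} as stated, yet it matters downstream: in Theorem~\ref{thm5.3} and Corollary~\ref{cor5.4} one needs $\int_0^\infty(-t\,G^*(t))\,dt<\infty$, and the paper's sharper $G$ gives a strictly weaker hypothesis there than your construction would.

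On the other hand, you supply a step the paper omits: you verify that the Jacobi solution $f$ of $f''+Gf=0$, $f(0)=0$, $f'(0)=1$ stays positive on $(0,\infty)$, so that the model surface actually exists. Your Rauch/Sturm argument for this is fine (the Wronskian $u'f-uf'$ is nonincreasing since $u''+Ku\ge 0$ and $K\ge G$, forcing $u=|J|\le f$ before the first zero of $f$; a zero of $f$ would then force a conjugate point along the ray). The paper tacitly leaves this point aside, presumably as a standard consequence of Rauch together with the existence of a ray, so your write-up is in that respect more complete.
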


\begin{proof}
We set 
\[
\Sph^{n - 1}_{p} := \{ v \in T_{p} M \ | \ \| v \| = 1 \}.
\] 
Let $\gamma_{v} : [0, \rho(v)] \lra M$ be a minimal geodesic emanating from 
$p = \gamma_{v}(0)$ such that $v = \gamma'_{v}(0) \in \Sph^{n - 1}_{p}$, 
where we set 
\[
\rho(v) := \sup \{ t > 0 \ | \ d(p, \gamma_{v}(t)) = t \}.
\] 
Then, take an orthonormal basis 
\[
\{e_{1}, e_{2}, \cdots, e_{n - 1} \} := \{e_{1}(v), e_{2}(v), \cdots, e_{n - 1}(v) \}
\]
of the hyperplane in $T_{p}M$ orthogonal to $e_{n} := v$. 
Then, we denote by $E_{i}(t\,; v)$, $i = 1, 2, \cdots, n$, parallel vector fields along $\gamma_{v}$ 
such that $E_{i}(0\,; v) = e_{i}$. 
Moreover, let $\sigma_{t}$ be a $2$-dimensional linear subspace of $T_{\gamma_{v}(t)}M$ 
spanned by $\gamma'_{v}(t)$ and a tangent vector $w_{t}$ to $M$ at $\gamma_{v}(t)$ 
defined by
\[
w_{t} := \sum_{i = 1}^{n - 1}a_{i}E_{i}(t\,; v)
\]
with 
\[
\sum_{i = 1}^{n - 1}a_{i}^{2} = 1
\]
for $a_{1}, a_{2}, \cdots, a_{n -1} \in \R$. 
Thus, the radial sectional curvature $K_M(\sigma_{t})$ at $p$ of $M$ is given by 
\begin{equation}\label{5.1-1}
{K_M(\sigma_{t}) 
= \left\langle R(E_{n}(t\,; v), w_{t})E_{n}(t\,; v), w_{t} \right\rangle}_,
\end{equation}
where $R$ denotes the Riemannian curvature tensor of $M$, which is a multi-linear map, defined by 
\[
R(X, Y)Z := \nabla_{Y}\nabla_{X}Z -  \nabla_{X}\nabla_{Y}Z + \nabla_{[X, Y]}Z
\]
for smooth vector fields $X, Y, Z$ over $M$. 
Now, we set 
\[
\mbox{\boldmath $a$} 
:=
\left(
\begin{array}{c}
a_{1}\\
a_{2}\\
\vdots\\
a_{n -1}   
\end{array}
\right)
\in \Sph^{n - 2}(1),
\]
where $\Sph^{n - 2}(1) \subset \R^{n - 1}$ is a standard unit sphere, 
and set 
\[
{R_{ij}(t, v) := \left\langle R(E_{n}(t\,; v), E_{i}(t\,; v))E_{n}(t\,; v), E_{i}(t\,; v) \right\rangle}_. 
\]
Remark that $R_{ij}(t, v)$ is a $C^{\infty}$-function.
Then, by (\ref{5.1-1}), we see that 
\begin{equation}\label{5.1-2}
K_M(\sigma_{t})
= \sum_{i,\,j = 1}^{n - 1}a_{i}a_{j}R_{ij}(t, v) 
= {}^{t}\!\mbox{\boldmath $a$} R(t\,; v) \mbox{\boldmath $a$}_,
\end{equation}
where $R(t\,; v) := \left( R_{ij}(t, v) \right)$ 
is a symmetric $(n -1)\!\times\!(n -1)$-matrix. 
Furthermore, we set 
\[
F_{0}(t, v) := 
\min \left\{
{}^{t}\!\mbox{\boldmath $a$} R(t\,; v) \mbox{\boldmath $a$} \ | \ 
\mbox{\boldmath $a$} \in \Sph^{n - 2}(1) 
\right\}
\]
for all $(t, v) \in [0, \infty) \times \Sph^{n - 1}_{p}$ with $t \le \rho(v)$. 
Thus, by (\ref{5.1-2}), we get 
\begin{equation}\label{5.1-3}
K_M(\sigma_{t}) \ge F_{0}(t, v)
\end{equation}
for all $(t, v) \in [0, \infty) \times \Sph^{n - 1}_{p}$ with $t \le \rho(v)$. 
Remark that $F_{0}(t, v)$ is locally Lipschitz on 
$[0, \infty) \times \Sph^{n - 1}_{p}$ with $t \le \rho(v)$, 
since $R_{ij}(t, v)$ is a $C^{\infty}$-function.\par
We define 
\[
F(t, v) := F_{0}(\rho_{t}(v), v)
\]
on $[0, \infty) \times \Sph^{n - 1}_{p}$, where we set $\rho_{t}(v) := \min \{\rho (v), t\}$. 
Since $F(t, v)$ is locally Lipschitz on $[0, \infty) \times \Sph^{n - 1}_{p}$ with $t \le \rho(v)$, 
for any $b > 0$, 
there exits a constant $C_{0}$
such that
\begin{equation}\label{5.1-3.1}
|F(t_{1}, v) - F(t_{2}, v)| \le C_{0} |\rho_{t_{1}}(v) - \rho_{t_{2}}(v)|
\end{equation}
for all $t_{1}, t_{2} \in [0, b]$ and all $v \in \Sph^{n - 1}_{p}$. 
Since it is clear that 
\[
|\rho_{t_{1}}(v) - \rho_{t_{2}}(v)| \le |t_{1} - t_{2}|
\]
for all $t_{1}, t_{2} \in [0, b]$ and all $v \in \Sph^{n - 1}_{p}$, 
it follows from (\ref{5.1-3.1}) that 
\begin{equation}\label{5.1-3.1.1}
|F(t_{1}, v) - F(t_{2}, v)| \le C_{0} |t_{1} - t_{2}|
\end{equation}
holds for all $t_{1}, t_{2} \in [0, b]$ and all $v \in \Sph^{n - 1}_{p}$. 
Then, we get a locally Lipschitz function 
\begin{equation}\label{5.1-4}
G(t) := \min \left\{ F(t, v) \ | \ v \in \Sph^{n - 1}_{p} \right\}
\end{equation}
on $[0, \infty)$. 
Indeed, take any $t_{1}, t_{2} \in [0, b]$. 
Without loss of generality, 
we may assume $G(t_{1}) \ge G(t_{2})$. 
By (\ref{5.1-3.1.1}), 
\begin{align}\label{5.1-4.1}
G(t_{1}) - G(t_{2}) 
&= G(t_{1}) - F(t_{2}, v_{2})  \notag\\[2mm]
&\le F(t_{1}, v_{2}) - F(t_{2}, v_{2})  
\le C_{0}|t_{1} - t_{2}|. \notag
\end{align}
Here, $v_{2} \in \Sph^{n - 1}_{p}$ is a point satisfying 
$G(t_{2}) = F(t_{2}, v_{2})$. 
Thus, we have proved that $G(t)$ is locally Lipschitz on $[0, \infty)$. 
Therefore, it follows from (\ref{5.1-3}) and (\ref{5.1-4}) that 
\[
K_M(\sigma_{t}) \ge G(t)
\]
holds.
$\qedd$
\end{proof}

\subsection{Proof of Partial Answer to Milnor's Open Conjecture}\label{PAMOC}

Before starting the proof of our answer to Milnor's open conjecture, 
we will recall the following differential inequality (compare \cite[Lemma 2.1]{Z}). 

\begin{lemma}\label{DIL}
Let $\phi(t)$ be a $C^{1}$-function on $[0, \infty)$, 
and $\lambda (t)$ a continuous function on $[0, \infty)$. 
If 
\[
\phi'(t) \le \lambda (t) \phi(t)
\]
holds on $[0, \infty)$, 
then we have 
\[
\phi (t) \le e^{\Lambda (t)} \phi (0).
\]
Here, we set 
\[
{\Lambda (t) := \int_{0}^{t}\lambda (s)ds}_.
\]
\end{lemma}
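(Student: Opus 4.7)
The plan is to prove this by the standard integrating-factor technique for Grönwall-type differential inequalities. The key observation is that, since $\Lambda(t)$ is $C^{1}$ with $\Lambda'(t) = \lambda(t)$, the function $e^{-\Lambda(t)}$ is a natural integrating factor that turns the given inequality into a monotonicity statement.

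First, I would define the auxiliary function $\psi(t) := e^{-\Lambda(t)}\phi(t)$ on $[0,\infty)$. This is a $C^{1}$-function because both $\phi(t)$ and $\Lambda(t)$ are $C^{1}$ (the latter as the indefinite integral of a continuous function). Then I would compute its derivative directly:
\[
\psi'(t) = e^{-\Lambda(t)}\phi'(t) - \lambda(t)\,e^{-\Lambda(t)}\phi(t) = e^{-\Lambda(t)}\bigl(\phi'(t) - \lambda(t)\phi(t)\bigr).
\]
By the hypothesis $\phi'(t) \le \lambda(t)\phi(t)$ and the positivity of $e^{-\Lambda(t)}$, we obtain $\psi'(t) \le 0$ on $[0,\infty)$. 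Hence $\psi$ is non-increasing on $[0,\infty)$, which yields $\psi(t) \le \psi(0) = \phi(0)$ for all $t \ge 0$.

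Finally, substituting back the definition of $\psi$ gives $e^{-\Lambda(t)}\phi(t) \le \phi(0)$, and multiplying both sides by the positive quantity $e^{\Lambda(t)}$ yields the desired inequality $\phi(t) \le e^{\Lambda(t)}\phi(0)$. There is no real obstacle here; the entire argument rests on recognizing the correct integrating factor, after which the steps are immediate from the product rule and the sign hypothesis. The only minor point to note is that the inequality is preserved in both directions of the manipulation because $e^{\pm\Lambda(t)} > 0$, so no sign subtleties arise.
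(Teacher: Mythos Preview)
Your proof is correct and follows essentially the same integrating-factor approach as the paper: the paper also computes $\frac{d}{dt}\bigl(e^{-\Lambda(t)}\phi(t)\bigr)\le 0$ and then integrates from $0$ to $t$, whereas you phrase the same step as monotonicity of $\psi(t):=e^{-\Lambda(t)}\phi(t)$. The arguments are identical in substance.
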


\begin{proof}
Since 
$0 \ge \phi'(t) - \lambda (t) \phi(t)$ on $[0, \infty)$ 
by the assumption on this lemma, 
we see 
\begin{equation}\label{DIL-1}
0 \ge e^{- \Lambda (t)}\phi'(t) - e^{- \Lambda (t)}\lambda (t) \phi(t) 
= \frac{d}{dt} \left( e^{- \Lambda (t)}\phi (t) \right)
\end{equation}
for all $t \in [0, \infty)$. 
Thus, by this (\ref{DIL-1}), we have 
\begin{align}\label{DIL-2}
0  = \int_{0}^{t} 0\,ds 
&\ge \int_{0}^{t} \frac{d}{ds} \left( e^{- \Lambda (s)}\phi (s) \right) ds \notag\\[2mm]
&= e^{- \Lambda (t)}\phi (t) - e^{- \Lambda (0)}\phi (0) \notag\\[2mm]
&= e^{- \Lambda (t)}\phi (t) - \phi (0)
\end{align}
for all $t \in [0, \infty)$. 
Therefore, by this (\ref{DIL-2}), we get 
\[
e^{- \Lambda (t)}\phi (t) \le \phi (0), 
\]
that is, 
$
\phi (t) \le e^{\Lambda (t)}\phi (0)
$ 
holds on $[0, \infty)$. 
$\qedd$
\end{proof}

\bigskip

Now, let $G(t)$ be the Lipschitz function in Model Lemma (Lemma \ref{lem5.1}), and set 
\[
G^{*}(t) := \min \left\{ 0, G(t) \right\}_.
\]
Consider a non-compact model surface of revolution $(M^{*}, p^{*})$ 
with its metric 
\begin{equation}\label{model-metric}
g^{*} = dt^2 +  m(t)^2d \theta^2, \quad (t,\theta) \in (0,\infty) \times \Sph_{p^{*}}^1
\end{equation}
satisfying the differential equation
$m''(t) + G^{*} (t) m(t) = 0$
with initial conditions $m(0) = 0$ and $m'(0) = 1$. 
Remark that the metric (\ref{model-metric}) is not always differentiable around 
the base point $p^{*} \in M^{*}$. 

\begin{theorem}\label{thm5.3}
Let $M$ be a complete open Riemannian $n$-manifold, $p \in M$ {\bf any} fixed point, 
and $(M^{*}, p^{*})$ a comparison model surface of revolution, constructed from $(M, p)$, 
with its metric (\ref{model-metric}). 
If $G^{*} (t)$ satisfies 
\begin{equation}\label{thm5.3-0}
\int^{\infty}_{0} \left( - t \cdot G^{*} (t) \right) dt < \infty, 
\end{equation}
then the total curvature $c(M^{*})$ is finite. 
In particular, then $M$ has a finitely generated fundamental group. 
\end{theorem}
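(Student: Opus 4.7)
My plan is to reduce the finiteness of $c(M^{*})$ to the integrability hypothesis (\ref{thm5.3-0}) by controlling the warping function $m(t)$, and then feed the conclusion into Sector Theorem and Main Theorem. Since $m(t) > 0$ for $t > 0$, the total curvature is
\[
c(M^{*}) \;=\; \int_{M^{*}} G^{*}\,dM^{*} \;=\; 2\pi \int_{0}^{\infty} G^{*}(t)\, m(t)\,dt,
\]
and since $G^{*} \le 0$ everywhere, the finiteness of $c(M^{*})$ is equivalent to $\int_{0}^{\infty}\bigl(-G^{*}(t)\bigr)m(t)\,dt < \infty$. Thus the real task is to bound $m(t)$ linearly in $t$.

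First I would observe that $m$ is convex on $[0,\infty)$. Indeed, $m''(t) = -G^{*}(t)\,m(t) \ge 0$ since $G^{*} \le 0$ and $m \ge 0$. Because $m(0) = 0$ and $m'$ is non-decreasing, the mean value theorem gives $m(t) = t\, m'(\xi_{t}) \le t\, m'(t)$ for all $t > 0$. Substituting back into the ODE yields the key differential inequality
\[
m''(t) \;=\; -G^{*}(t)\, m(t) \;\le\; \bigl(-t\, G^{*}(t)\bigr)\, m'(t).
\]
Setting $\phi(t) := m'(t)$ and $\lambda(t) := -t\, G^{*}(t) \ge 0$, this is exactly $\phi'(t) \le \lambda(t)\phi(t)$, so Differential Inequality Lemma (Lemma \ref{DIL}) applies and gives
\[
m'(t) \;\le\; e^{\Lambda(t)}\, m'(0) \;=\; e^{\Lambda(t)}, \qquad \Lambda(t) := \int_{0}^{t}\bigl(-s\, G^{*}(s)\bigr)\,ds.
\]
By assumption (\ref{thm5.3-0}), $C := \Lambda(\infty) < \infty$, whence $m'(t) \le e^{C}$ for all $t$, and therefore $m(t) \le e^{C}\, t$ for all $t \ge 0$.

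With this linear upper bound in hand,
\[
\int_{0}^{\infty}\bigl(-G^{*}(t)\bigr) m(t)\,dt \;\le\; e^{C}\int_{0}^{\infty}\bigl(-t\, G^{*}(t)\bigr)\,dt \;<\; \infty,
\]
which proves $c(M^{*}) > -\infty$. Now, because $G^{*}(t) \le 0$ on $[0,\infty)$, the model $(M^{*}, p^{*})$ trivially satisfies condition (CH) with any $R_{0} > 0$. Hence Sector Theorem (Theorem \ref{thm3.6}) supplies some $\delta_{0} \in (0,\pi)$ such that the sector $\wt{V}(\delta_{0}) \subset M^{*}$ contains no pair of cut points. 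Since the radial curvature of $M$ at $p$ is bounded below by $G(t) \ge G^{*}(t)$, it is bounded below by the radial curvature of $(M^{*},p^{*})$, and Main Theorem (Theorem \ref{thm2.1}) applies to yield that $M$ has finite topological type. Finite topological type means $M$ is homeomorphic to the interior of a compact manifold with boundary, which has finitely generated fundamental group, so $\pi_{1}(M)$ is finitely generated.

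The only delicate point I anticipate is the possible lack of smoothness of $g^{*}$ at $p^{*}$ (since $G^{*}$ is merely locally Lipschitz, $m$ need not be smooth at the origin and the metric need not be a genuine smooth surface of revolution there). However, the Clairaut relation, Gauss--Bonnet, and the Rauch-type comparison arguments invoked in Sections \ref{sec:cut} and \ref{sec:TCT} only require $C^{1}$-regularity of the metric away from $p^{*}$ and continuity of the curvature, so the reference model $(M^{*},p^{*})$ is admissible for both Sector Theorem and Main Theorem. The rest is a direct application of those two results.
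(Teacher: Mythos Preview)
Your proof is correct and follows essentially the same route as the paper: both arguments establish the differential inequality $m''(t)\le(-tG^{*}(t))\,m'(t)$ from the convexity of $m$ and the estimate $m(t)\le t\,m'(t)$, then apply Lemma~\ref{DIL} to bound $m'(t)\le e^{\Lambda(t)}$, and conclude via Sector Theorem and Main Theorem. The only cosmetic difference is the final bookkeeping for $c(M^{*})$: the paper writes $c(M^{*})=2\pi\int_{0}^{\infty}(-m''(t))\,dt=2\pi(1-\lim_{t\to\infty}m'(t))$ and invokes the bound on $m'$ directly, whereas you integrate $m'$ once more to get $m(t)\le e^{C}t$ and then bound $\int(-G^{*})m\,dt$; since $-m''=G^{*}m$, these are the same computation.
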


\begin{proof}
Since $G^{*}(t) \le 0$ on $[0, \infty)$, 
we have 
\[
m'' (t) = - G^{*} (t)m(t) \ge 0
\]
on $[0, \infty)$, that is, 
$m'(t)$ is increasing on $[0, \infty)$. 
Since $m'(0) = 1$, 
\[
m' (t) \ge m'(0) = 1
\]
holds for all $t \in [0, \infty)$. 
In particular, $m(t)$ is non-negative for all $t \ge 0$, 
for $m(t)$ is increasing on $[0, \infty)$ and $m(0) = 0$. 
We will first prove the total curvature $c(M^{*})$ of $(M^{*}, p^{*})$ is finite. 
To show this fact, 
we define 
\[
\phi (t) := m'(t)
\]
on $[0, \infty)$. 
Then, since $m(0) = 0$ and $m'(t)$ is increasing on $[0, \infty)$, 
we see that 
\begin{equation}\label{5.2-2}
\frac{m(t)}{t} = \frac{1}{t} \int_{0}^{t} m'(s) ds \le \frac{m'(t)}{t} \int_{0}^{t} ds = \phi (t)
\end{equation}
on $(0, \infty)$. 
Thus, by (\ref{5.2-2}), we have 
\begin{align}
\phi' (t) = m'' (t) 
&= - G^{*}(t)m(t)\notag \\[2mm]
&= \left( - t\,G^{*}(t) \right) \cdot \frac{m(t)}{t}\notag \\[2mm]
&\le - t\,G^{*}(t) \phi (t)\notag \\[2mm] 
&= \lambda (t)\phi (t)\notag
\end{align}
for all $t \in (0, \infty)$. 
Here, we set 
$\lambda (t) := - t\,G^{*}(t)\,(\ge 0)$.
In particular, 
\[
\phi' (t) \le \lambda (t)\phi (t)
\]
holds on $[0, \infty)$, 
since $G^{*} (t)$ is continuous on $[0, \infty)$.
Then, it follows from Lemma \ref{DIL} and $m'(0) = 1$ that we have 
\begin{equation}\label{5.2-3}
\phi (t) \le e^{\Lambda (t)} \phi (0) = e^{\Lambda (t)} m' (0) = e^{\Lambda (t)}
\end{equation}
on $[0, \infty)$. 
Here, we set 
\[
{\Lambda (t) := \int_{0}^{t}\lambda (s)ds}_.
\]
Since 
\[
0 \le \Lambda (\infty) 
= \int_{0}^{\infty}\lambda (s)ds
= \int^{\infty}_{0} \left( - s \cdot G^{*}(s) \right) ds < \infty
\]
by the assumption (\ref{thm5.3-0}),
we see, by (\ref{5.2-3}), 
\begin{equation}\label{5.2-4}
{\lim_{t \to \infty} m'(t) 
= \lim_{t \to \infty} \phi (t) 
\le \lim_{t \to \infty} e^{\Lambda (t)} 
= e^{\Lambda (\infty)} < \infty
}_.
\end{equation}
By (\ref{5.2-4}), we get 
\[
{c(M^{*}) = 2 \pi \int_{0}^{\infty} \left( - m''(t) \right) dt = 2 \pi \left( 1 - \lim_{t \to \infty} m'(t) \right) > - \infty}_.
\]
Thus, $c(M^{*})$ is finite.\par
Therefore, 
it follows from Sector Theorem and Main Theorem that $M$ has finite topological type, 
that is, $M$ is homeomorphic to the interior of a compact manifold with boundary. 
In particular, $M$ therefore has a finitely generated fundamental group. 
$\qedd$
\end{proof}

\bigskip

Finally, we prove a corollary to 
our Main Theorem, which is another partial answer to Milnor's open conjecture.

\medskip

Let $K_{M}$ be the sectional curvature of an arbitrary complete Riemannian $n$-manifold $M$. 
Furthermore, we define a (real) number $K_{M}(q)$ as follows
\[
{K_{M} (q) := \min_{\sigma \subset T_{q}M} K_{M}(\sigma)}_.
\]
Here, $\sigma \subset T_{q}M$ is a $2$-dimensional linear space, 
and $K_{M}(\sigma)$ is the sectional curvature of $\sigma$ at $q \in M$.

\begin{corollary}\label{cor5.4}
Let $M$ be an arbitrary complete open Riemannian $n$-manifold, $p \in M$ {\bf any} fixed point, 
and $(M^{*}, p^{*})$ a comparison model surface of revolution, constructed from $(M, p)$, 
with its metric (\ref{model-metric}). 
If the sectional curvature $K_{M}$ of $M$ satisfies 
\begin{equation}\label{cor5.4-0}
\liminf_{t \to \infty} t^{2 + \alpha} \min_{q \in B_{t}(p)} K_{M}(q) > \cN
\end{equation}
for some numbers $\alpha > 0$ and $\cN \le 0$, 
then $M$ has a finitely generated fundamental group. 
\end{corollary}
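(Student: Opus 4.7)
The strategy is to reduce Corollary \ref{cor5.4} to Theorem \ref{thm5.3}: I only need to check that the function $G^{*}$ built from $(M, p)$ in Model Lemma satisfies the finiteness condition
\[
\int_{0}^{\infty} \left( -t \cdot G^{*}(t) \right) dt < \infty,
\]
since Theorem \ref{thm5.3} will then immediately finish the proof.

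First, I would unwind the construction in the proof of Lemma \ref{lem5.1} to establish the pointwise comparison
\[
G(t) \ge \min_{q \in \overline{B_{t}(p)}} K_{M}(q).
\]
Indeed, $F_{0}(s, v)$ is by definition the minimum of sectional curvatures over all radial $2$-planes at $\gamma_{v}(s)$, hence $F_{0}(s, v) \ge K_{M}(\gamma_{v}(s))$ because $K_{M}(q)$ is the infimum over \emph{all} $2$-planes at $q$. Since $\rho_{t}(v) \le t$, the point $\gamma_{v}(\rho_{t}(v))$ lies in $\overline{B_{t}(p)}$, and taking the infimum over $v \in \Sph^{n-1}_{p}$ yields the claim. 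Next I would use hypothesis (\ref{cor5.4-0}) to produce constants $C > 0$ and $T_{0} > 0$ such that
\[
\min_{q \in B_{t}(p)} K_{M}(q) \ge -\frac{C}{t^{2+\alpha}} \quad \text{for all } t \ge T_{0};
\]
combining with the previous inequality (after noting $\overline{B_{t}(p)} \subset B_{t'}(p)$ for every $t' > t$ and letting $t' \to t^{+}$) gives $G(t) \ge -C/t^{2+\alpha}$, hence
\[
G^{*}(t) = \min \{0, G(t)\} \ge -\frac{C}{t^{2+\alpha}} \quad \text{for all } t \ge T_{0}.
\]

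Finally, on $[T_{0}, \infty)$ we have $-t \cdot G^{*}(t) \le C/t^{1+\alpha}$, which is integrable because $\alpha > 0$; on $[0, T_{0}]$ the locally Lipschitz function $G^{*}$ is bounded, so the integrand is bounded and integrable there as well. Therefore the hypothesis of Theorem \ref{thm5.3} is verified, which delivers Corollary \ref{cor5.4}. The main technical point is the bookkeeping inequality $G(t) \ge \min_{q \in \overline{B_{t}(p)}} K_{M}(q)$, which must be extracted by peeling back the layered chain from Lemma \ref{lem5.1} (radial sectional curvatures $\to F_{0} \to F$ after the clipping by $\rho_{t}(v) \to G$); once this is done, the remaining estimates are routine.
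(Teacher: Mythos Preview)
Your proposal is correct and follows essentially the same route as the paper: verify the integrability condition $\int_{0}^{\infty}(-t\,G^{*}(t))\,dt<\infty$ from the curvature decay hypothesis and then invoke Theorem~\ref{thm5.3} (the paper phrases the final step as an appeal to Sector Theorem and Main Theorem, which is the content of Theorem~\ref{thm5.3}). Your write-up is in fact a bit more careful than the paper's, since you explicitly justify the pointwise comparison $G(t)\ge \min_{q\in\overline{B_{t}(p)}}K_{M}(q)$ by tracing through the layers $F_{0}\to F\to G$ of Lemma~\ref{lem5.1} and you address the open-versus-closed ball discrepancy; the paper simply asserts the inequality ``from the construction of $G(t)$''.
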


\begin{proof}
By the assumption (\ref{cor5.4-0}), we have 
\begin{equation}\label{cor5.4-1}
{t^{2 + \alpha} \min_{q \in B_{t}(p)} K_{M}(q) > C \cdot \cN}_,
\end{equation}
for some $C \ge 1$ and all $t > 1$. 
Then, it follows from the construction of $G(t)$ (see the proof of Lemma \ref{lem5.1}) 
and (\ref{cor5.4-1}) that 
\begin{equation}\label{cor5.4-2}
{G(t) \ge \min_{q \in B_{t}(p)} K_{M}(q) > \frac{C \cdot \cN}{t^{2 + \alpha}}}_.
\end{equation}
holds for all $t > 1$. 
Since $G^{*}(t) \le 0$ on $[0, \infty)$, we see, by (\ref{cor5.4-2}), 
\begin{equation}\label{cor5.4-3}
{0 \ge \int_{1}^{\infty} t \cdot G^{*}(t)\,dt 
\ge \int_{1}^{\infty} \frac{C \cdot \cN}{t^{1 + \alpha}} dt 
= \left[ -\,\frac{C \cdot\cN}{\alpha t^{\alpha}} \right]_{1}^{\infty} 
= \frac{C \cdot\cN}{\alpha} 
> - \infty}_.
\end{equation}
Hence, we get, by (\ref{cor5.4-3}), 
\[
{0 \le \int_{0}^{\infty} -\,t \cdot G^{*}(t)\,dt < \infty}_.
\]
Therefore, by Sector Theorem and Main Theorem, 
$M$ has a finitely generated fundamental group. 
$\qedd$
\end{proof}

\begin{remark}
Under the assumption in Theorem \ref{thm5.3}, or Corollary \ref{cor5.4}, 
it follows from \cite[Theorem 0.1]{MNO} 
that $(M, p)$ admits the asymptotic cone via rescaling argument, i.e., 
the pointed Gromov--Hausdorff limit space of $((1/t)M, p)$ exists as $t \to \infty$, and 
the space is, naturally, isometric to a Euclidean cone (see \cite[Definition 3.14]{G2} for a definition of the pointed Gromov--Hausdorff convergence). However, one should notice again that 
our models in Theorem \ref{thm5.3} and Corollary \ref{cor5.4} have been constructed from 
any complete open Riemannian manifold with an arbitrary given point as a base point, and that 
the metrics (\ref{model-metric}) in Theorem \ref{thm5.3} and Corollary \ref{cor5.4} are not always differentiable around their base points. In particular, 
our Main Theorem has a wider class of metrics than those described in \cite[Theorem 0.1]{MNO}. 
\end{remark}

\bigskip
The next example shows that 
non-negative radial curvature {\em does not always mean} non-negative sectional curvature. 

\begin{example}\label{exa1.2}
Let $M$ be a 2-sphere of 
revolution with a Riemannian metric 
\begin{equation}\label{1.2}
h := dr^2 + m(r)^2d \theta^{2}, \quad (r,\theta) \in (0,2a) \times {\Sph_{p}^1}_. 
\end{equation}
and pair of poles $p, q$, i.e., the surface $(M, h)$ satisfies that 
\begin{enumerate}[{\rm ({TS--}1)}]
\item
$(M, h)$ is symmetric with respect to the reflection fixing $r = a$, where $2a = d(p, q)$, 
\item
the radial curvature function $G \circ \gamma : [0, 2a] \lra \R$ of $M$ is monotonic along a meridian $\gamma$ emanating from 
$p = \gamma (0)$ to the point on $r = a$.
\end{enumerate} 
Remark that $(M, h)$ does not always have positive radial curvature function $G \circ \gamma (t)$\,: 
For example, the model surface of revolution generated by the $(x, z)$-plane curve $(m(r), 0, z(r))$ 
satisfies (TS--1) and (TS--2), where 
\[
m(r) := \frac{\sqrt{3}}{10} 
\left(
9 \sin \frac{\sqrt{3}}{9} r + 7 \sin \frac{\sqrt{3}}{3} r
\right)_, 
\qquad 
{z(r) := \int_{0}^{r} \sqrt{1 - m'(t)^{2}} dt}_.
\]
In particular, we then see $G(\gamma(3\sqrt{3}\pi / 2)) = -1$ (see \cite{SiT2}).\par
Thus, without loss of generality, by setting $2a := \pi$, 
we may assume that (\ref{1.2}) is the geodesic polar coordinates 
around the north pole $(0, 0, 1)$ of the unit sphere $\Sph^{2}(1)$ 
in $3$-dimensional Euclidean space $\R^3$, 
and that the radial curvature function $G \circ \gamma (t)$ of $(M, h)$ 
is {\em negative} at a point on $(0, \pi)$. 
Now, we will introduce a new Riemannian metric $g$ 
on $3$-dimensional Euclidean space $(\R^{3}, g_{0})$. 
Outside of the unit ball $B_{1}(o) \subset \R^{3}$ centered at the origin $o \in \R^{3}$, 
we define $g$ to be 
\[
g := dt^2 + f(t)^2 h,
\]
where $f : (0, \infty) \lra (0, \infty)$ is a smooth function, 
and the function $t$ denotes the Euclidean distance function from $o \in \R^{3}$. 
We set $x_{1} := t$, $x_{2} := r$, and $x_{3} := \theta$, 
and denote by $\sigma_{ij}$ a $2$-dimensional linear plane spanned 
by $\partial / \partial x_{i}$ and $\partial / \partial x_{j}$, $i \not= j$. 
Then, the sectional curvatures $K(\sigma_{ij})$ of the planes $\sigma_{ij}$ at each points on 
$\R^{3} \setminus B_{1}(o)$
are 
\begin{equation}\label{1.3}
K(\sigma_{12}) = K(\sigma_{13}) = -\,\frac{f''(t)}{f(t)}
\end{equation}
and 
\begin{equation}\label{1.4}
K(\sigma_{23}) = \frac{1}{f(t)^2}\left(-\,\frac{m''(r)}{m(r)} - f'(t)^{2} \right)_.
\end{equation}
Consider a smooth family $\{ h_{t} \}_{t \ge 0}$ of Riemannian metrics such that 
$h_{t}$ is the standard metric on $\Sph^{2}(1)$ for small $t$ and $h_{t} = h$ for $t \ge 1$. 
Then, $(t, r, \theta)$ are the geodesic polar coordinates 
around $o \in \R^{3}$ for $(\R^{3}, g_{0})$, 
and the Riemannian metric $g_{t} := dt^2 + f(t)^2 h_{t}$ on $\R^{3}$ 
equals $g_{0}$ for small $t$ if $f(t) = t$, 
and equals $g$ on $\R^{3} \setminus B_{1}(o)$. 
By the definition of $g_{t}$, each $t$-curve on $(\R^{3}, g_{t})$ is a ray emanating from $o \in \R^{3}$. 
Therefore, 
it follows from (\ref{1.3}) and (\ref{1.4}) that 
the radial curvature of $(\R^{3}, g)$ is non-negative on $\R^{3} \setminus B_{1}(o)$, if $f''(t) \le 0$. 
In particular, $(\R^{3}, g)$ admits non-negative Ricci curvature at divergent points.
\end{example}

\medskip

From Example \ref{exa1.2}, 
we may conclude Milnor's open conjecture does not follow from our Main Theorem, and, 
at the same time, {\em our Main Theorem does not follow from Milnor's open conjecture}. 
Moreover, we find that the radial curvature geometry deals with a geometry different
from the geometry of a global lower bound on Ricci curvature, 
but with a sufficiently large geometry.

\medskip

\begin{center}
Kei KONDO $\cdot$ Minoru TANAKA 

\bigskip
Department of Mathematics\\
Tokai University\\
Hiratsuka City, Kanagawa Pref.\\ 
259\,--\,1292 Japan

\bigskip
$\bullet$\,our e-mail addresses\,$\bullet$

\bigskip 
\textit{e-mail of Kondo} \,:

\medskip
{\tt keikondo@keyaki.cc.u-tokai.ac.jp}

\medskip
\textit{e-mail of Tanaka}\,:

\medskip
{\tt m-tanaka@sm.u-tokai.ac.jp}
\end{center}

\end{document}